\verbdef{\vtext}{rdrobust}
\newtheorem{theorem}{Theorem}[section]
\newtheorem{proposition}{Proposition}[section]
\newtheorem{lemma}{Lemma}[section]
\newtheorem{remark}{Remark}[section]
\newtheorem{assumption}{Assumption}[section]
\newtheorem*{assumption*}{Assumption}
\newcommand{\R}{\ensuremath{\mathbf{R}}}
\newcommand{\E}{\ensuremath{\mathbb{E}}}
\newcommand{\dx}{\ensuremath{\mathrm{d}}}
\newcommand{\Var}{\ensuremath{\mathrm{Var}}}
\newcommand{\Cov}{\ensuremath{\mathrm{Cov}}}
\newcommand{\cw}{\ensuremath{\xrightarrow{\mathrm{d}}}}
\newcommand{\si}{\perp \! \! \! \perp}
\DeclareMathOperator*{\argmax}{arg\,max}
\newcommand{\one}{\ensuremath{\mathds{1}}}
\begin{document}

\title{\bf Isotonic Regression Discontinuity Designs}
\author{
	Andrii Babii\footnote{Department of Economics, University of North Carolina at Chapel Hill - Gardner Hall, CB 3305 Chapel Hill, NC
		27599-3305. Corresponding author. Email: \href{mailto:babii.andrii@gmail.com}{babii.andrii@gmail.com}.} \\
	\textit{\normalsize UNC Chapel Hill}  \and Rohit Kumar\footnote{Indian Statistical Institute, Delhi, 7 S.J.S. Sansanwal Marg, ISI Delhi Centre, New Delhi, Delhi 110016.} \\ \textit{\normalsize ISI, Delhi}
}
\maketitle

\begin{abstract}
	This paper studies the estimation and inference for the isotonic regression at the boundary point, an object that is particularly interesting and required in the analysis of monotone regression discontinuity designs. We show that the isotonic regression is inconsistent in this setting and derive the asymptotic distributions of boundary corrected estimators. Interestingly, the boundary corrected estimators can be bootstrapped without subsampling or additional nonparametric smoothing which is not the case for the interior point. The Monte Carlo experiments indicate that shape restrictions can improve dramatically the finite-sample performance of unrestricted estimators. Lastly, we apply the isotonic regression discontinuity designs to estimate the causal effect of incumbency in the U.S. House elections.
\end{abstract}

\begin{keywords}
	regression discontinuity designs, shape constraints, monotonicity, isotonic regression, boundary point, wild bootstrap.
\end{keywords}

\begin{jel}
	\; C14, C31.
\end{jel}

\section{Introduction}
Regression discontinuity designs, see \cite{thistlethwaite1960regression}, are widely recognized as one of the most credible quasi-experimental strategies for identifying and estimating causal effects. In a nutshell, such designs exploit the discontinuity in the treatment assignment probability around the cutoff value of some running variable. The discontinuous treatment assignment probability frequently occurs due to laws and regulations governing economic and political life. A comprehensive list of empirical applications using regression discontinuity designs can be found in \cite{lee2010regression}; see also \cite{imbens2008regression}, and \cite{cattaneo2019practical} for the methodological review and \cite{imbens2009recent}, \cite{abadie2018econometric} for their place in the program evaluation literature. On the methodological side, in the seminal paper \cite{hahn2001identification} translate regression discontinuity designs into the potential outcomes framework and introduce the local polynomial nonparametric estimator to estimate causal effects in sharp and fuzzy designs.

Regression discontinuity designs encountered in the empirical practice are frequently monotone. Indeed, development and educational programs are often prescribed based on poverty or achievement scores that are monotonically related to average outcomes. For instance, when evaluating the effect of subsidies for fertilizers on yields, the yield per acre is expected to be non-decreasing in the size of the farm due to the increasing returns to scale. Alternatively, when evaluating the effectiveness of the cash transfers program on the households food expenditures, we expect that more affluent households spend, on average, more on food, since food is a normal good; see Section \ref{sec:appendix_literature} in the Supplementary Material for a sample of other examples drawn from the empirical research.

Despite this prevalence in the empirical practice, little is known about how monotonicity can be incorporated in the estimation procedure.\footnote{Shape restrictions in regression discontinuity designs is a relatively unexplored area. To the best of our knowledge, \cite{armstrong2015adaptive} is the only existing work focusing on the k-nearest neighbors estimator.}  Estimation and inference in regression discontinuity designs require estimating the conditional mean functions at the boundary of the support. In this paper, we focus on the isotonic regression estimator and establish its formal statistical properties for the boundary point.\footnote{This estimator is relatively unknown in econometrics and to the best of our knowledge has not been previously considered in the RDD setting; see an excellent review paper \cite{chetverikov2018econometrics} for a comprehensive discussion of this estimator and further references on the econometrics of shape restrictions.} The isotonic regression estimator originates from the work of \cite{ayer1955empirical}, and \cite{brunk1956inequality}. \cite{brunk1970estimation} derives its asymptotic distribution at the \textit{interior point} under restrictive assumptions that the regressor is deterministic and regression errors are homoskedastic. His treatment builds upon the ideas of \cite{rao1969estimation}, who derived the asymptotic distribution of the monotone density estimator, also known as the Grenander estimator; see \cite{grenander1956theory}. \cite{wright1981asymptotic} provides the final characterization of the large sample distribution for the interior point when the regressor is random and regression errors are heteroskedastic.

However, to the best of our knowledge, little is known about the behavior of the isotonic regression at the \textit{boundary point}, which is a building block of our isotonic regression discontinuity (iRDD) estimators. This situation contrasts strikingly with the local polynomial estimator, whose boundary behavior is well-understood; see \cite{fan1992variable}. Most of the existing results for isotonic estimators at the boundary are available only for the Grenander estimator; see \cite{woodroofe1993penalized}, \cite{kulikov2006behavior}, and \cite{balabdaoui2011grenander}. More precisely, we know that the Grenander estimator is inconsistent at the boundary of the support and that the consistent estimator can be obtained with additional boundary correction or penalization. At the same time, some isotonic estimators, e.g., in the current status model, are consistent at the boundary without corrections; see \cite{durot2018limit}. \cite{anevski2002monotone} discuss the inconsistency of the isotonic regression at the discontinuity point with deterministic equally spaced covariate and homoskedasticity. However, \cite{anevski2002monotone} do not discuss whether the isotonic regression with random covariate is consistent at the boundary of its support and do not provide a consistent estimator even in the restrictive equally spaced fixed design case.

In this paper, we aim to understand the behavior of the isotonic regression with a random regressor at the boundary of its support. We show formally that the isotonic regression estimator is inconsistent and tends to underestimate the value of the regression at the left boundary. The inconsistency is related to the extreme-value behavior of the closest to the boundary observation. We introduce boundary-corrected estimators and derive large sample approximations to corresponding distributions. The major technical difficulty when deriving asymptotic distributions in this setting is to establish the tightness of the maximizer of a certain empirical process. This condition is typically needed to apply the argmax continuous mapping theorem of \cite{kim1990cube}. The difficulty stems from the fact that conventional tightness results of \cite{kim1990cube} and \cite{van2000weak} do not always apply. For the Grenander estimator, \cite{kulikov2006behavior} suggest a solution to this problem based on the Koml\'{o}s-Major-Tusn\'{a}dy strong approximation. In our setting, this approach entails the strong approximation to the general empirical process, see \cite{koltchinskii1994komlos} and \cite{chernozhukov2015constrained}, which is more problematic to apply due to slower convergence rates. Consequently, we provide the alternative generic proof which does not rely on the strong approximation and which might be applied to other boundary-corrected shape-constrained estimators.

Since the asymptotic distribution is not pivotal, we introduce a novel trimmed wild bootstrap procedure and establish its consistency. The procedure consists of trimming values of the estimated regression function that are very close to the boundary when simulating wild bootstrap samples. Somewhat unexpectedly, we discover that the  trimming and the appropriate boundary correction restores the consistency of the wild bootstrap without additional nonparametric smoothing or subsampling, which is typically needed in such settings.\footnote{In contrast, the bootstrap typically fails at the interior point; see \cite{kosorok2008bootstrapping}, \cite{sen2010inconsistency}, \cite{guntuboyina2018nonparametric}, and \cite{patra2018consistent} for the discussion of this problem and various case-specific remedies, and \cite{cattaneo2017bootstrap} for generic solutions that apply to a class of cube-root consistent estimators.}

The paper is organized as follows. In Section~\ref{sec:isotonic_regression}, we develop the asymptotic theory for the isotonic regression estimator at the boundary point. The reader interested more in the regression discontinuity designs can skip this section and go directly to Section~\ref{sec:irdd}, where we discuss isotonic sharp and fuzzy designs. In Section~\ref{sec:mc_experiments}, we study the finite sample performance of the iRDD estimator with Monte Carlo experiments. Section~\ref{sec:empirical_application} estimates the effect of incumbency using the sharp iRDD on the data of \cite{lee2008randomized}. Section~\ref{sec:conclusion} concludes. Additional results, proofs, and Monte Carlo experiments appear in the Appendix and the Supplementary Material.

\section{Isotonic regression at the boundary}\label{sec:isotonic_regression}
In this section, we study the isotonic regression estimator at the boundary point. Readers interested in regression discontinuity designs only may skip this section and go directly to Section~\ref{sec:irdd}.

\subsection{Estimator}
We focus on the generic nonparametric regression
\begin{equation*}
Y = m(X)+\varepsilon,\qquad \E[\varepsilon|X]=0,
\end{equation*}
where the conditional mean function $m(x)=\E[Y|X=x]$ is assumed to be montone. For simplicity of presentation, we normalize the support to $[0,1]$ and assume that $m$ belongs to the set of non-decreasing functions on $[0,1]$
\begin{equation*}
	\mathcal{M}[0,1] = \left\{\phi:[0,1]\to\R:\; \phi(x_1)\leq \phi(x_2),\; \forall x_1\leq x_2 \right\}.
\end{equation*}

For a sample $(Y_i,X_i)_{i=1}^n$, the isotonic regression estimator, denoted $\hat m$, solves the constrained least-squares problem
\begin{equation*}
	\min_{\phi\in\mathcal{M}[0,1]}\sum_{i=1}^n(Y_{i}-\phi(X_{i}))^2;
\end{equation*}
see \cite{brunk1970estimation}. Note that the estimator is uniquely determined at data points and is conventionally interpolated as a piecewise constant function elsewhere. Let $X_{(1)}<X_{(2)}<\dots <X_{(n)}$ be the ordered values of the covariate and let $(Y_{(1)},Y_{(2)}\dots,Y_{(n)})$ be the corresponding observations of the outcome variable. It is easy to see that the isotonic regression is alternatively characterized as a solution to
\begin{equation*}
	\min_{\phi_1\leq \phi_2\leq\dots\leq\phi_n}\sum_{i=1}^n(Y_{(i)} - \phi_i)^2.
\end{equation*}
While, the isotonic regression is a solution to the convex optimization problem and can be obtained with standard constrained convex optimization packages, an efficient way to compute the estimator is via the pool adjacent violators algorithm; see \cite{ayer1955empirical}. Although the isotonic regression features the number of estimated parameters of the same magnitude as the sample size, the pool adjacent violators algorithm is typically computationally cheaper than nonparametric kernel estimators, and its computational complexity is closer to that of the OLS estimator.

\subsection{Large sample distribution}
We are interested in estimating the value of the regression function at the boundaries of its support $[0,1]$. More precisely, we focus on the regression at the left boundary, denoted $m(0)=\lim_{x\downarrow0}m(x)$. The natural estimator of $m(0)$ is $\hat m(X_{(1)})$. Unfortunately, it follows from Theorem~\ref{thm:inconsistency} in the appendix that this estimator is inconsistent. The inconsistency occurs because $X_{(1)}$ converges to zero too fast according to laws of the extreme value theory. More precisely, the estimator $\hat m(X_{(1)})$ underestimates $m(0)$ and $\hat m(X_{(n)})$ overestimates $m(1)$ in small samples; see Section~\ref{sec:appendix_inconsistency} in the Appendix for more details.\footnote{We are grateful to Matt Masten for this observation. More generally, in the isotonic regression discontinuity design, the isotonic regression estimator underestimates the causal effect at the cutoff, which can be used as a tuning-free lower bound on the estimated causal effect.}

To estimate $m(0)$ consistently, we the consider boundary-corrected estimators $\hat m(cn^{-a})$ for some $c,a>0$ and study the large sample behavior of this estimator for a range of values $c,a>0$. The following assumption imposes a mild restriction on the distribution of the data.

\begin{assumption}\label{as:dgp}
	$(Y_i,X_i)_{i=1}^n$ is an i.i.d. sample of $(Y,X)$ such that (i) $\E[|\varepsilon|^{2+\delta}|X]\leq C<\infty$ for some $\delta>0$ and $m$ is uniformly bounded; (ii) the distribution of $X$ has a Lebesgue density $f$, uniformly bounded away from zero and infinity, and $f(0)=\lim_{x\downarrow 0}f(x)$ exists; (iii) the conditional variance $\sigma^2(x)=\Var(Y|X=x)$ is uniformly bounded and $\sigma^2(0)=\lim_{x\downarrow 0}\sigma^2(x)$ exists; (iv) $m$ is continuously differentiable in the neighborhood of zero with $m'(0)=\lim_{x\downarrow0}m'(x)>0$; (v) $m\in\mathcal{M}[0,1]$.
\end{assumption}

For a stochastic process $\{Z_t: t\in A\subset\R\}$, let $D^L_{A}(Z_t)(s)$ denote the left derivative of the greatest convex minorant at $s\in A$.\footnote{The greatest convex minorant of a function $g:A\to\R$ is defined as the maximal convex function $h$ such that $h(x)\leq g(x),\forall x\in A$.} We say that $m$ is $\gamma$-H\"{o}lder continuous in the neighborhood of zero if there exists a constant $C<\infty$ such that for all $x>0$ in this neighborhood
\begin{equation*}
|m(x) - m(0)|\leq C|x|^\gamma.
\end{equation*}
Our first result describes the large sample behavior of boundary corrected isotonic regression estimators for a range of boundary corrections.

\begin{theorem}\label{thm:isotonic_clt}
	Suppose that Assumption~\ref{as:dgp} is satisfied and that $c>0$. Then
	\begin{enumerate}
		\item[(i)] for $a\in(0,1/3)$
		\begin{equation*}
		n^{1/3}\left(\hat m\left(cn^{-a}\right) - m(cn^{-a})\right) \cw\left|\frac{4m'(0)\sigma^2(0)}{f(0)}\right|^{1/3}\argmax_{t\in\R}\{W_t-t^2\}.
		\end{equation*}
		\item[(ii)] for $a\in[1/3,1)$
		\begin{equation*}
		n^{(1-a)/2}\left(\hat m\left(cn^{-a}\right) - m(0)\right) \cw D^L_{[0,\infty)}\left(\sqrt{\frac{\sigma^2(0)}{cf(0)}}W_t + \frac{t^2c}{2}m'(0)\one_{a=1/3} \right)(1),
		\end{equation*}
		where $(W_t)_{t\in\R}$ is a two-sided Brownian motion,\footnote{The two-sided Brownian motion is defined as two independent Brownian motions originating from zero and moving in the opposite directions.} and for $a\in(1/3,1)$, we can replace Assumption~\ref{as:dgp} (iv) by the $\gamma$-H\"{o}der continuity with $\gamma>(1-a)/2a$.
	\end{enumerate}
\end{theorem}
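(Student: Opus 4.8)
\emph{Proof sketch.} The plan is to invert the isotonic estimator through the switching relation, reduce the statement to the limiting behaviour of the maximizer of a suitably localized partial-sum process, and then apply an argmax continuous mapping theorem.

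First, recall that $\hat m$ is the left derivative of the greatest convex minorant of the cumulative-sum diagram; this yields the switching relation: writing $\mathbb{G}_n(t)=n^{-1}\sum_{i=1}^n Y_i\one\{X_i\le t\}$ and $\mathbb{F}_n(t)=n^{-1}\sum_{i=1}^n\one\{X_i\le t\}$, for every $v\in\R$ the event $\{\hat m(x)\le v\}$ coincides, up to ties which are asymptotically negligible, with $\{\hat t_n(v)\ge x\}$, where $\hat t_n(v):=\argmax_{t\in[0,1]}\{v\mathbb{F}_n(t)-\mathbb{G}_n(t)\}$. I would use this to rewrite $\{n^{r}(\hat m(cn^{-a})-\mu_n)\le z\}$ as $\{\hat t_n(\mu_n+n^{-r}z)\ge cn^{-a}\}$, with $(r,\mu_n)=(1/3,\,m(cn^{-a}))$ for part (i) and $(r,\mu_n)=((1-a)/2,\,m(0))$ for part (ii). In part (i) the relevant resolution is $n^{-1/3}$ and I localize around the evaluation point by setting $t=cn^{-a}+n^{-1/3}s$ with $s\in\R$; since $a<1/3$ this point sits deep inside the support at scale $n^{-1/3}$, so the analysis is of interior type, paralleling \cite{wright1981asymptotic}. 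In part (ii) the relevant resolution is $n^{-a}$ and I localize around the boundary itself by setting $t=n^{-a}s$ with $s\ge0$, which is why the one-sided domain $[0,\infty)$ appears in the limit.

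Second, after subtracting the value at the localization centre, the localized process splits into a deterministic drift plus a centred empirical part. A second-order expansion of $\int_0^t\bigl(v-m(x)\bigr)f(x)\,\dx x$ around the localization centre, using Assumption~\ref{as:dgp}(ii)--(iv) and the monotonicity of $m$, produces the drift: in part (i) the leading linear terms cancel because $v-m(cn^{-a})=O(n^{-1/3})$, leaving — at the natural scale $n^{-2/3}$ — the concave parabola $-\tfrac12 m'(0)f(0)s^2$ together with a linear shift $f(0)z\,s$; in part (ii) the linear term $f(0)z\,s$ appears at scale $n^{-(1+a)/2}$, while the curvature contribution is of order $n^{-2a}$ and is negligible relative to $n^{-(1+a)/2}$ exactly when $a>1/3$ and of the same order when $a=1/3$ — this is the source of the indicator $\one_{a=1/3}$. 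For the centred empirical part, the dominant term is $n^{-1}\sum_i\varepsilon_i\one\{\,\cdot<X_i\le\cdot\,\}$, whose variance matches the drift scale; a functional central limit theorem — with the Lindeberg condition secured by Assumption~\ref{as:dgp}(i) and the limiting covariance fixed by the continuity of $f$ and $\sigma^2$ at $0$ — gives convergence to $\sqrt{\sigma^2(0)f(0)}\,W_s$, two-sided in part (i) and one-sided in part (ii), the remaining stochastic terms (from $m(X_i)-v$ times the fluctuations of the indicator counts) being of strictly smaller order. This is also where, for $a\in(1/3,1)$ in part (ii), Assumption~\ref{as:dgp}(iv) can be relaxed: only the remainder $\int_0^{sn^{-a}}\bigl(m(x)-m(0)\bigr)f(x)\,\dx x$ needs to be $o(n^{-(1+a)/2})$, and $\gamma$-H\"older continuity with $\gamma>(1-a)/(2a)$ yields exactly $n^{-a(\gamma+1)}=o(n^{-(1+a)/2})$.

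Third, I would pass to the limit for the maximizer itself via the argmax continuous mapping theorem of \cite{kim1990cube}. The crux — and the step I expect to be the main obstacle — is the tightness of the rescaled maximizer $\hat s_n$, i.e.\ ruling out escape to $\pm\infty$. In part (i) this is standard because the concave quadratic drift dominates the Brownian fluctuations. In part (ii) with $a\in(1/3,1)$, however, the limit process has only a \emph{linear} drift and no stabilizing quadratic term, so the usual tightness criteria in \cite{kim1990cube} and \cite{van2000weak} do not apply directly and one must instead deploy a strong-approximation-free argument — a peeling bound, using a maximal inequality for the increments of $v\mathbb{F}_n-\mathbb{G}_n$ over dyadic shells $\{s\asymp 2^j\}$ together with the negative linear drift — to control the maximizer over growing regions. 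Granting tightness, part (i) follows by completing the square in the limit drift and using the shift-invariance of $\argmax$ under a deterministic time-shift of Brownian motion, Brownian scaling $W_{\lambda s}\stackrel{d}{=}\sqrt{\lambda}\,W_s$, and the distributional symmetry of $\argmax_t\{W_t-t^2\}$ about the origin, which collapses the constants to $|4m'(0)\sigma^2(0)/f(0)|^{1/3}$. For part (ii), one uses the duality identity $\{D^L_{[0,\infty)}(g)(s_0)\le \beta\}=\{\argmax_{s\ge0}(\beta s-g(s))\ge s_0\}$ together with Brownian scaling to rewrite $\mathbb{P}(\hat s_n\ge c)$ as the distribution function of $D^L_{[0,\infty)}\bigl(\sqrt{\sigma^2(0)/(cf(0))}\,W_t+\tfrac{t^2c}{2}m'(0)\one_{a=1/3}\bigr)(1)$, completing the proof; note that for $a>1/3$ this limit is supported on $(-\infty,0]$, the rigorous form of the downward bias anticipated in the introduction, whereas at the transitional value $a=1/3$ the additional convex parabola restores a two-sided limit consistent with part (i).
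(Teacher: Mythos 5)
Your proposal is correct and follows essentially the same route as the paper's proof: the switching relation, localization at scale $n^{-1/3}$ around $cn^{-a}$ for part (i) and at scale $n^{-a}$ around the origin for part (ii), a drift-plus-empirical-process decomposition with the FCLT verified via a VC class and Lindeberg's condition, and the argmax continuous mapping theorem with tightness for $a\in(1/3,1)$ secured by exactly the dyadic peeling argument (exploiting the negative linear drift for $u<0$) that the paper uses in place of strong approximation. The constant simplification via Brownian scaling in (i) and the duality with the left derivative of the greatest convex minorant in (ii) also match the paper.
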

The proof of this result can be found in the appendix. The most challenging part of the proof is establishing tightness when $a\in(1/3,1)$. The difficulty comes from the fact that the quadratic term vanishes in this asymptotic regime  and the standard tightness results for isotonic estimators do not apply.\footnote{See \cite{kim1990cube} and \cite{van2000weak}, Theorem 3.2.5.} For the Grenander estimator, \cite{kulikov2006behavior} suggest a solution to this problem based on the Koml\'{o}s-Major-Tusn\'{a}dy strong approximation. In our case, we would need to rely on the strong approximation to the generic empirical process, see \cite{koltchinskii1994komlos}, which leads to suboptimal results due to slower convergence rates and uniform boundedness restrictions. Our proof does not rely on the strong approximation and is based on a more standard partitioning argument.

Theorem~\ref{thm:isotonic_clt} shows that for "slow" boundary corrections with $a\in(0,1/3)$, the asymptotic distribution coincides with the one at the interior point, cf. \cite{wright1981asymptotic}. However, since the estimator is centered at $m(cn^{-a})$, it is inconsistent for $m(0)$ because the "bias" $n^{1/3}(m(cn^{-a}) - m(0))$ does not vanish for all $a\in(0,1/3)$.

The choice $a=1/3$ leads to the cube-root consistent estimator similar to the tuning-free isotonic regression estimator at the interior point
\begin{equation*}
	n^{1/3}(\hat m(cn^{-1/3}) - m(0)) \cw D^L_{[0,\infty)}\left(\sqrt{\frac{\sigma^2(0)}{cf(0)}}W_t + \frac{t^2c}{2}m'(0) \right)(1).
\end{equation*}
For the interior point $x\in(0,1)$, \cite{wright1981asymptotic} shows that
\begin{equation*}
	n^{1/3}(\hat m(x) - m(x)) \cw D^L_{(-\infty,\infty)}\left(\sqrt{\frac{\sigma^2(x)}{f(x)}}W_t + \frac{t^2}{2}m'(x) \right)(1).
\end{equation*}
This suggests that the boundary correction with $a=1/3$ and $c=1$ should behave similarly to the tuning-free isotonic regression estimator at the interior point. Alternatively, one can try to improve the point estimates with a data-driven MSE-optimal choice which we investigate in Section~\ref{sec:mc_experiments}.

For the "fast" boundary corrections with $a\in(1/3,1)$, the convergence rate is slower than $O_P(n^{-1/3})$, which comes with a benefit of relaxing the smoothness requirement for $m$. For instance, for $a=1/2$, we only need the $\gamma$-H\"{o}lder smoothness with $\gamma>1/2$ and do not require that $m'$ exists, in which case, the convergence rate is $O_P(n^{-1/4})$. Due to the slower than the cube-root convergence rate, we do not recommend the "fast" boundary corrections for the point estimation, unless one expects that $m$ is not differentiable in the neighborhood of zero. However, the "fast" corrections are useful for inference, because as we shall show in the following section, the bootstrap works without additional subsampling or nonparametric smoothing.

\begin{remark}\label{remark:right_boundary}
	One can show that for a non-decreasing function $m:[-1,0]\to\R$ and $a\in(0,1/3)$ the asymptotic distribution of $\hat m(-cn^{-a})$ is the same as in Theorem~\ref{thm:irdd_asymptotics}, while for $a\in[1/3,1)$
	\begin{equation*}
	n^{(1-a)/2}\left(\hat m\left(-cn^{-a}\right) - m(0)\right) \cw D^L_{(-\infty,0]}\left(\sqrt{\frac{\sigma^2(0)}{cf(0)}}W_t + \frac{t^2c}{2}m'(0)\one_{a=1/3} \right)(-1),
	\end{equation*}
	where $f(0),m(0)$, and $\sigma^2(0)$ are defined as the left limits.
\end{remark}

\subsection{Trimmed wild bootstrap}
It is well-known that the bootstrap fails for various isotonic estimators at the interior point.\footnote{See \cite{kosorok2008bootstrapping} and \cite{sen2010inconsistency} for a formal statement and \cite{delgado2001subsampling}, \cite{leger2006bootstrap}, an \cite{abrevaya2005bootstrap} for earlier evidences.} Several resampling schemes are available in the literature, including the smoothed nonparametric or $m$-out-of-$n$ bootstrap, see \cite{sen2010inconsistency} and \cite{patra2018consistent}; reshaping the objective function, see \cite{cattaneo2017bootstrap}; and smoothed residual bootstrap, see \cite{guntuboyina2018nonparametric}. Interestingly, as we shall show below, for the boundary point, the "fast" boundary corrections restore the consistency of the bootstrap. Consequently, we focus on more conventional bootstrap inferences.

The wild bootstrap, see \cite{wu1986jackknife} and \cite{liu1988bootstrap}, is arguably the most natural resampling scheme for the nonparametric regression. Unlike the naive nonparametric bootstrap, the wild bootstrap imposes the structure of the nonparametric regression model in the bootstrap world, so we may expect it to work better in finite samples than resampling methods that do not incorporate such information. At the same time, unlike the residual bootstrap, it does not rule out the heteroskedasticity.

The bootstrap procedure is as follows. First, we obtain the isotonic regression estimator $\hat m$, construct the trimmed estimator\footnote{Note that the estimator $\tilde m$ depends on $c$ and $a$ and that for the sake of simplicity of presentation we suppress this dependence.}
\begin{equation*}
\tilde m(x) = \begin{cases}
\hat m(x), & x \in(cn^{-a},1) \\
\hat m(cn^{-a}), & x\in[0,cn^{-a}],
\end{cases}
\end{equation*}
and compute residuals $\tilde\varepsilon_i = Y_i - \tilde m(X_i)$ for $1\leq i\leq n$. Second, we construct the wild bootstrap samples as follows:
\begin{equation*}
Y_i^* = \tilde m(X_i) + \eta_i^*\tilde\varepsilon_i,\qquad 1\leq i\leq n,
\end{equation*}
where $(\eta_i^*)_{i=1}^n$ are i.i.d.\ random variables, independent of $(Y_i,X_i)_{i=1}^n$, and such that $\E\eta_i^* = 0$, $\Var(\eta_i^*) = 1$, and $\E|\eta_i^*|^{2+\delta}<\infty$.

Let $\Pr^*(.) = \Pr(.|(Y_i,X_i)_{i=1}^\infty)$ denote the bootstrap probability conditionally on the data, and let $\hat m^*$ be the isotonic regression estimator computed from the bootstrapped sample $(Y_i^*,X_i)_{i=1}^n$. 

Note that it follows from \cite{groeneboom1983}, Corollary 2.2 that the limiting distribution in Theorem~\ref{thm:isotonic_clt} is absolutely continuous with respect to the Lebesgue measure for every $a\in(1/3,1)$. The following result establishes the consistency of the trimmed wild bootstrap procedure.
\begin{theorem}\label{thm:bootstrap}
	Suppose that Assumption~\ref{as:dgp} is satisfied with (i) strengthened to (i') $\E[\varepsilon^{4}|X]\leq C$ for some $C<\infty$; and (iv) replaced by (iv') $m$ is $\gamma$-H\"{o}lder continuous in the neighborhood of zero with $\gamma>(1-a)/2a$. Then for every $u\in\R$ and $a\in(1/3,1)$
	\begin{equation*}
		\left|\mathrm{Pr}^*\left(n^\frac{1-a}{2}\left(\hat m^*(cn^{-a}) - \hat m(cn^{-a})\right) \leq u\right) - \mathrm{Pr}^*\left(n^\frac{1-a}{2}\left(\hat m(cn^{-a}) - m(0)\right) \leq u\right)\right| \xrightarrow{P} 0.
	\end{equation*}
\end{theorem}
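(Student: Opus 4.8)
The plan is to mimic the proof of Theorem~\ref{thm:isotonic_clt}(ii) in the bootstrap world, conditionally on the data, and then argue that all the population-level objects that appear in the limit are consistently estimated by their bootstrap analogues. First I would recall the switching/argmax representation of the isotonic regression at a point: for the "fast" corrections $a\in(1/3,1)$, $n^{(1-a)/2}(\hat m^*(cn^{-a}) - \hat m(cn^{-a}))$ can be written (up to asymptotically negligible remainders) as the left-derivative at $1$ of the greatest convex minorant of the localized bootstrap cumulative-sum process
\[
\mathbb{G}_n^*(t) = n^{-1/2-a/2}\sum_{i=1}^n \one\{X_i\leq cn^{-a}t\}\,\eta_i^*\tilde\varepsilon_i + \text{(drift from }\tilde m - m(0)\text{)}.
\]
The first term is a bootstrap partial-sum process on the shrinking window $[0, cn^{-a}\,T]$; conditionally on the data it is a sum of independent mean-zero terms, and by a conditional functional CLT (Lindeberg–Feller, using $\E[\eta_i^{*2}]=1$, the moment bound $\E|\eta_i^*|^{2+\delta}<\infty$, and $\Var(\varepsilon|X)\to\sigma^2(0)$ as $X\downarrow 0$) it converges in $\Pr^*$-probability, in the Skorokhod topology on compacts, to $\sqrt{\sigma^2(0)/(cf(0))}\,W_t$. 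The constant-conditional-variance matching requires $n^{-a}\sum_{i\le n}\one\{X_i\le cn^{-a}t\}\tilde\varepsilon_i^2/(\text{count}) \xrightarrow{P}\sigma^2(0)$, which holds because $\tilde\varepsilon_i^2$ is close to $\varepsilon_i^2$ near the boundary (the trimming only modifies $\hat m$ on $[0,cn^{-a}]$, where it is already a good estimate of $m(0)$) and the density ratio count/$(ncn^{-a})\to f(0)$.

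Second, I would handle the drift. The trimmed estimator satisfies $\tilde m(x)=\hat m(cn^{-a})$ for $x\le cn^{-a}$, so on the localization window the bootstrap mean is exactly the constant $\hat m(cn^{-a})$; hence the recentered drift contributed to $\mathbb{G}_n^*$ is identically zero on $[0,c n^{-a}]$ and, just past the window, $\tilde m(cn^{-a}t)-\hat m(cn^{-a}) = \hat m(cn^{-a}t)-\hat m(cn^{-a})$ for $t\ge 1$, which after rescaling by $n^{(1-a)/2}$ is $o_P(1)$ uniformly on compacts when $a>1/3$ (this is where $\gamma>(1-a)/2a$ is used, exactly as in Theorem~\ref{thm:isotonic_clt}(ii): the quadratic/Hölder drift washes out in the fast regime). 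So the bootstrap limit process is simply $\sqrt{\sigma^2(0)/(cf(0))}\,W_t$ on $[0,\infty)$, with no drift term, matching the $a\in(1/3,1)$ case of Theorem~\ref{thm:isotonic_clt}(ii).

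Third, I would invoke the argmax/GCM continuous-mapping theorem (Theorem 2.7 of \cite{kim1990cube}, as in the proof of Theorem~\ref{thm:isotonic_clt}) to pass from the process convergence to convergence of $D^L_{[0,\infty)}(\cdot)(1)$. This requires the tightness of the relevant argmin-type quantity in the bootstrap world; I would reuse the partitioning argument of the main proof verbatim, now applied conditionally — the increments of $\mathbb{G}_n^*$ have conditional variances controlled by the data through $\sum \tilde\varepsilon_i^2\one\{\cdot\}$, which are uniformly controlled on an event of probability $\to 1$ by Assumption~\ref{as:dgp}(i') ($\E[\varepsilon^4|X]\le C$ gives the fourth-moment bound needed to make the maximal inequalities go through after replacing population moments by empirical ones). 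Since the limit law $D^L_{[0,\infty)}(\sqrt{\sigma^2(0)/(cf(0))}W_t)(1)$ is continuous (absolutely continuous by \cite{groeneboom1983}, Corollary~2.2, as noted before the statement), convergence in distribution of the conditional laws upgrades to convergence of the CDFs at every $u\in\R$, which, combined with the same statement for the unconditional sequence (Theorem~\ref{thm:isotonic_clt}(ii)), gives the claimed difference $\xrightarrow{P}0$.

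The main obstacle is the conditional tightness step: one has to rerun the bootstrap analogue of the delicate partitioning/maximal-inequality argument that replaces the Komlós–Major–Tusnády route, verifying that all the moment quantities appearing there — in particular second and fourth conditional moments of the summands $\eta_i^*\tilde\varepsilon_i$ localized to shrinking windows — are, on a probability-one-in-the-limit event, close enough to their population counterparts that the same chaining bounds apply uniformly in the bootstrap randomness. A secondary technical point is controlling $\tilde\varepsilon_i$ versus $\varepsilon_i$ near the boundary uniformly enough to transfer the variance/moment calculations; this is where the trimming is essential, since it prevents the extreme-value misbehavior of $\hat m(X_{(1)})$ from contaminating the residuals used to generate the bootstrap outcomes.
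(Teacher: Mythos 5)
Your proposal follows essentially the same route as the paper's proof: the switching relation reduces the bootstrap claim to the argmax of a localized process split into a multiplier partial-sum part (handled by a conditional multiplier CLT giving $\sqrt{\sigma^2(0)f(0)c}\,W_t$), a residual-distortion part ($\tilde\varepsilon_i$ versus $\varepsilon_i$) controlled by a maximal inequality and the uniform consistency of $\tilde m$, and a drift that is killed by the trimming and the fast correction, followed by a conditional tightness (partitioning) lemma, a conditional argmax continuous mapping theorem, absolute continuity of the limit law, and a comparison with Theorem~\ref{thm:isotonic_clt}(ii). The only points you elide that the paper treats explicitly are that the argmax/tightness argument is carried out only for $u<0$, with $u\geq 0$ handled separately by showing $\mathrm{Pr}^*\left(n^{(1-a)/2}(\hat m^*(cn^{-a})-\hat m(cn^{-a}))\geq 0\right)\to 0$, and that the H\"{o}lder condition $\gamma>(1-a)/2a$ enters via Theorem~\ref{thm:isotonic_clt}(ii) (the rate for $\hat m(cn^{-a})$, used in bounding the drift term $III_n^*$ through monotonicity of $\hat m$) rather than directly through a bootstrap drift calculation.
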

Theorem~\ref{thm:bootstrap} shows that the boundary correction alone restores the consistency of the bootstrap. The reason is that for "fast" corrections the quadratic term $m'(0)$ disappears from the asymptotic distribution and this is precisely the term that is not consistently estimated by the bootstrap. Note that the bootstrap is consistent for every $a\in(1/3,1)$. However, there is a trade-off between the consistency rate of the bootstrapped estimator $O_P(n^{(a-1)/2})$ and the rate $O_P(n^{(1-3a)/2})$ at which the quadratic term disappears; see the proof of Theorem~\ref{thm:isotonic_clt}. This trade-off should translate into the trade-off between the rates at which the confidence intervals shrink and the rate at which coverage errors vanish. Therefore, it seems reasonable to take $a=1/2$ as a rule-of-thumb, which leads to the same $O_P(n^{-1/4})$ rate balancing the two rates.\footnote{The inference-optimal choice of $c$ is more difficult. One can still use the rule-of-thumb $c=1$. We find in Monte Carlo experiments that there is some scope for improvements over this default choice.} Note that the slower $O_P(n^{-1/4})$ convergence rate is the price to pay for making the bootstrap work. However, the slower rate also comes with a benefit since this result does not require the existence of $m'$ and relies only on the $\gamma$-H\"{o}lder continuity with smoothness index $\gamma>1/2$.

\begin{remark}
	It is also possible to show that for the non-decreasing function $m:[-1,0]\to\R$ and $a\in(1/3,1)$, the bootstrap is consistent in probability.
\end{remark}

\section{Isotonic regression discontinuity designs}\label{sec:irdd}
In this section, we apply our results on the isotonic regression at the boundary to monotone regression discontinuity designs. 

\subsection{Setup}
Following \cite{hahn2001identification}, we focus on the potential outcomes framework
\begin{equation*}
Y = Y_1D + Y_0(1-D),
\end{equation*}
where $D\in\{0,1\}$ is a binary treatment indicator (1 if treated and 0 otherwise), $Y_1,Y_0\in\R$ are unobserved potential outcomes of treated and untreated units, and $Y$ is the actual observed outcome.

The causal parameter of interest is the average treatment effect at the cutoff $x_0\in\R$ of some running variable $X\in\R$, denoted
\begin{equation*}
	\theta = \E[Y_1-Y_0|X=x_0].
\end{equation*}
Without further assumptions, $\theta$ is not identified in the sense that it depends on the distribution of unobserved potential outcomes $(Y_0,Y_1)$; see \cite{holland1986statistics}.\footnote{While regression discontinuity designs identify local effects, there exist several approaches that aim to extrapolate away from the cutoff; see \cite{bertanha2020regression}, \cite{angrist2015wanna}, and other references therein. Note that if the causal effect changes monotonically in the running variable, one can obtain bounds on the causal effect between the two points.}

\cite{hahn2001identification} shows formally that the causal effect is identified as
\begin{equation}\label{eq:id}
	\theta = \lim_{x\downarrow x_0}\E[Y|X=x] - \lim_{x\uparrow x_0}\E[Y|X=x],
\end{equation}
while for fuzzy designs, the causal effect is identified as\footnote{Identification in regression discontinuity designs might be testable; see \cite{mccrary2008manipulation}, \cite{cattaneo2019simple}, \cite{bugni2018testing}, \cite{canay2017approximate}.}
\begin{equation}\label{eq:id2}
	\theta = \frac{\lim_{x\downarrow x_0}\E[Y|X=x] - \lim_{x\uparrow x_0}\E[Y|X=x]}{\lim_{x\downarrow x_0}\Pr(D=1|X=x) - \lim_{x\uparrow x_0}\Pr(D=1|X=x)};
\end{equation}
see Proposition~\ref{prop:identification} in the Supplementary Material for a statement of these results under weak conditions imposed on conditional means of potential outcomes.

\subsection{Estimators}
We are interested in estimating the average causal effect $\theta$ of a binary treatment $D$ on some outcome $Y$. For a unit $i$, with $1\leq i\leq n$, we observe $(Y_i,D_i,X_i)$. Assuming that $\theta$ is identified from the distribution of $(Y,D,X)$ according to Eq.~(\ref{eq:id}), the estimator of $\theta$ is obtained by plugging-in the corresponding boundary corrected isotonic regression estimators before and after the cutoff.

Isotonic regression discontinuity designs (iRDD) exploit the monotonicity of the expected outcome $m(x) = \E[Y|X=x]$ and the treatment assignment probability $p(x) = \Pr(D=1|X=x)$. For concreteness, assume that both functions are non-decreasing. We also assume that $X$ has compact support $[-1,1]$ and normalize the cutoff to $x_0=0$. This restriction is without loss of generality up to the monotone transformation. Let $\hat m_-$ and $\hat m_+$ be solutions to
\begin{equation*}
	\min_{\phi\in\mathcal{M}[-1,0)}\sum_{i\in I_-}(Y_{i} - \phi(X_{i}))^2\qquad\text{and}\qquad \min_{\phi\in\mathcal{M}[0,1]}\sum_{i\in I_+}(Y_{i} - \phi(X_{i}))^2
\end{equation*}
respectively, where $I_-$ and $I_+$ are sets of indices corresponding to negative and positive values of observations of the running variable. Therefore, $\hat m_-$ and $\hat m_+$ are the isotonic regression estimators before and after the cutoff. Note that the isotonic regression is a solution to the constrained least-squares problem over the closed convex cone of monotone functions and should have the projection interpretation when the monotonicity is violated. We might expect that the deviations from monotonicity far from the cutoff should be less harmful as opposed to the deviations at the cutoff.

\paragraph{Choice of $c$ and $a$.} Following the discussion in Section~\ref{sec:isotonic_regression}, we recommend setting $c=1$\footnote{We also consider the MSE-optimal choice of $c$ in Monte Carlo experiments and find that it tends to increase the MSE in small samples.} and $a=1/3$ for the point estimation and $a=1/2$ for inference. For the point estimation, this choice delivers the one-sided counterpart to the asymptotic distribution at the interior obtained with a tuning-free isotonic regression, while for inference, this choice leads to the consistent bootstrap estimates of the asymptotic distribution, balancing the accuracy of the asymptotic approximation and the convergence rate of the estimator. The choice $a=1/3$ for point estimation can also be justified under the quadratic loss function as the one that balances the rates of the bias and the variance. On the other hand, we expect that the choice $a=1/2$ for inference should balance the rate of convergence of confidence intervals and the rate of the accuracy of bootstrap approximations. The inference-optimal choice of $c$ is a more difficult problem that we leave for future research. One possible solution is to develop the loss-function based approach that pays attention to the coverage errors and confidence interval length; see \cite{calonico2020optimal} or \cite{lazarus2018har} for this type of analysis.\footnote{Recently, \cite{han2019berry} develop Berry-Esseen inequalities that might be leveraged on to characterize the coverage error.}

\medskip\medskip

Therefore, for point estimates in sharp designs we focus on the following boundary-corrected iRDD estimator
\begin{equation*}
\hat{\theta} = \hat m_+\left(cn^{-1/3}\right) - \hat m_-\left(-cn^{-1/3}\right).
\end{equation*}
For fuzzy designs, we also need to estimate treatment assignment probabilities before and after the cutoff, denoted $\hat p_-$ and $\hat p_+$, solving
\begin{equation*}
	\min_{\pi\in\mathcal{M}[-1,0)}\sum_{i\in I_-}(D_{i} - \pi(X_{i}))^2\qquad \text{and} \qquad \min_{\pi\in\mathcal{M}[0,1]}\sum_{i\in I_+}(D_{i} - \pi(X_{i}))^2.
\end{equation*}
The fuzzy iRDD estimator is computed as
\begin{equation*}
	\hat\theta^{F} = \frac{\hat m_+\left(cn^{-1/3}\right) - \hat m_-\left(-cn^{-1/3}\right)}{\hat{p}_+\left(cn^{-1/3}\right) - \hat p_-\left(-cn^{-1/3}\right)}.
\end{equation*}

\subsection{Large sample distribution}
Put $p(x)=\Pr(D=1|X=x)$ and for a function $g:[-1,1]\to\R$, define $g_+=\lim_{x\downarrow 0}g(x)$ and $g_-=\lim_{x\uparrow0}g(x)$ with some abuse of notation. The following assumption imposes several mild restrictions on the distribution of the data.

\begin{assumption}\label{as:irdd_dgp}
	$(Y_i,D_i,X_i)_{i=1}^n$ is an i.i.d. sample of $(Y,D,X)$ such that (i) $\E[|\varepsilon|^{2+\delta}|X]\leq C<\infty$ for some $\delta>0$ and $m$ is uniformly bounded; (ii) the distribution of $X$ has Lebesgue density $f$, uniformly bounded away from zero and infinity on the support of $X$, and such that $f_-$ and $f_+$ exist; (iii) $\sigma^2$ is uniformly bounded on $[-1,1]$ and $\sigma^2_+$ and $\sigma^2_-$ exist; (iv) $m$ is continuously differentiable in the right and left neighborhoods of zero with $m'_-,m'_+>0$; (v) $m\in\mathcal{M}[-1,1]$.
\end{assumption}
Assumption~\ref{as:irdd_dgp} is comparable to assumptions typically used in the RDD literature, e.g., see \cite{hahn2001identification}, Theorem 4. Note that we are agnostic about the smoothness of the marginal density of $X$, and only assume the existence of one-sided derivatives of conditional means. The following results describes the large sample approximation to the distribution of the sharp iRDD estimator.

\begin{theorem}\label{thm:irdd_asymptotics}
	Suppose that Assumption~\ref{as:irdd_dgp} is satisfied. Then
	\begin{equation*}
	n^{1/3}(\hat\theta - \theta)\cw D_{[0,\infty)}^L\left(\sqrt{\frac{\sigma^2_+}{cf_+}}W_t^+ + \frac{t^2c}{2}m_+'\right)(1) - D_{(-\infty,0]}^L\left(\sqrt{\frac{\sigma_-^2}{cf_-}}W_t^- + \frac{t^2c}{2}m_-'\right)(-1),
	\end{equation*}
	where $W_t^+$ and $W_t^-$ are two independent standard Brownian motions originating from zero and running in opposite directions.
\end{theorem}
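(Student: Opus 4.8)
The plan is to reduce the statement to two applications of Theorem~\ref{thm:isotonic_clt}, one on each side of the cutoff, together with an asymptotic independence argument. First I would observe that $\hat\theta - \theta = \bigl(\hat m_+(cn^{-1/3}) - m_+\bigr) - \bigl(\hat m_-(-cn^{-1/3}) - m_-\bigr)$, using that $\theta = m_+ - m_-$ by the identification result in Eq.~(\ref{eq:id}). Thus after multiplying by $n^{1/3}$ the quantity of interest is a difference of two boundary-corrected isotonic estimators, each centered at the appropriate one-sided limit. The key point is that $\hat m_+$ is computed only from the subsample $\{i\in I_+\}$ and $\hat m_-$ only from $\{i\in I_-\}$; since the observations are i.i.d. and these index sets are determined by the sign of $X_i$, the two subsamples are independent conditionally on the split, so the two estimators are (jointly) independent. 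Therefore their scaled, centered versions converge jointly to a pair of independent limits, and the limit of the difference is the difference of the two limits.

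Second, I would apply Theorem~\ref{thm:isotonic_clt}(ii) with $a=1/3$ to the positive-side estimator $\hat m_+$. A subtlety is that Theorem~\ref{thm:isotonic_clt} is stated for a regression on $[0,1]$ with the \emph{full} sample of size $n$, whereas here $\hat m_+$ uses the subsample of size $n_+ = \#I_+ = \sum_i \one_{X_i\ge 0}$, which is random but satisfies $n_+/n \to \Pr(X\ge 0) =: q_+ \in (0,1)$ almost surely by the law of large numbers. Running Theorem~\ref{thm:isotonic_clt}(ii) in the subsample (whose covariate density is $f(\cdot)/q_+$ on $[0,1]$ and whose conditional variance is unchanged) and then rewriting the normalization $n_+^{1/3}$ as $(n_+/n)^{1/3} n^{1/3}$ and the boundary point $c n^{-1/3} = c\,(n/n_+)^{1/3}\, n_+^{-1/3}$ shows that the factors involving $q_+$ cancel exactly: the $q_+$ in the density is offset by the $q_+$ coming from re-expressing $n_+$ in terms of $n$ both in the scaling rate and in the location of the evaluation point. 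After this bookkeeping one is left with the clean limit $D^L_{[0,\infty)}\bigl(\sqrt{\sigma^2_+/(cf_+)}\,W^+_t + (t^2 c/2)m'_+\bigr)(1)$, and symmetrically, applying Remark~\ref{remark:right_boundary} to $\hat m_-$ on $[-1,0]$ gives $D^L_{(-\infty,0]}\bigl(\sqrt{\sigma^2_-/(cf_-)}\,W^-_t + (t^2 c/2)m'_-\bigr)(-1)$.

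Third, I would assemble the pieces: by the independence of the two subsamples the pair of limits $(W^+, W^-)$ may be taken as independent two-sided Brownian motions (only the half-lines $[0,\infty)$ and $(-\infty,0]$ are relevant for the respective greatest-convex-minorant functionals), and by the continuous mapping theorem applied to the map $(u,v)\mapsto u-v$ the difference converges to the stated limit. To invoke Theorem~\ref{thm:isotonic_clt} I must check that Assumption~\ref{as:irdd_dgp} implies Assumption~\ref{as:dgp} holds for each one-sided subproblem — this is immediate, since (i)--(v) of the latter are precisely the one-sided specializations of (i)--(v) of the former, and the rescaled density $f/q_\pm$ remains bounded away from zero and infinity with the one-sided limit $f_\pm/q_\pm$ existing.

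The main obstacle I anticipate is the careful handling of the random subsample sizes $n_\pm$ and the verification that the $q_\pm$ factors cancel; this requires either a direct Slutsky-type argument (replacing $n_\pm$ by $\lfloor nq_\pm\rfloor$ up to a negligible error, using monotonicity of $\hat m_\pm$ and a sandwiching argument, or a conditioning argument given $(n_+,n_-)$) or a slightly more general version of Theorem~\ref{thm:isotonic_clt} in which the evaluation point is allowed to be a random sequence $c_n n^{-a}$ with $c_n \to c$ — which the proof of that theorem in fact delivers, since $c$ only enters through the localized empirical process whose limit depends continuously on $c$. Beyond this, everything is routine: the decomposition is exact, the independence is structural, and the continuous mapping step is standard.
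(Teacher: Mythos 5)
Your proposal is correct in its overall architecture and matches the paper's: decompose $n^{1/3}(\hat\theta-\theta)$ into the two one-sided centered estimators, invoke Theorem~\ref{thm:isotonic_clt}(ii) with $a=1/3$ on the right and Remark~\ref{remark:right_boundary} on the left, and conclude by independence of the two limits. Where you diverge is in the key technical step. You treat $\hat m_+$ as an isotonic regression on a random-size subsample $n_+$ with rescaled density $f/q_+$, apply the theorem verbatim there, and then untangle the $q_+$ factors through the normalization rate and the evaluation point. This does work (the $q_\pm$ cancellations go through, as they must), but it forces you to handle random sample sizes, a random effective constant $c_n\to cq_+^{1/3}$, and a Brownian/GCM rescaling identity. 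The paper avoids all of this: it observes that $\hat m_+$ is the GCM left-derivative of the cusum diagram built from $\tilde F_n(x)=F_n(x)-F_n(0)$ and $M_n(x)-M_n(0)$, and simply reruns the proof of Theorem~\ref{thm:isotonic_clt} with $F_n,F$ replaced by these shifted versions, keeping the full-sample rate $n^{1/3}$ and the full-sample density throughout; the localized covariance computations then produce $\sigma^2_+ f_+ c(t\wedge s)$ directly with no $q_+$ ever appearing. On the independence point, your ``independent conditionally on the split'' claim needs the extra conditioning/deconditioning step you allude to; the paper instead establishes joint weak convergence of the two localized empirical processes $I_{n2}^\pm$ and notes they are asymptotically uncorrelated, so $W^+$ and $W^-$ are independent because zero-mean Gaussian processes are determined by their covariance. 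Both routes reach the same limit; the paper's is shorter and sidesteps the random-subsample bookkeeping you correctly flag as your main obstacle.
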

The proof of this result can be found in the appendix. Theorem~\ref{thm:irdd_asymptotics} shows that the large sample distribution of the sharp iRDD estimator can be approximated by the difference of slopes of the greatest convex minorants of two scaled independent Brownian motions plus the parabola originating from zero and running in opposite directions. To describe the large sample distribution for the fuzzy iRDD, define additionally the conditional covariance function $\rho(x)=\E[\varepsilon(D-p(X))|X=x]$.
\begin{theorem}\label{thm:irdd_fuzzy}
	Suppose that Assumption~\ref{as:irdd_dgp} is satisfied. Suppose additionally that $p\in\mathcal{M}[-1,1]$ is continuously differentiable in the right and left neighborhoods of zero with $p'_-,p'_+>0$, and $p_+,p_-\in(0,1)$. Then
	\begin{equation*}
	\begin{aligned}
	n^{1/3}(\hat\theta^F - \theta)\cw \frac{1}{p_+ - p_-}\xi_1 - \frac{m_+ - m_-}{(p_+ - p_-)^2}\xi_2
	\end{aligned}
	\end{equation*}
	with
	\begin{equation*}
	\begin{aligned}
	\xi_1 & = D_{[0,\infty)}^L\left(\sqrt{\frac{\sigma^2_+}{cf_+}}W_t^+ + \frac{t^2c}{2}m_+'\right)(1) - D_{(-\infty,0]}^L\left(\sqrt{\frac{\sigma_-^2}{cf_-}}W_t^- + \frac{t^2c}{2}m_-'\right)(-1) \\
	\xi_2 & = D_{[0,\infty)}^L\left(\sqrt{\frac{p_+(1-p_+)}{cf_+}}B_t^+ + \frac{t^2c}{2}p_+'\right)(1) - D_{(-\infty,0]}^L\left(\sqrt{\frac{p_-(1-p_-)}{cf_-}}B_t^- + \frac{t^2c}{2}p_-'\right)(-1),
	\end{aligned}
	\end{equation*}
	where $W_t^+,W_t^-,B^+_t$, and $B^-_t$ are standard Brownian motions such that any two processes with different signs are independent, and
	\begin{equation*}
		\Cov(W_t^+,B_s^+) = \frac{\rho_+(t\wedge s)}{\sqrt{\sigma^2_+p_+(1-p_+)}} ,\qquad	\Cov(W_t^-,B_s^-) = \frac{\rho_-(t\wedge s)}{\sqrt{\sigma^2_-p_-(1-p_-)}}.
	\end{equation*}
\end{theorem}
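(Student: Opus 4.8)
The plan is to deduce the result from a joint weak-convergence statement for the four boundary-corrected estimators, followed by the delta method applied to the ratio map. Write $x_n=cn^{-1/3}$ and
\[
V_n=n^{1/3}\bigl(\hat m_+(x_n)-m_+,\ \hat m_-(-x_n)-m_-,\ \hat p_+(x_n)-p_+,\ \hat p_-(-x_n)-p_-\bigr).
\]
Since the design is fuzzy, $p_+-p_-\neq 0$, and each coordinate of $V_n$ being $O_P(1)$ gives $\hat m_\pm(\pm x_n)\p m_\pm$ and $\hat p_\pm(\pm x_n)\p p_\pm$. Writing $\hat\theta^F=\psi(\hat m_+(x_n),\hat m_-(-x_n),\hat p_+(x_n),\hat p_-(-x_n))$ and $\theta=\psi(m_+,m_-,p_+,p_-)$ for $\psi(a_1,a_2,b_1,b_2)=(a_1-a_2)/(b_1-b_2)$, a first-order expansion yields
\[
n^{1/3}(\hat\theta^F-\theta)=\nabla\psi(m_+,m_-,p_+,p_-)\cdot V_n+o_P(1),
\]
with $\nabla\psi(m_+,m_-,p_+,p_-)=\bigl(\tfrac{1}{p_+-p_-},-\tfrac{1}{p_+-p_-},-\tfrac{m_+-m_-}{(p_+-p_-)^2},\tfrac{m_+-m_-}{(p_+-p_-)^2}\bigr)$. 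Hence it suffices to show $V_n\cw(\xi_1^+,\xi_1^-,\xi_2^+,\xi_2^-)$, where $\xi_1^\pm,\xi_2^\pm$ are the four slope-of-greatest-convex-minorant functionals whose differences define $\xi_1$ and $\xi_2$; the claimed limit then follows by the continuous mapping theorem, since $\nabla\psi\cdot(\xi_1^+,\xi_1^-,\xi_2^+,\xi_2^-)=\frac{\xi_1}{p_+-p_-}-\frac{m_+-m_-}{(p_+-p_-)^2}\xi_2$.

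The marginal convergence of each coordinate of $V_n$ is supplied by the argument behind Theorem~\ref{thm:irdd_asymptotics}, i.e.\ by Theorem~\ref{thm:isotonic_clt}(ii) (and Remark~\ref{remark:right_boundary}) with $a=1/3$: the first two coordinates are the one-sided boundary behaviors of the isotonic regression of $Y$ on $X$, and the last two are those of the ``regression'' of $D$ on $X$, whose error $D-p(X)$ is bounded (so Assumption~\ref{as:irdd_dgp}(i) holds trivially), has conditional variance $p(x)(1-p(x))\to p_\pm(1-p_\pm)$, and whose regression function $p$ is monotone and one-sidedly continuously differentiable at zero with $p'_\pm>0$. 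This produces the four marginal limits $\xi_1^\pm,\xi_2^\pm$ of the stated form.

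The substance of the proof is promoting these marginals to joint convergence with the stated correlation kernel. First, the two ``$+$'' coordinates are functions of the observations with $X_i>0$ only and the two ``$-$'' coordinates of those with $X_i<0$ only; conditioning on the signs $(\one\{X_i>0\})_{i=1}^n$ and using $n^{-1}\#\{i:X_i>0\}\to\Pr(X>0)$, the two blocks are asymptotically independent, so $(\xi_1^+,\xi_2^+)\si(\xi_1^-,\xi_2^-)$. Second, within the ``$+$'' block the governing objects are the localized partial-sum processes
\[
s\mapsto n^{-1/3}\!\!\sum_{0<X_i\le x_n s}\!\!(Y_i-m_+),\qquad s\mapsto n^{-1/3}\!\!\sum_{0<X_i\le x_n s}\!\!(D_i-p_+),\qquad s\ge 0,
\]
which converge jointly, as processes, to a bivariate Brownian motion by a functional CLT for triangular arrays (finite-dimensional convergence from a multivariate Lindeberg CLT, tightness exactly as in the univariate argument behind Theorem~\ref{thm:isotonic_clt}). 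Decomposing $Y_i-m_+=\varepsilon_i+(m(X_i)-m_+)$ and $D_i-p_+=(D_i-p(X_i))+(p(X_i)-p_+)$, the deterministic corrections are $o(1)$ on $\{0<X_i\le x_n s\}$, so the limiting covariance is driven by $\E[\varepsilon(D-p(X))\mid X=x]=\rho(x)\to\rho_+$ together with $f_+$; since $\Var(Y\mid X=x)\to\sigma^2_+$, $\Var(D\mid X=x)\to p_+(1-p_+)$ and this covariance share the common factors $f_+$ and the common time change, standardizing the two components to unit-variance Brownian motions $W^+,B^+$ makes those factors cancel and leaves $\Cov(W_t^+,B_s^+)=\rho_+(t\wedge s)/\sqrt{\sigma^2_+p_+(1-p_+)}$, and analogously on the ``$-$'' block. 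Feeding the resulting joint limit process through the argmax and greatest-convex-minorant-slope functionals via the continuous mapping theorem of \cite{kim1990cube} — whose tightness requirement is met exactly as in the $a=1/3$ case of Theorem~\ref{thm:isotonic_clt}(ii), because the quadratic drifts $\tfrac{c}{2}m'_\pm t^2$ and $\tfrac{c}{2}p'_\pm t^2$ are present — yields $V_n\cw(\xi_1^+,\xi_1^-,\xi_2^+,\xi_2^-)$ with the stated cross-covariances, and the first paragraph finishes the argument.

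The main obstacle is this last step: one must run the bivariate functional CLT and the argmax/GCM continuous-mapping arguments simultaneously for all four estimators while tracking carefully (i) which covariance terms survive the boundary localization — only $\rho_+$ on the ``$+$'' side, not the full $\Cov(Y,D\mid X=x)$, and analogously $\rho_-$ on the ``$-$'' side — and (ii) the asymptotic independence of the two one-sided subsamples. Neither ingredient is conceptually new beyond the single-estimator analysis behind Theorems~\ref{thm:isotonic_clt} and \ref{thm:irdd_asymptotics}, but assembling them into a joint statement with the precise covariance kernels is where the care is required.
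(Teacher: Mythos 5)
Your proposal is correct and follows essentially the same route as the paper: joint weak convergence of the localized partial-sum processes for the $(Y,X)$ and $(D,X)$ regressions on each side of the cutoff, with cross-covariance $\rho_\pm c f_\pm(t\wedge s)$ yielding exactly the stated correlation kernel after standardization, followed by the joint argmax continuous mapping theorem (the paper cites \cite{abrevaya2005bootstrap}, Theorem~3) and independence of the two one-sided subsamples. The only cosmetic difference is that you apply the delta method for the ratio map up front, whereas the paper performs the equivalent exact ratio expansion $n^{1/3}(\hat g/\hat h - g/h) = (n^{1/3}(\hat g - g)h - n^{1/3}(\hat h - h)g)/(h\hat h)$ at the end.
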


Both results follow from the CLT for the boundary-corrected isotonic regression estimator obtained in Theorem~\ref{thm:isotonic_clt}; see appendix for the formal proof. A consequence of Theorems~\ref{thm:irdd_asymptotics} and \ref{thm:irdd_fuzzy} is that the boundary-corrected iRDD estimators $\hat \theta$ and $\hat\theta^F$ are consistent in probability for the causal effect parameter $\theta$ and provide valid point estimates.

\subsection{Trimmed wild bootstrap}\label{sec:irdd_wb}
In this section, we study the trimmed wild bootstrap. The resampling procedure is as follows. First, we construct the trimmed estimator
\begin{equation*}
\tilde m(x) = \begin{cases}
\hat m_-(x),  & x\in (-1,-cn^{-1/2}) \\
\hat m_-(-cn^{-1/2}), & x\in [-cn^{-1/2},0] \\
\hat m_+(cn^{-1/2}), & x\in(0,cn^{-1/2}] \\
\hat m_+(x), & x\in (cn^{-1/2},1).
\end{cases}
\end{equation*}
Second, we simulate the wild bootstrap samples as follows
\begin{equation*}
Y_i^* = \tilde m(X_i) + \eta_i^*\tilde\varepsilon_i,\qquad i=1,\dots,n,
\end{equation*}
where $(\eta_i^*)_{i=1}^n$ are i.i.d. multipliers, independent of $(Y_i,D_i,X_i)_{i=1}^n$, and $\tilde\varepsilon_i = Y_i - \tilde m(X_i)$. We call this procedure trimming since it trims the estimator close to boundaries when we generate bootstrap samples. Trimming is needed in addition to the boundary correction of the iRDD estimator
\begin{equation*}
	\check{\theta} = \hat m_+\left(cn^{-1/2}\right) - \hat m_-\left(-cn^{-1/2}\right)
\end{equation*}
and its bootstrapped counterpart
\begin{equation*}
	\check{\theta}^* = \hat m^{*}_+(cn^{-1/2}) - \hat m^{*}_-(-cn^{-1/2}),
\end{equation*}
where $\hat m^{*}_+$ and $\hat m^{*}_-$ are isotonic estimators computed from the bootstrapped sample $(Y_i^*,D_i,X_i)_{i=1}^n$ similarly to $\hat m_-$ and $\hat m_+$. 

The following results establishes the bootstrap consistency for the sharp iRDD.
\begin{theorem}\label{thm:irdd_bootstrap}
	Suppose that Assumptions~\ref{as:irdd_dgp} are satisfied with (i) strengthened to (i') $\E[\varepsilon^{4}|X]\leq C$ for some $C<\infty$; and (iv) replaced by (iv') $m$ is $\gamma$-H\"{o}lder continuous in the left and the right neighborhoods of zero with $\gamma>1/2$ and continuous on $[-1,0)$ and $(0,1]$. If multipliers $(\eta_i^*)_{i=1}^n$ are such that $\E\eta_i^* = 0$, $\Var(\eta_i^*)=1$, and $\E|\eta_i^*|^{2+\delta}<\infty$ for some $\delta>0$, then for every $u\in\R$
	\begin{equation*}
	\left|\mathrm{Pr}^*\left(n^{1/4}(\check \theta^* - \check \theta) \leq u\right) - \Pr\left(n^{1/4}(\check{\theta} - \theta) \leq u\right)\right| \xrightarrow{P} 0,
	\end{equation*}
	where $\Pr^*(.) = \Pr(.|(X_i,Y_i)_{i=1}^\infty)$.
\end{theorem}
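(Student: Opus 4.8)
The plan is to reduce the two-sided statement to the one-sided bootstrap consistency of Theorem~\ref{thm:bootstrap}, applied separately on each side of the cutoff, and then to recombine the two pieces via independence. Since the sharp estimators $\check\theta$ and $\check\theta^*$ do not involve $D_i$, the data is effectively $(Y_i,X_i)_{i=1}^\infty$, as the statement's $\Pr^*$ reflects.

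First I would note the block structure. Because only the outcomes are resampled --- the design points $X_i$, hence the partition $I_-\cup I_+$, being held fixed --- and because $\tilde m$ restricted to $[0,1]$ equals the Section~\ref{sec:isotonic_regression} trimmed estimator built from $\hat m_+$ and $\{\tilde\varepsilon_i:i\in I_+\}$, the estimator $\hat m_+^*$ depends on the data only through $\{(Y_i,X_i):i\in I_+\}$ and on the multipliers only through $(\eta_i^*)_{i\in I_+}$, and symmetrically for $\hat m_-^*$. In particular, on each side the procedure generating $\check\theta^*$ is \emph{exactly} the trimmed wild bootstrap of Section~\ref{sec:isotonic_regression} with $a=1/2$, applied to the subsample indexed by $I_+$ (resp.\ $I_-$), which conditionally on its size is an i.i.d.\ sample from the law of $(Y,X)$ given $X>0$ (resp.\ $X<0$), rescaled to $[0,1]$ (resp.\ $[-1,0]$). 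Writing $\theta=m_+-m_-$, decompose $n^{1/4}(\check\theta-\theta)=A_n-B_n$ and $n^{1/4}(\check\theta^*-\check\theta)=A_n^*-B_n^*$ with $A_n=n^{1/4}(\hat m_+(cn^{-1/2})-m_+)$, $A_n^*=n^{1/4}(\hat m_+^*(cn^{-1/2})-\hat m_+(cn^{-1/2}))$, and $B_n,B_n^*$ the analogous left-side quantities. Conditionally on the full data, $A_n^*$ and $B_n^*$ are independent, being functions of the disjoint blocks $(\eta_i^*)_{i\in I_+}$ and $(\eta_i^*)_{i\in I_-}$ of i.i.d.\ multipliers.

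Second I would establish the marginal statements $\Pr^*(A_n^*\le u)\xrightarrow{P}G_+(u)$ and $\Pr(A_n\le u)\to G_+(u)$ for a fixed, continuous c.d.f.\ $G_+$, and symmetrically on the left. These follow from Theorem~\ref{thm:bootstrap} and Theorem~\ref{thm:isotonic_clt}(ii) (with Remark~\ref{remark:right_boundary} on the left) once the normalizations are reconciled: setting $n_\pm=|I_\pm|$ and $q_+=\Pr(X>0)\in(0,1)$, $q_-=1-q_+$, the SLLN gives $n_\pm/n\to q_\pm$ a.s., whence $n^{1/4}=(n/n_\pm)^{1/4}n_\pm^{1/4}$ and $cn^{-1/2}=\tilde c_n n_\pm^{-1/2}$ with $\tilde c_n=c(n_\pm/n)^{1/2}\to cq_\pm^{1/2}$ a.s., so the $I_+$ sub-problem is Theorem~\ref{thm:isotonic_clt}(ii)/Theorem~\ref{thm:bootstrap} at sample size $n_+$ with a \emph{convergent} rather than fixed boundary-correction constant $\tilde c_n$. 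Since (iv') gives $\gamma>1/2=(1-a)/(2a)$ at $a=1/2$ and (i') supplies the fourth moment required by Theorem~\ref{thm:bootstrap}, the hypotheses hold; continuity of $G_+$ is the absolute continuity recorded after Theorem~\ref{thm:bootstrap} via \cite{groeneboom1983}, Corollary 2.2, which is unaffected by the harmless scalar rescaling. Combining: by conditional independence of $A_n^*,B_n^*$ and the convergence of their conditional laws to the continuous laws $G_+,G_-$, the conditional law of $A_n^*-B_n^*$ converges in probability, as a random measure, to $G_+\ast\check G_-$, with $\check G_-$ the law of the negative of a $G_-$ variable; since this limit $H:=G_+\ast\check G_-$ is continuous, $\Pr^*(n^{1/4}(\check\theta^*-\check\theta)\le u)\xrightarrow{P}H(u)$ for every $u$. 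On the other hand, conditioning on $(X_i)_{i=1}^\infty$ renders $\{Y_i:i\in I_+\}$ and $\{Y_i:i\in I_-\}$ independent, so $A_n$ and $B_n$ are conditionally independent, and, exactly as in the proof of Theorem~\ref{thm:irdd_asymptotics} but with $a=1/2$ (which removes the parabola and yields the $n^{1/4}$ rate), $n^{1/4}(\check\theta-\theta)=A_n-B_n\cw H$, i.e.\ $\Pr(n^{1/4}(\check\theta-\theta)\le u)\to H(u)$. Subtracting the two displays and using the triangle inequality gives the claim.

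I expect the main obstacle to be the reconciliation in the second step --- upgrading Theorems~\ref{thm:isotonic_clt}(ii) and \ref{thm:bootstrap} from a fixed boundary-correction constant $c$ to a sequence $\tilde c_n\to c_\infty>0$. This is not purely cosmetic, since $\tilde c_n$ is itself random (a function of $n_+/n$); however, $c$ enters the proofs of those theorems only through deterministic rescalings of the limiting Brownian-plus-drift functional and through the width of the localization window, both continuous in $c$ and stable under a convergent perturbation, so the extension goes through with only notational changes. The remaining ingredients --- the block structure of the bootstrap, conditional independence across the two sides, convergence in probability of the convolution of conditionally independent random measures, and the passage from weak convergence of conditional laws to pointwise c.d.f.\ convergence at continuity points of the continuous limit $H$ --- are routine.
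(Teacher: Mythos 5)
Your overall architecture --- reduce to the one-sided bootstrap consistency of Theorem~\ref{thm:bootstrap} on each side of the cutoff, exploit the block structure of the multipliers to get conditional independence of $A_n^*$ and $B_n^*$, and convolve the two continuous limit laws --- is the right skeleton and is what the paper intends (Theorem~\ref{thm:irdd_bootstrap} is proved by combining Theorem~\ref{thm:bootstrap}, its left-boundary analogue, and the two-sample independence argument already used for Theorem~\ref{thm:irdd_asymptotics}). Where you genuinely diverge is in how you reconcile the normalizations, and this is also where your proof carries its only real burden. You condition on $n_\pm=|I_\pm|$, treat each side as an i.i.d.\ subsample of random size from the conditional law of $(Y,X)$ given the sign of $X$, and absorb the mismatch between $n$ and $n_\pm$ into a \emph{random} boundary-correction constant $\tilde c_n=c(n_\pm/n)^{1/2}$; you then correctly identify that Theorems~\ref{thm:isotonic_clt}(ii) and \ref{thm:bootstrap} must be upgraded from fixed $c$ to a random convergent sequence $\tilde c_n\to cq_\pm^{1/2}$, and you assert this extension without proving it. The paper avoids this issue entirely: as in the proof of Theorem~\ref{thm:irdd_asymptotics}, one keeps the \emph{full-sample} empirical quantities and simply replaces $F_n(x)$, $M_n(x)$ by $F_n(x)-F_n(0)$, $M_n(x)-M_n(0)$ (and mirrored versions on the left), so the switching relation, the localization $s\mapsto cn^{-a}t$, the empirical-process limits, and the tightness lemmas all run verbatim with the deterministic constant $c$ and rate $n^{(1-a)/2}$ in the full sample size; the one-sided limit then automatically involves the full density $f_\pm$ rather than a conditional density, matching the statement. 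Your route buys a clean modular reduction to the already-stated one-sided theorems at the cost of an extension lemma (uniformity of the argmax continuous mapping and of the weak convergence in $c$ over a neighborhood of $cq_\pm^{1/2}$, plus the Brownian-scaling bookkeeping to check that $q_\pm$ cancels from the limit); the paper's route buys a fixed $c$ at the cost of redoing the recentered empirical-process computations. If you keep your route, the extension to random $\tilde c_n$ should be stated and proved (continuity in $c$ of the limiting functional plus equicontinuity of the pre-limit laws in $c$), not just asserted as ``notational.'' The remaining steps --- the block structure, conditional independence of the two sides, and passing from convergence in probability of two conditionally independent random laws to convergence of the law of their difference via continuity of the convolution $H=G_+\ast\check G_-$ --- are sound.
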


\section{Monte Carlo experiments}\label{sec:mc_experiments}
In this section, we study the finite-sample performance of our iRDD estimator. We simulate $5,000$ samples of size $n\in\{200,500,1000\}$ as follows:
\begin{equation*}
Y = m(X) + \theta\one_{[0,1]}(X) + \sigma(X)\varepsilon, 
\end{equation*}
where $\varepsilon\sim N(0,1)$ and $\varepsilon\si X$. In the baseline DGP, we set $m(x) = x^3+0.25x$, $\theta = 1$, $X\sim 2\times \mathrm{Beta}(2,2)-1$, and $\sigma(x)=1$ (homoskedasticity). We compute the boundary-corrected estimator using the pool adjacent violators algorithm.

Figures~\ref{fig:mc_distr} illustrates the finite sample distribution of the boundary-corrected iRDD estimator for samples of different sizes. The exact finite-sample distribution is centered around the population value of the parameter and concentrates around the population parameter as the sample size increases. 

\begin{figure}[ht]
	\begin{subfigure}[b]{0.3\textwidth}
		\includegraphics[width=\textwidth]{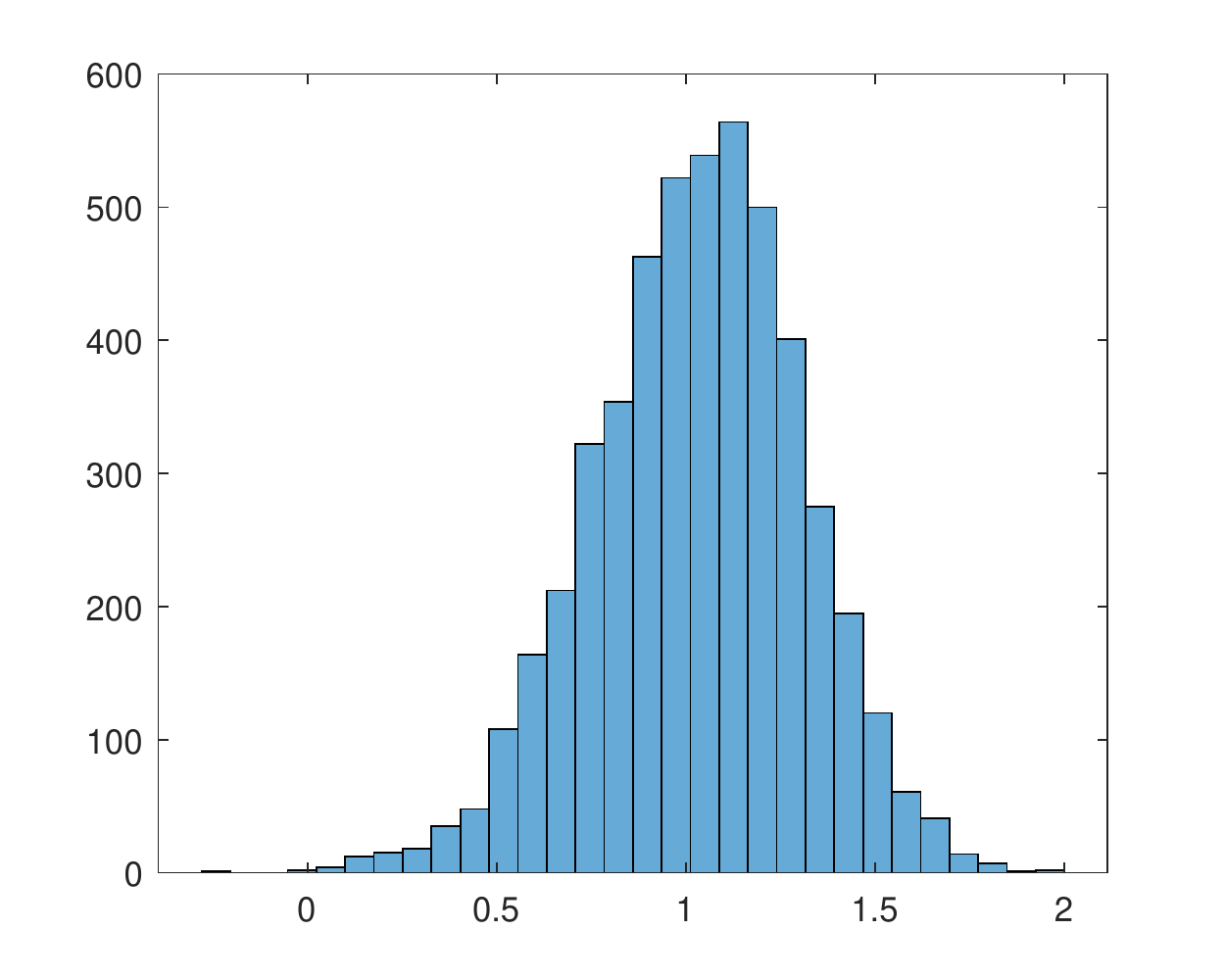}
		\caption{$n = 200$}
	\end{subfigure}
	\begin{subfigure}[b]{0.3\textwidth}
		\includegraphics[width=\textwidth]{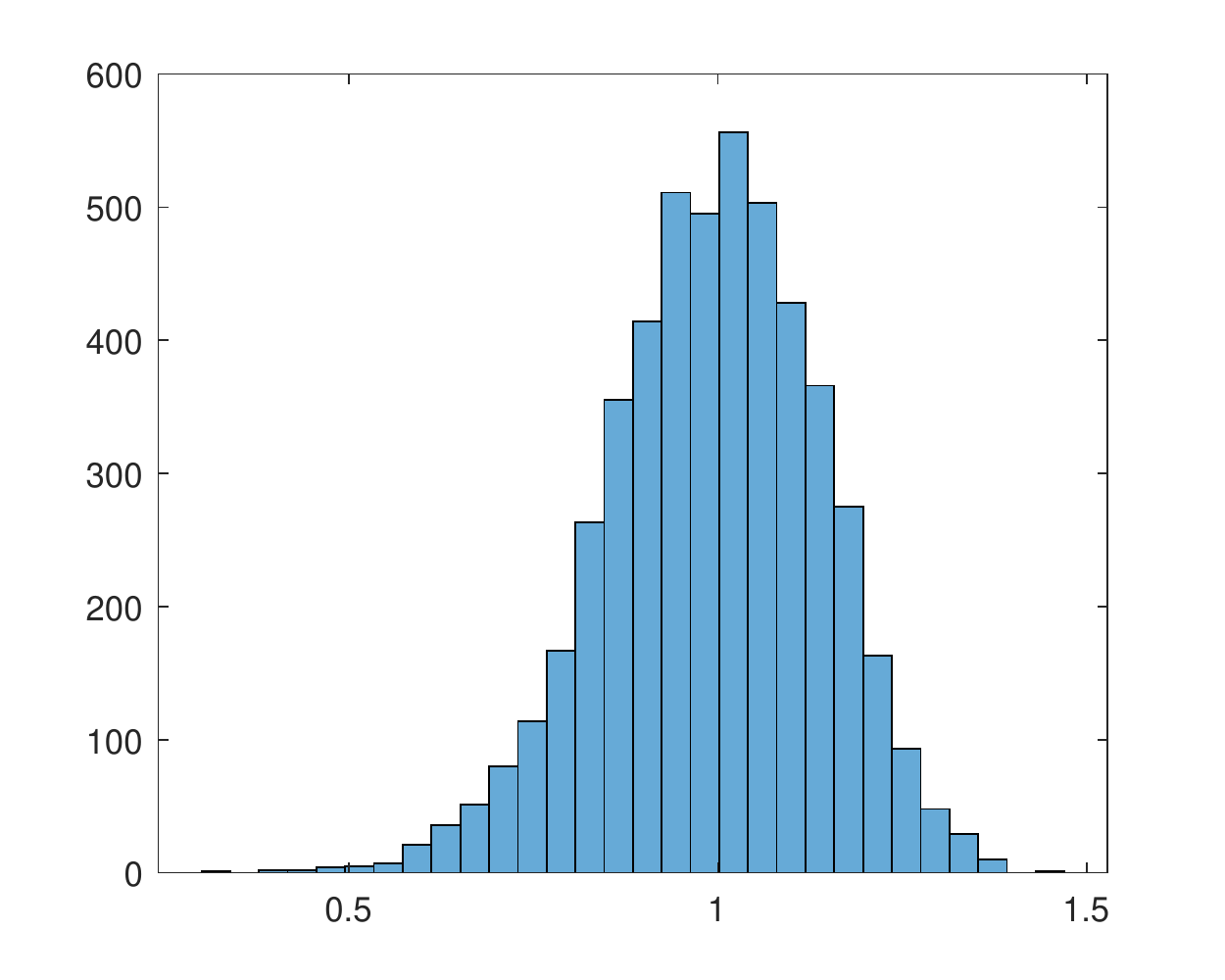}
		\caption{$n = 500$}
	\end{subfigure}
	\begin{subfigure}[b]{0.3\textwidth}
		\includegraphics[width=\textwidth]{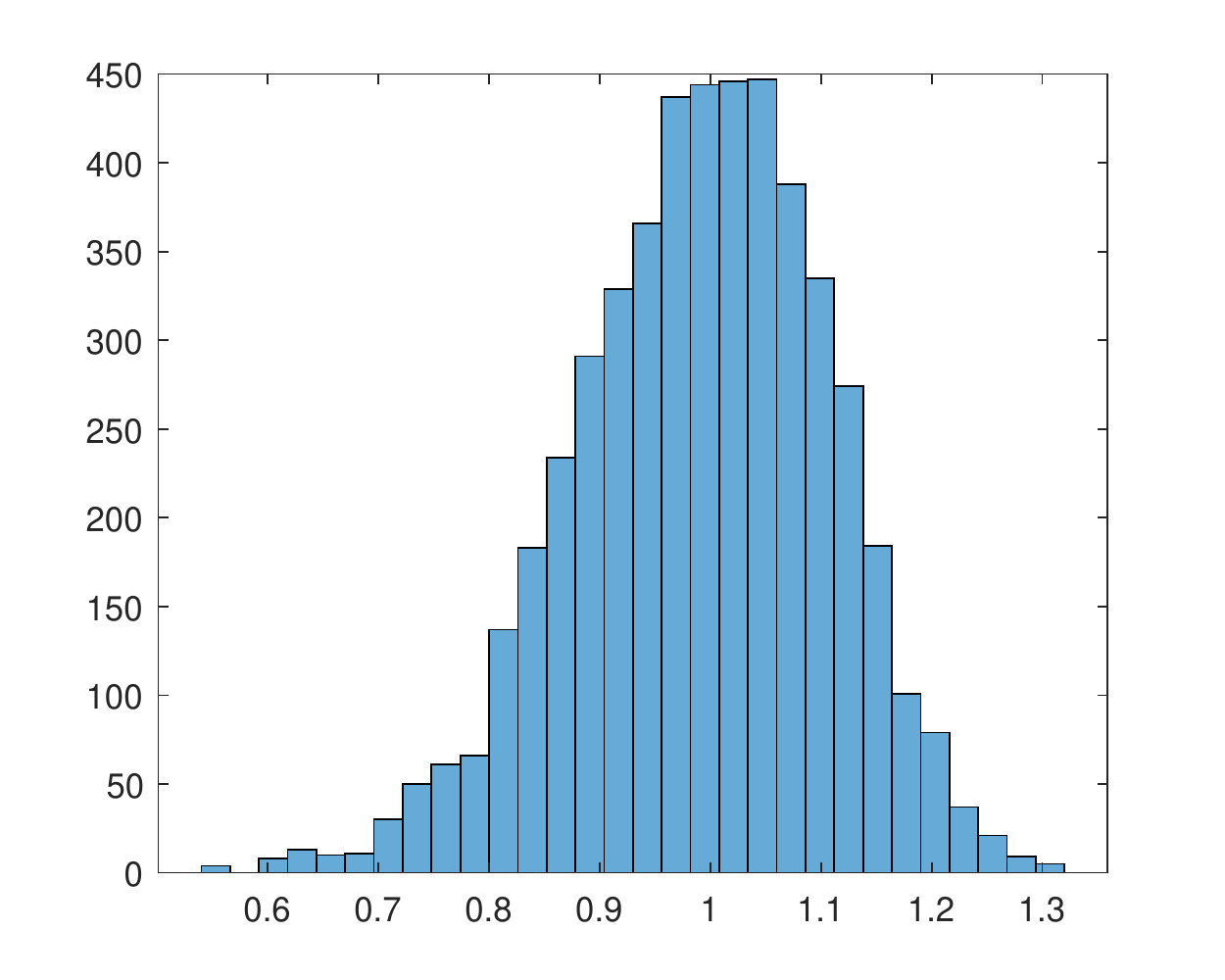}
		\caption{$n = 1000$}
	\end{subfigure}
	\begin{subfigure}[b]{0.3\textwidth}
		\includegraphics[width=\textwidth]{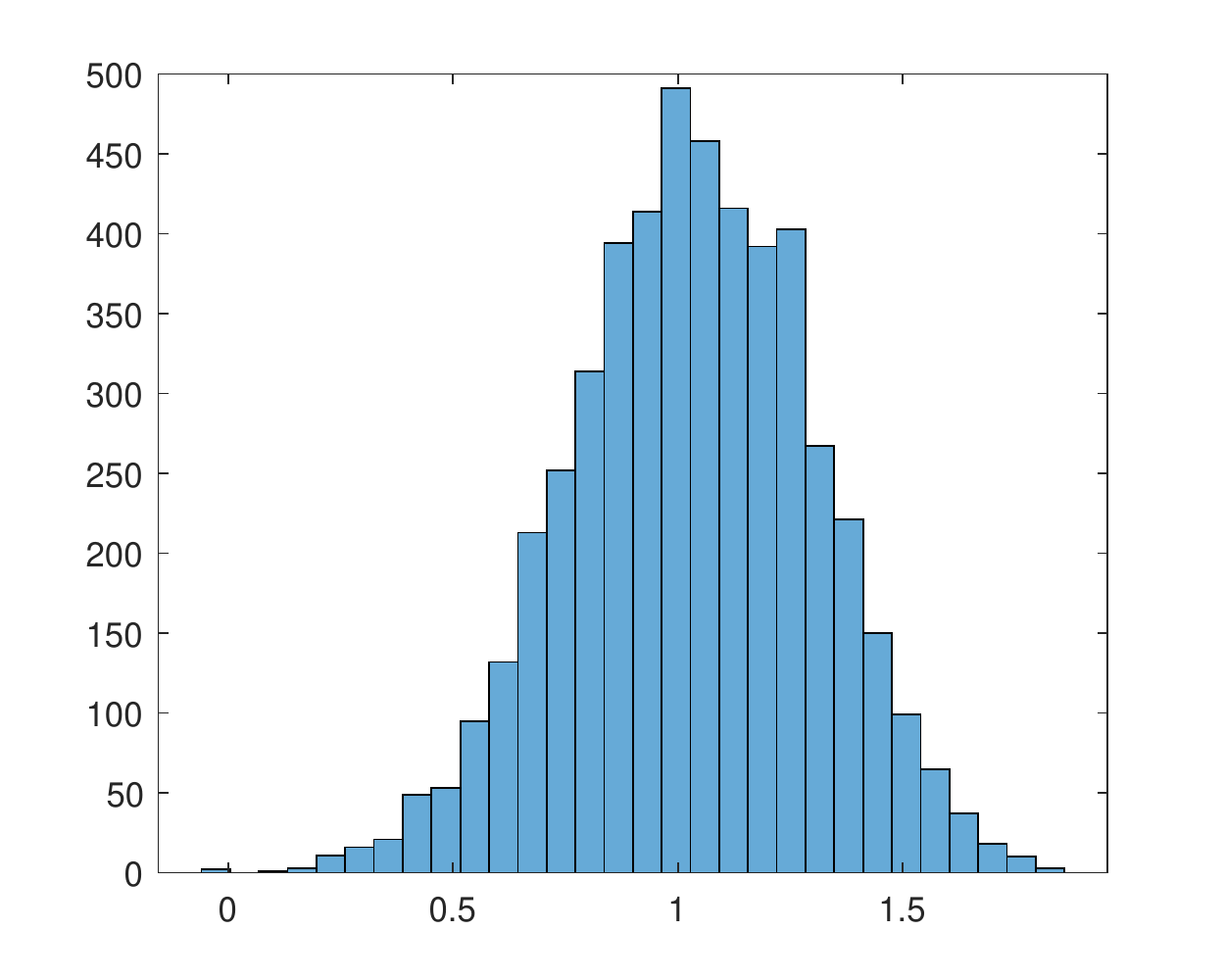}
		\caption{$n = 200$}
	\end{subfigure}\qquad
	\begin{subfigure}[b]{0.3\textwidth}
		\includegraphics[width=\textwidth]{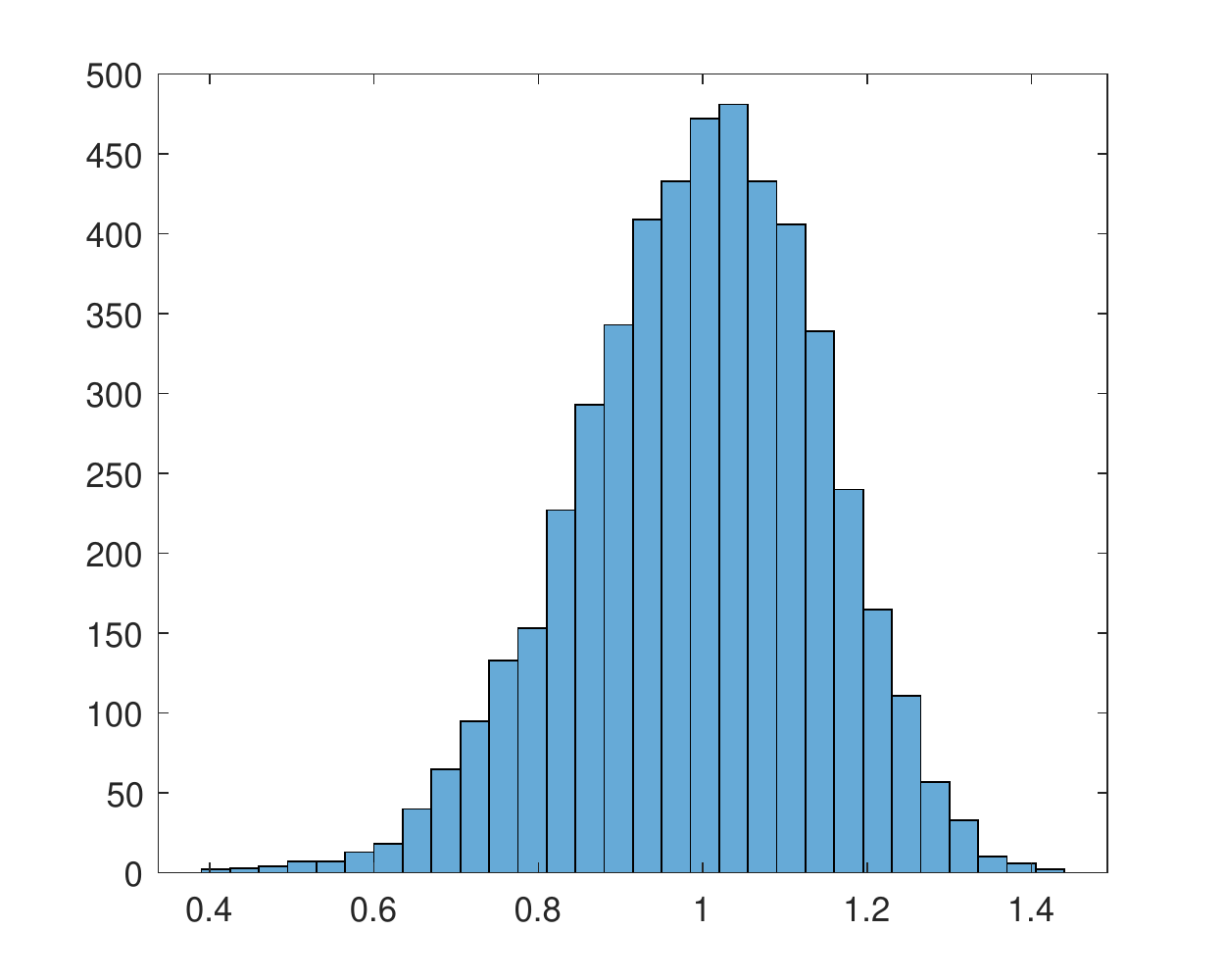}
		\caption{$n = 500$}
	\end{subfigure}\quad
	\begin{subfigure}[b]{0.3\textwidth}
		\includegraphics[width=\textwidth]{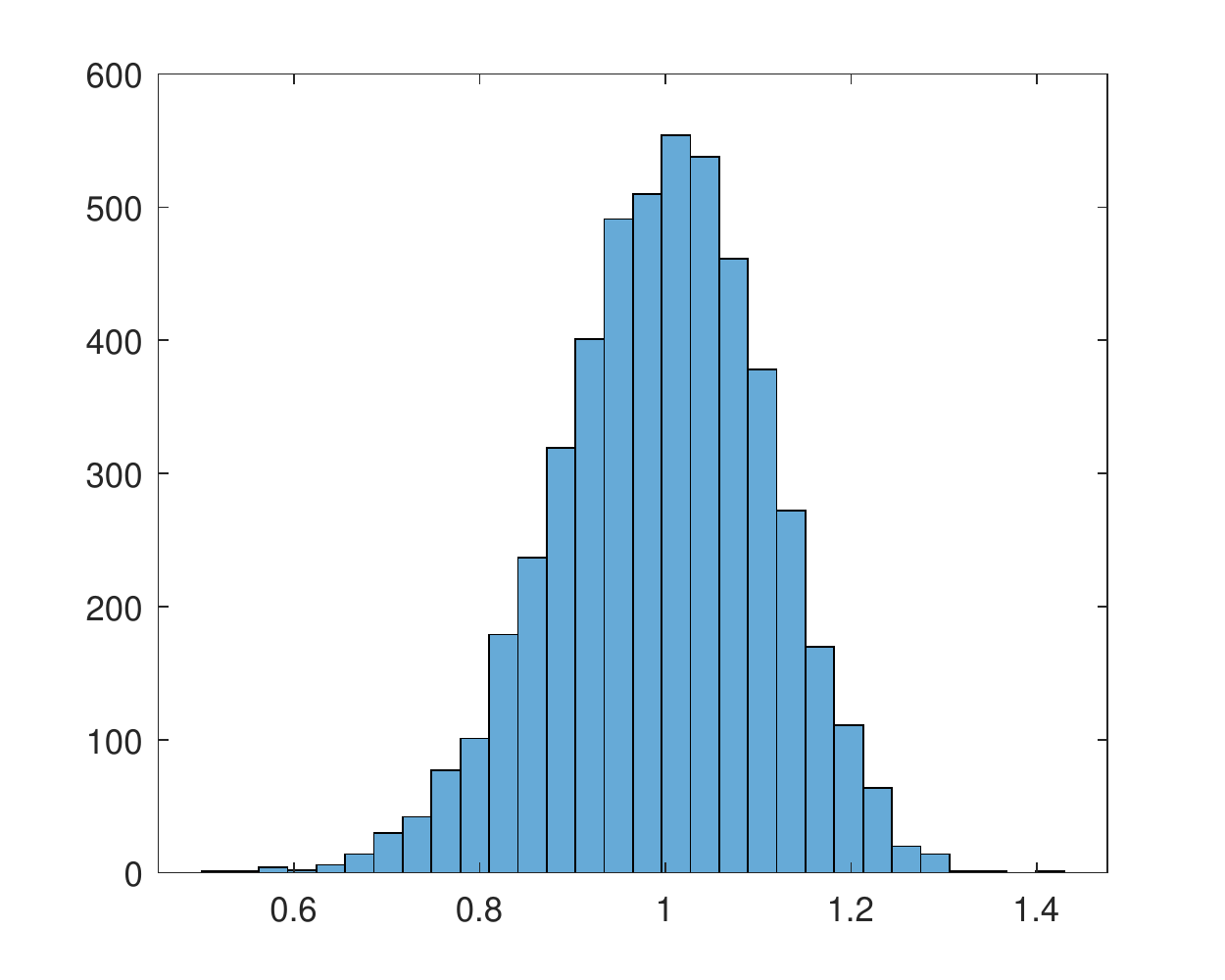}
		\caption{$n = 1000$}
	\end{subfigure}
	\caption{Finite sample distribution of the iRDD estimator: homoskedastic design in (a)-(c) and heteroskedastic in (d)-(f). Results based on 5,000 MC experiments.}
	\label{fig:mc_distr}
\end{figure}

Table~\ref{tab:mse} report results of more comprehensive Monte Carlo experiments for several data-generating processes and shows the exact finite-sample bias, variance, and MSE of our iRDD estimator. We consider the following variations of the baseline DGP with two different functional forms and different amount of density near the cutoff:
\begin{enumerate}
	\item DGP 1 sets $X\sim 2\times\mathrm{Beta}(2,2)-1$ and $m(x)=\exp(0.25x)$;
	\item DGP 2 sets $X\sim 2\times\mathrm{Beta}(0.5,0.5)-1$ (low density) and $m(x)=\exp(0.25x)$;
	\item DGP 3 sets $X\sim 2\times\mathrm{Beta}(2,2)-1$ and $m(x)=x^3+0.25x$;
	\item DGP 4 sets $X\sim 2\times\mathrm{Beta}(0.5,0.5)-1$ (low density) and $m(x)=x^3+0.25x$;
\end{enumerate}
To see the benefits of shape restrictions in small samples, we benchmark our iRDD estimators with the unrestricted local polynomial\footnote{We rely on the implementation available in the {\vtext} R package; see \cite{calonico2014robust}, \cite{calonico2019regression}, and \cite{calonico2020optimal} for details.} and the k-nearest neighbors estimators.\footnote{Note that the iRDD estimator with the boundary correction $\lfloor n^{1/3}\rfloor$ might resemble the k-nearest neighbors estimator with $k_n = \lfloor n^{1/3}\rfloor$ neighbors. However, such comparison is deceptive because the k-nearest neighbors estimator with $\lfloor n^{1/3}\rfloor$ neighbors is restricted to take a constant value in the \textit{entire} neighborhood of the boundary, while the isotonic regression estimator is \textit{unrestricted}.} Results of experiments are consistent with the asymptotic theory. The bias, the variance, and the MSE decrease with the sample size. As expected, the MSE is higher when the density near the cutoff is lower. The shape-constrained iRDD estimator outperforms the unrestricted estimators. In many cases, we achieve more than 5-10 fold reduction of the MSE. In the Supplementary Material, Section~\ref{app:mc}, we also consider DGPs, where the regression function violates the monotonicity constraint and regressions that are steep at the cutoff. The results are still favorable towards the iRDD estimator. 

We shall acknowledge that the local linear and the k-NN estimators are applicable in more general settings as they do not rely on the monotonicity assumption and we do not recommend using the iRDD estimator for non-monotone designs.\footnote{The monotonicity of regression is a testable; see \cite{chetverikov2019testing} and references therein.} However, under the monotonicity, our estimator adapts well to the boundary and seems to outperforms the existing methods. We shall also note that the case of the steep regression function is difficult for all existing approaches and that it becomes difficult to distinguish between the discontinuous and the steep continuous regression functions; see \cite{bertanha2020regression} for related impossibility results.

\begin{table}
	\centering                     	
	\begin{threeparttable}
		\caption{Results of Monte Carlo experiments}
		\begin{tabular}{ccccccccccc}                                                                                                                                                     
			\hline\hline && \multicolumn{3}{c}{iRDD} &  \multicolumn{3}{c}{LP} &  \multicolumn{3}{c}{k-NN} \\ 		\cline{3-5} \cline{6-11}			 &$n$ & Bias & Var & MSE & Bias & Var & MSE & Bias & Var & MSE \\
			\hline\textbf{DGP 1} & 200 & 0.001 & 0.043 & 0.043&0.008& 0.319& 0.319 & 0.010 & 0.500 & 0.500 \\                                                                                                                                     
			& 500 & -0.009 & 0.022 & 0.022& -0.007 & 0.110 & 0.110& 0.010 & 0.280 & 0.280 \\                                                                                                                                    
			&1000  & -0.008 & 0.013 & 0.013& -0.002 & 0.056 & 0.056& 0.000 & 0.220 & 0.220 \\                                                                                                                                   
			\hline\textbf{DGP 2} & 200 & -0.111 & 0.080 & 0.092&0.003& 0.747& 0.747 &  0.040 & 0.500 & 0.500 \\                                                                                                                                    
			& 500 & -0.079 & 0.042 & 0.049& -0.002 & 0.225 & 0.225& 0.010 & 0.290 & 0.290 \\                                                                                                                                    
			& 1000 & -0.063 & 0.027 & 0.031& 0.001 & 0.102 & 0.102& 0.000 & 0.220 & 0.220 \\                                                                                                                                   
			\hline\textbf{DGP 3} & 200 & -0.131 & 0.041 & 0.057&-0.002& 0.322& 0.322 & 0.030 & 0.510 & 0.510 \\                                                                                                                                    
			& 500 & -0.114 & 0.019 & 0.031& -0.005 & 0.109 & 0.109& 0.010 & 0.290 & 0.290 \\                                                                                                                                    
			& 1000 & -0.098 & 0.011 & 0.020& 0.001 & 0.056 & 0.056&  0.010 & 0.220 & 0.220  \\                                                                                                                                   
			\hline\textbf{DGP 4} & 200 & -0.337 & 0.075 & 0.158&-0.019& 0.773& 0.774 & 0.050 & 0.490 & 0.490 \\                                                                                                                                    
			& 500 & -0.261 & 0.037 & 0.087& -0.021 & 0.213 & 0.213&  0.010 & 0.290 & 0.290 \\                                                                                                                                    
			& 1000 & -0.213 & 0.021 & 0.055& -0.007 & 0.104 & 0.104& 0.010 & 0.220 & 0.220 \\                                                                                                          
			\hline\hline                                                                      
		\end{tabular}  
		\begin{tablenotes}
			\small
			\item Note: the figure shows exact finite-sample bias, variance, and MSE of iRDD, local polynomial (LP), and k-NN estimators. Results based on 5000 experiments.  Local linear estimator with kernel=`triangular' and bwselect=`mserd'.
		\end{tablenotes}
		\label{tab:mse}
	\end{threeparttable}
\end{table} 	

We asses the validity of the trimmed wild bootstrap approximations, In Figure~\ref{fig:mc_bootstrap}, we plot the exact distribution  $n^{1/4}(\hat\theta - \theta)$ and the bootstrap distribution $n^{1/4}(\hat\theta^* - \hat\theta)$ for samples of size $n\in\{200,1000\}$. Both estimators are computed using $a=1/2$ and $c=1$. As we can see from panels (b) and (e), the naive wild bootstrap without trimming and boundary correction does not work. On the other hand, the trimmed wild bootstrap mimics the finite-sample distribution. In our simulations, we use Rademacher multipliers for the bootstrap in all our experiments, i.e., $\eta_i\in\{-1,1\}$ with equal probabilities.

Lastly, we provide results of more detailed Monte Carlo experiments in the Supplementary Material. These experiments feature DGPs with heteroskedasticity, steep regression function near the cutoff, and regressions that are non-monotone away from the cutoff. The results show 2-10 fold reduction in the MSE across specifications.

\begin{figure}[ht]
	\begin{subfigure}[b]{0.3\textwidth}
		\includegraphics[width=\textwidth]{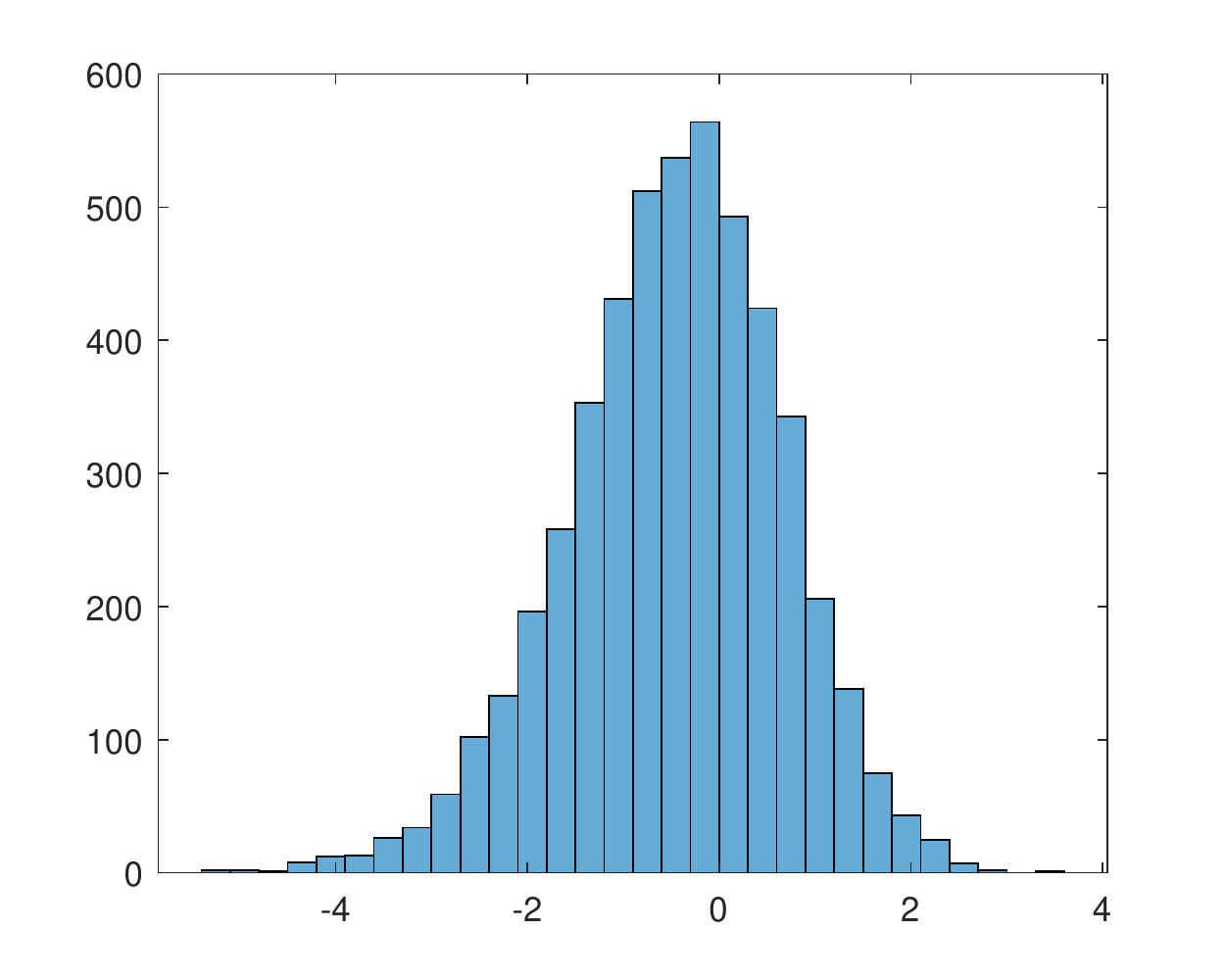}
		\caption{Exact distribution}
	\end{subfigure}
	\begin{subfigure}[b]{0.3\textwidth}
		\includegraphics[width=\textwidth]{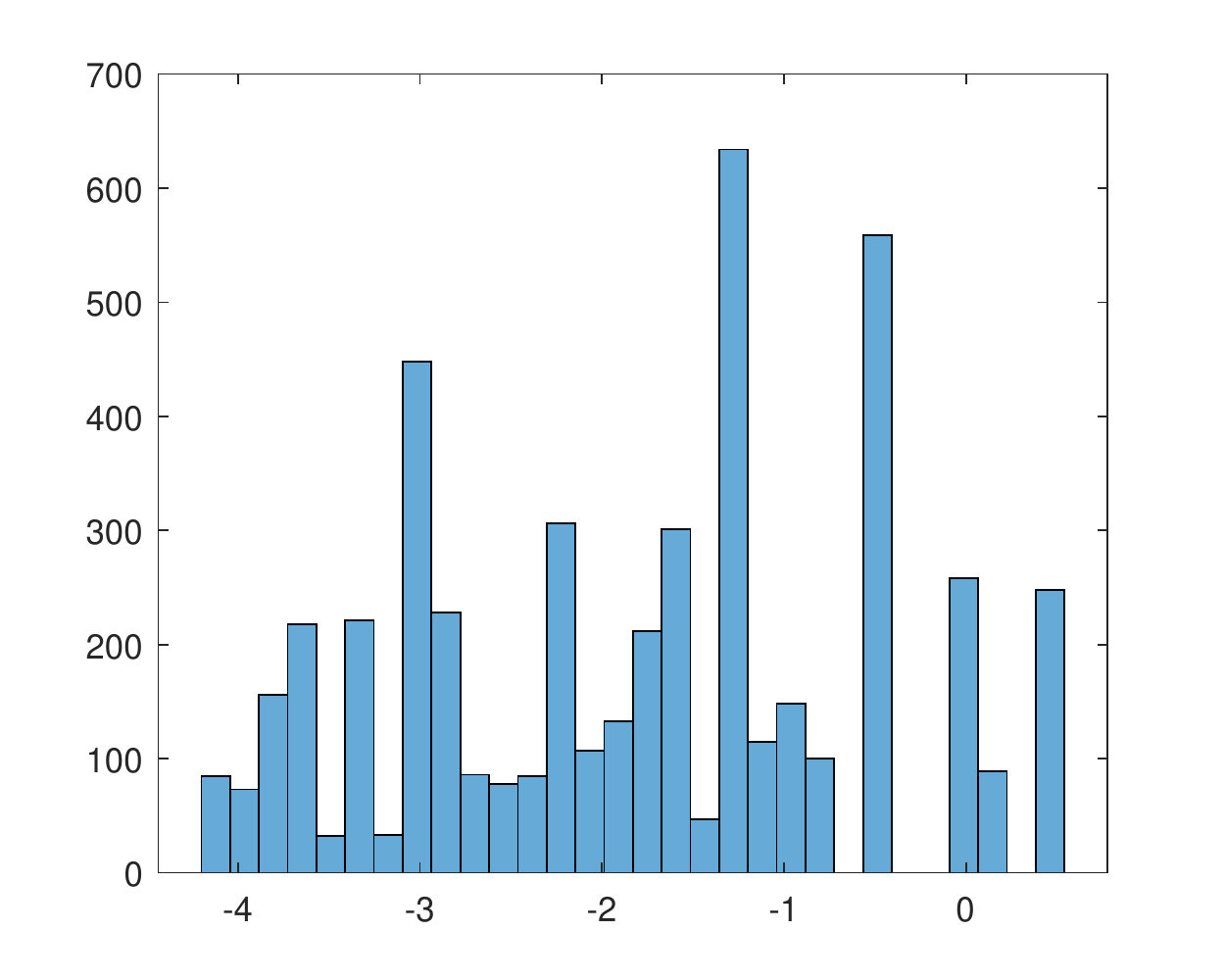}
		\caption{Naive bootstrap}
	\end{subfigure}
	\begin{subfigure}[b]{0.3\textwidth}
		\includegraphics[width=\textwidth]{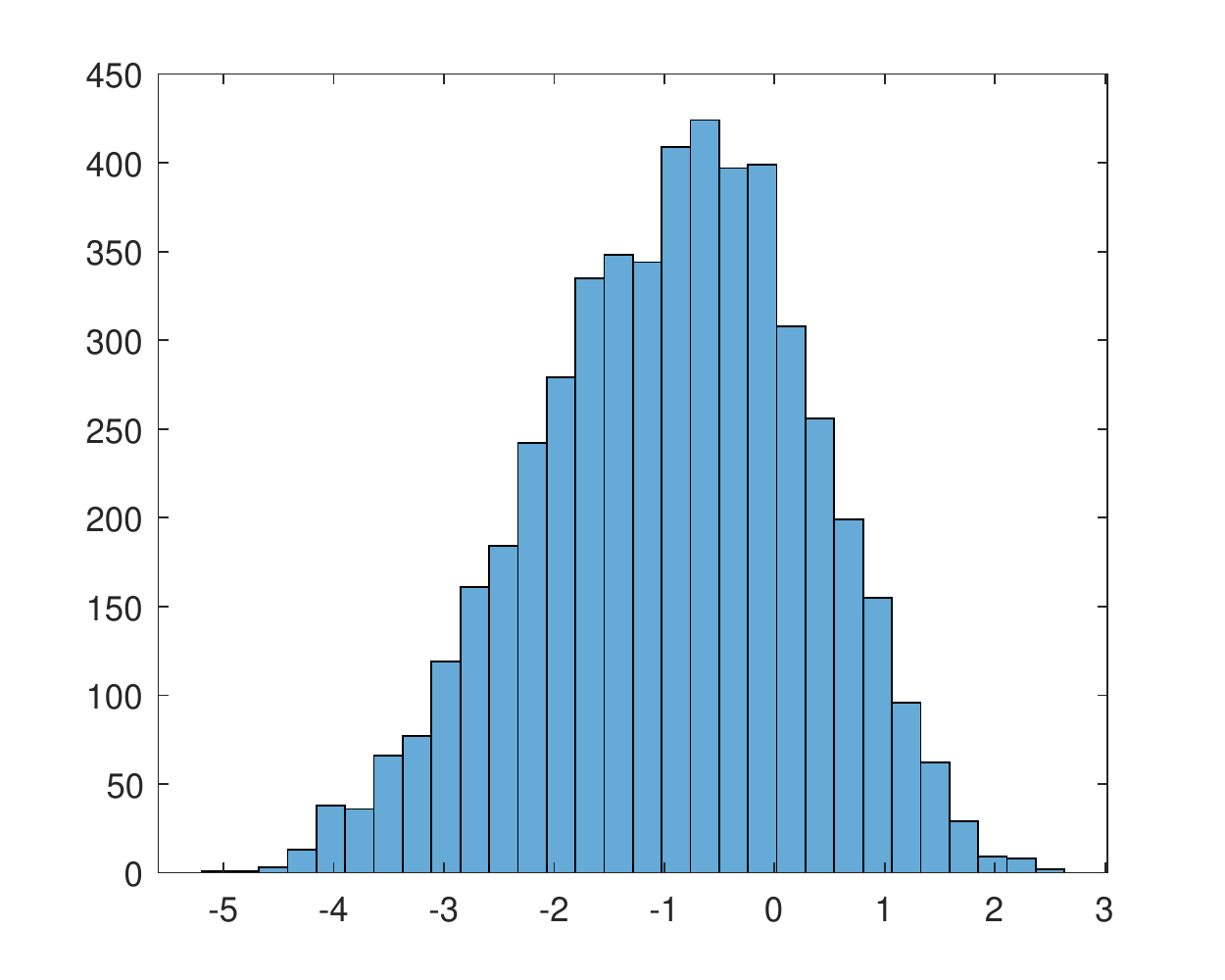}
		\caption{Trimmed bootstrap}
	\end{subfigure}
	\begin{subfigure}[b]{0.3\textwidth}
		\includegraphics[width=\textwidth]{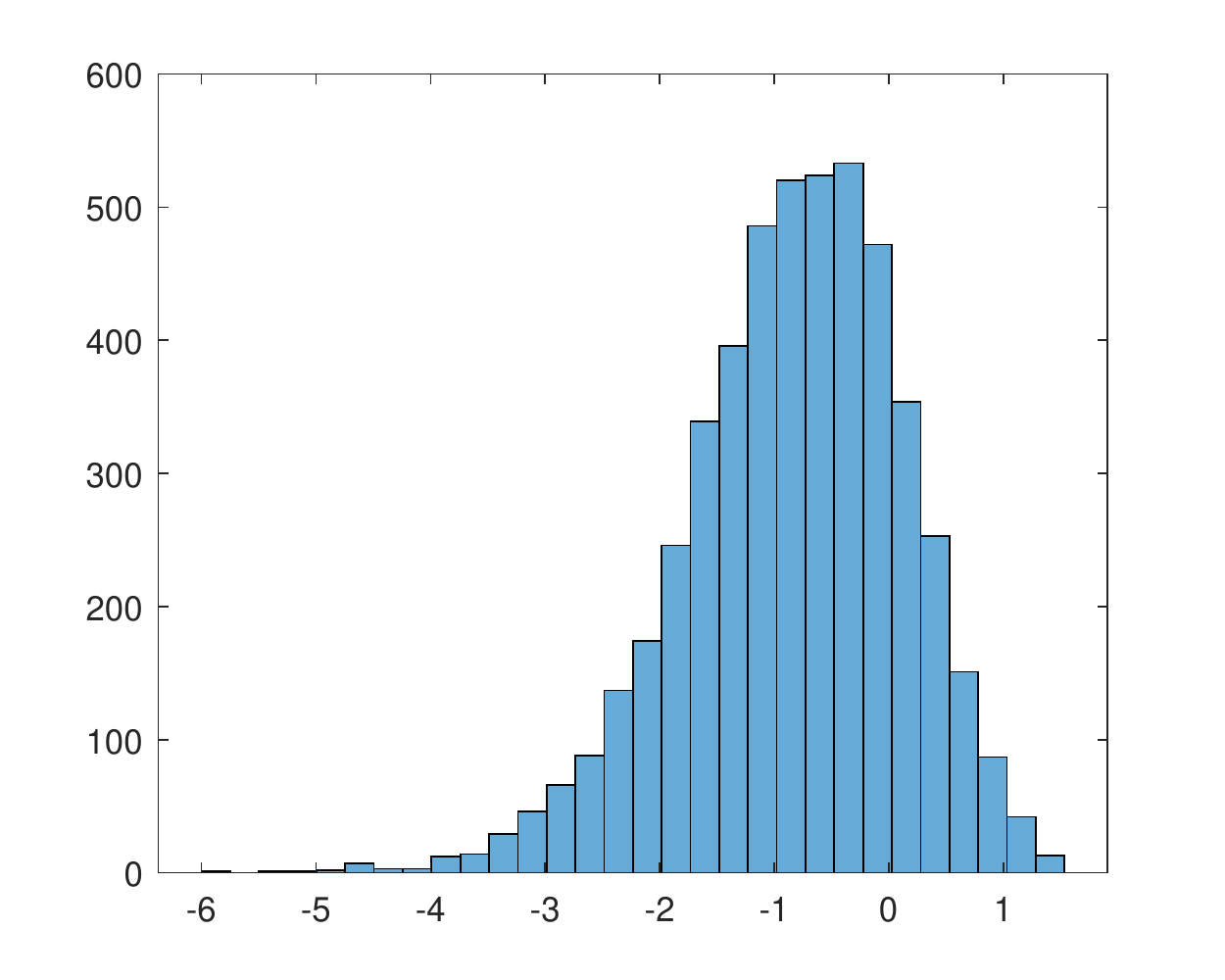}
		\caption{Exact distribution}
	\end{subfigure}\qquad
	\begin{subfigure}[b]{0.3\textwidth}
		\includegraphics[width=\textwidth]{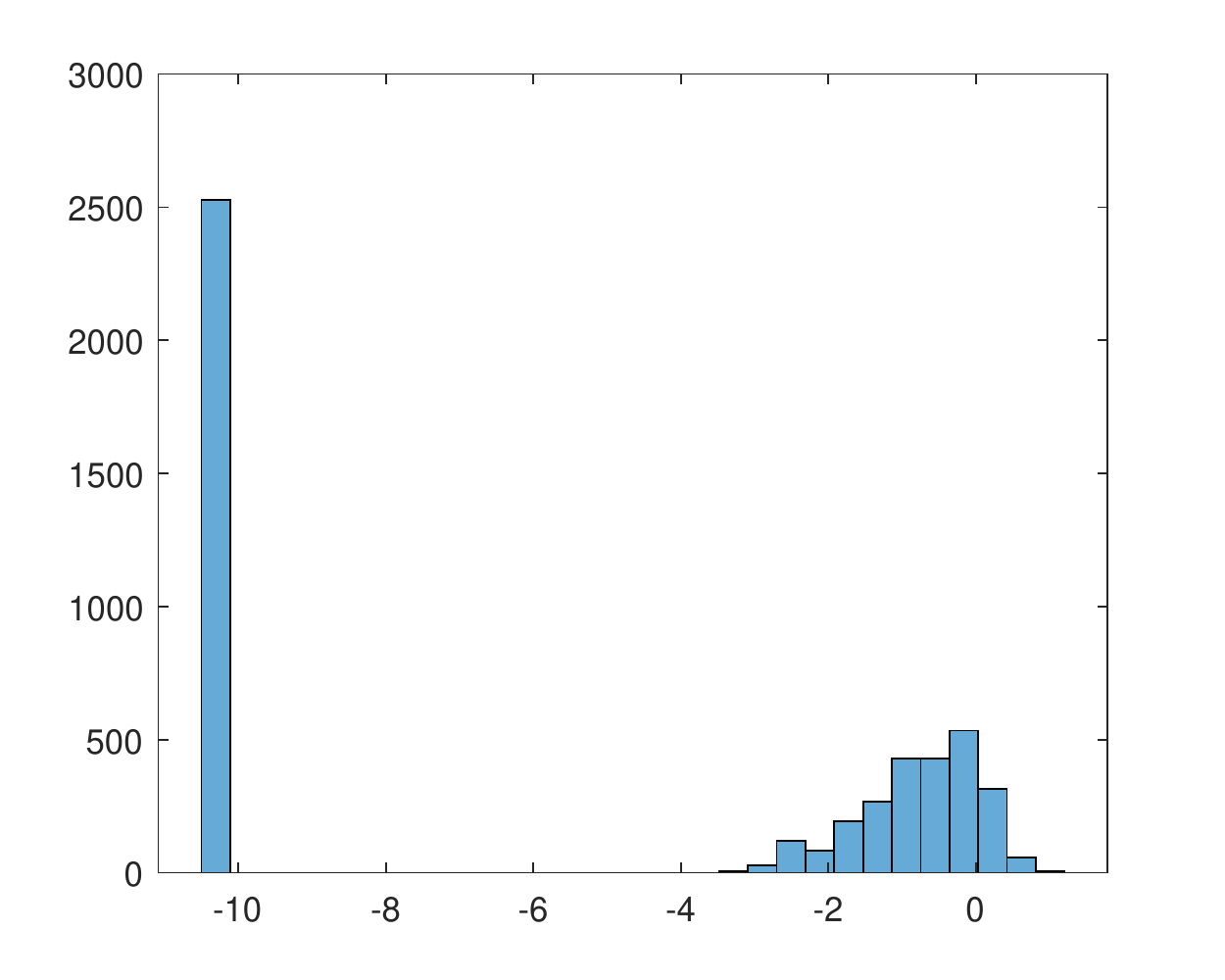}
		\caption{Naive bootstrap}
	\end{subfigure}\quad
	\begin{subfigure}[b]{0.3\textwidth}
		\includegraphics[width=\textwidth]{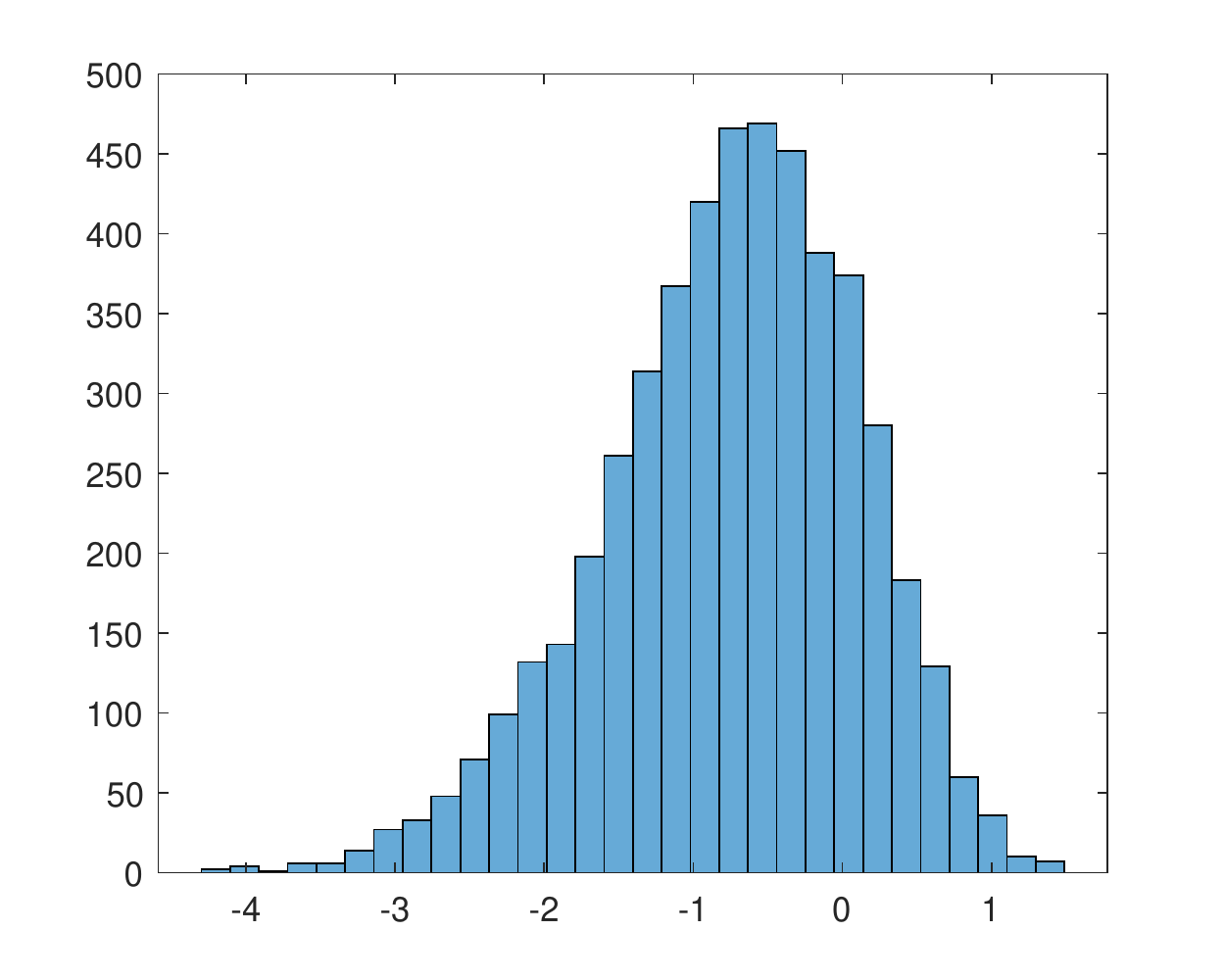}
		\caption{Trimmed bootstrap}
	\end{subfigure}
	\caption{Finite sample distribution and the bootstrap distribution. Sample size: $n=200$ in panels (a)-(c) and $n=1,000$ in panels (d)-(f). 5,000 MC experiments.}
	\label{fig:mc_bootstrap}
\end{figure}

%\begin{table}[http]              
%	\centering
%	\begin{tabular}{cccc|cc|cc|cc}                                                             \hline\hline	
%		&&  \multicolumn{2}{c}{\textbf{DGP 1}}&  \multicolumn{2}{c}{\textbf{DGP 2}}&  \multicolumn{2}{c}{\textbf{DGP 3}}&  \multicolumn{2}{c}{\textbf{DGP 4}} \\	\cline{3-5} \cline{6-8}	\cline{9-10}		 $n$&$c$ & Coverage & Length & Coverage & Length & Coverage & Length & Coverage & Length \\ 
%		\hline           
%		&1.0&0.9096&    0.8974&0.8892&    1.2416&0.9034&    0.9290&0.8856  &  1.3104\\  
%		&2.0&0.9252&    0.7217&0.9036 &   0.9801&0.9174 &   0.7598&0.9146 &   1.0706 \\ 
%		500	&3.0& 0.9378&    0.6471&0.9026&    0.8538&0.9246&    0.6929&  0.9130 &   0.9622 \\  
%		&4.0&0.9464 &   0.6079 &0.9028&    0.7823&0.9372 &   0.6568&0.9198 &   0.8983 \\ 
%		&5.0& 0.9301&    0.5851&0.8992&    0.7305&0.9222 &   0.6420&0.9168 &   0.8634 \\                                         % \hline
%		&1.0&0.9094 &   0.7632&0.8818&    1.0777&0.9128&    0.7864&0.8948&    1.1339\\  
%		&2.0&0.9256 &   0.6029& 0.9074&    0.8428&0.9238 &   0.6330&0.9130&    0.9061\\ 
%		1000	&3.0&0.9292 &   0.5360&0.9120&    0.7363&0.9316&    0.5703&0.9262&    0.8076  \\
%		&4.0&0.9456 &   0.4973& 0.9206&    0.6686& 0.9402&    0.5362&0.9390&    0.7542\\ 
%		&5.0&0.9334 &   0.4717& 0.9124&    0.6253& 0.9372&    0.5153&0.9346 &   0.7183 \\                                  \hline		\hline                                 
%\end{tabular}                              \caption{Coverage and length. 5000 experiments.}              
%	\label{tab:coverage}
%\end{table} 
\section{Empirical illustration}\label{sec:empirical_application}
Do incumbents have any electoral advantage? An extensive literature, going back at least to \cite{cummings1966}, aims to answer this question. Estimating the causal effect is elusive because incumbents, by definition, are more successful politicians. Using the regression discontinuity design, \cite{lee2008randomized}, documents that for the U.S. Congressional elections during 1946-1998, the incumbency advantage is 7.7\% of the votes share on the next election. The design is plausibly monotone, since we do not expect that candidates with a larger margin should have a smaller vote share on the next election, at least on average. The unconstrained regression estimates presented by \cite{lee2008randomized} also support the monotonicity empirically.

We revise the main finding of \cite{lee2008randomized} with our sharp iRDD estimator. The dataset is publicly available as a companion for the book by \cite{angrist2008mostly}. We use the rule-of-thumb choices of tuning parameters, $c=1$, $a=1/3$ (point estimation), and $a=1/2$ (inference). Figure~\ref{fig:incumbency} presents the isotonic regression estimates\footnote{We use the piecewise-constant interpolation, but a higher-order polynomial interpolation is another alternative that would produce visually more appealing estimates.} of the average vote share for the Democratic party at next elections as a function of the vote share margin at the previous election (left panel). There is a pronounced jump in average outcomes for Democrats who barely win the election, compared to the results for the penultimate election (right panel). We find the point estimate of 13.8\% with the 95\% confidence interval $[6.6\%,26.5\%]$. While we reject the hypothesis that the incumbency advantage did not exist, our confidence intervals give a wider range of estimates. Our confidence interval may be conservative if the underlying regression is two times differentiable, however, it is robust to the failure of this assumption as well as to the inference after the model selection issues.

Of course, different approaches work differently in finite samples and it is hard to have a definite comparison. \cite{lee2008randomized} estimates the causal effect by fitting parametric regressions with the global fourth-degree polynomial, which might be unstable at the boundary; see \cite{gelman2019high}. We, on the other hand, rely on the nonparametric boundary-corrected isotonic regression. Interestingly, the local linear estimator with the data-driven bandwidth parameter as implemented in the STATA package {\vtext} estimates the causal effect of 6.6\%. 

We also compute iRDD estimates using isotonic regressions without the boundary correction, and evaluating the regression function at the most extreme to the boundary observations. With this approach, we obtain point estimates of 6.6\%. As a consequence of the monotonicity, the isotonic regression estimator is biased upwards to the left of the cutoff and downwards to the right of the cutoff; see the Appendix, Section~\ref{sec:appendix_inconsistency}. Therefore, our iRDD estimators without the boundary correction provide a lower bound on the estimated causal effect, which might be of significant interest given that it is obtained completely tuning-free.

\begin{figure}[ht]
	\begin{subfigure}[b]{0.5\textwidth}
		\includegraphics[width=\textwidth]{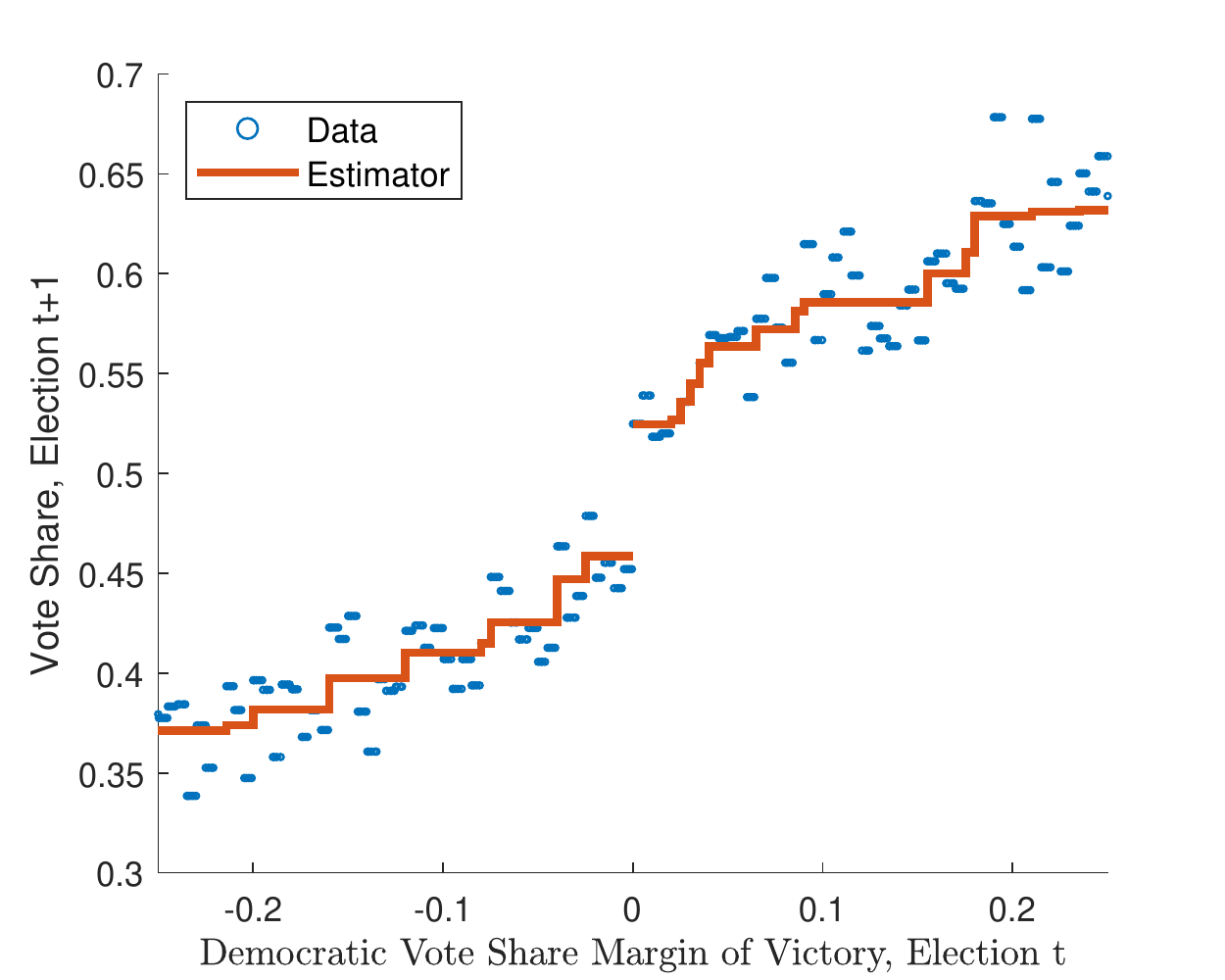}
	\end{subfigure}
	\begin{subfigure}[b]{0.5\textwidth}
		\includegraphics[width=\textwidth]{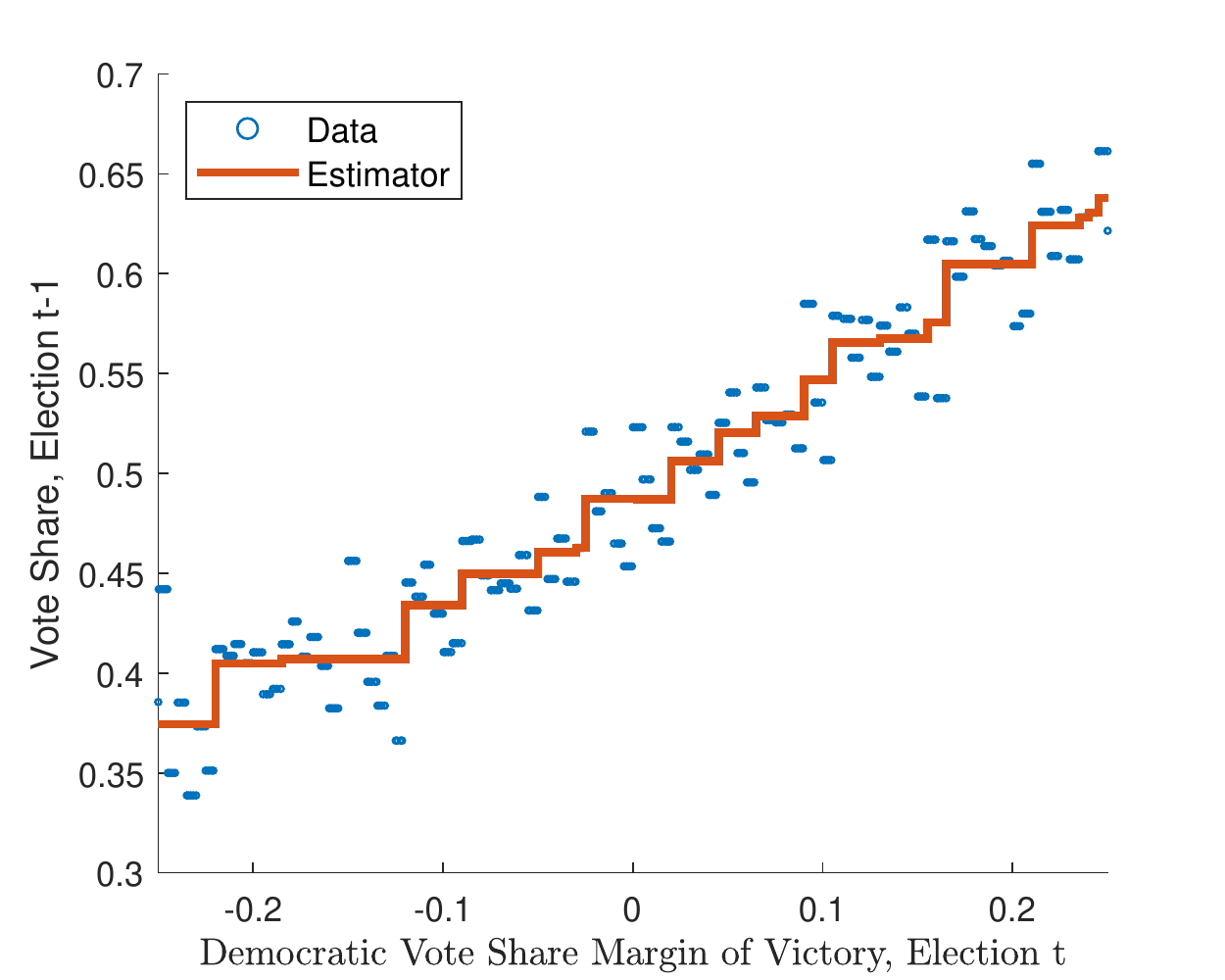}
	\end{subfigure}
	\caption{Incumbency advantage. Sample size: 6,559 observations with 3,819 observations below the cutoff.}
	\label{fig:incumbency}
\end{figure}

\section{Conclusion}\label{sec:conclusion}
This paper studies the isotonic regression estimator at the boundary of its support, an object that is particularly interesting and required in the analysis of monotone regression discontinuity designs. To the best of our knowledge, our paper is the first to develop the asymptotic theory for the isotonic regression at the boundary point. Building on these results, we offer a new perspective on monotone regression discontinuity designs and contribute more broadly to the growing literature on nonparametric identification, estimation, and inference under shape restrictions. 

While the present paper focuses on the isotonic regression discontinuity designs in the classical sharp and fuzzy setting, it also opens several directions for future research. First, since various functional relations in economics are expected to be monotone, the isotonic regression might be of interest in other applications beyond the regression discontinuity designs. Second, it could be interesting to investigate how the monotonicity and other shape restrictions can be incorporated in other regression discontinuity designs; see \cite{cattaneo2017} for a review. Designing the coverage-optimal choices of the boundary correction is another important point that should be addressed in the future research. Finally, in the large-sample approximations that we use for the inference, we do not assume the existence of derivatives and rely instead on the weaker one-sided H\"{o}lder continuity condition. This indicates that our results might be honest to the relevant H\"{o}lder class, but additional study is needed to confirm this conjecture. 

\section*{Acknowledgements}
We thank the Editor, the Associate Editor, and anonymous referees for comments that helped us to improve significantly the paper. We are also grateful to Alex Belloni, Federico Bugni, Matias Cattaneo, Xiaohong Chen, Peter Hansen, Jonathan Hill, Jia Li, Matt Masten, Andrew Patton, Andres Santos, Valentin Verdier, Jon Wellner, and other participants of various seminars and conferences for helpful comments and conversations. A special thanks goes to Kenichi Nagasawa for his insightful comments on the earlier draft. All remaining errors are ours.

\bibliography{references}

\newpage
\setcounter{page}{1}
\setcounter{section}{0}
\setcounter{equation}{0}
\setcounter{table}{0}
\setcounter{figure}{0}
\renewcommand{\theequation}{A.\arabic{equation}}
\renewcommand\thetable{A.\arabic{table}}
\renewcommand\thefigure{A.\arabic{figure}}
\renewcommand\thesection{A.\arabic{section}}
\renewcommand\thepage{Appendix - \arabic{page}}
\renewcommand\thetheorem{A.\arabic{theorem}}

\begin{center}
	{\LARGE\textbf{APPENDIX A}}	
\end{center}
\bigskip

\section{Inconsistency at the boundary}\label{sec:appendix_inconsistency}
Put
\begin{equation*}
F_n(t) = \frac{1}{n}\sum_{i=1}^n\one\{X_i\leq t\}\qquad \text{and}\qquad M_n(t) = \frac{1}{n}\sum_{i=1}^nY_i\one\{X_i\leq t\}.
\end{equation*}

By \cite{barlow1972statistical}, Theorem 1.1, $\hat m(x)$ is the left derivative of the greatest convex minorant of the cumulative sum diagram
\begin{equation*}
t\mapsto (F_n(t),M_n(t)),\qquad t\in[0,1]
\end{equation*}
at $t=x$; see Figure~\ref{fig:switch}. The natural estimator of $m(0)$ is $\hat m(X_{(1)})$, which corresponds to the slope of the first-segment
\begin{equation*}
\hat m(X_{(1)}) = \min_{1\leq i\leq n}\frac{1}{i}\sum_{j=1}^iY_{(j)},
\end{equation*}
where $X_{(j)}$ is the $j^{th}$ order statistics and $Y_{(j)}$ is the corresponding observation of the dependent variable.

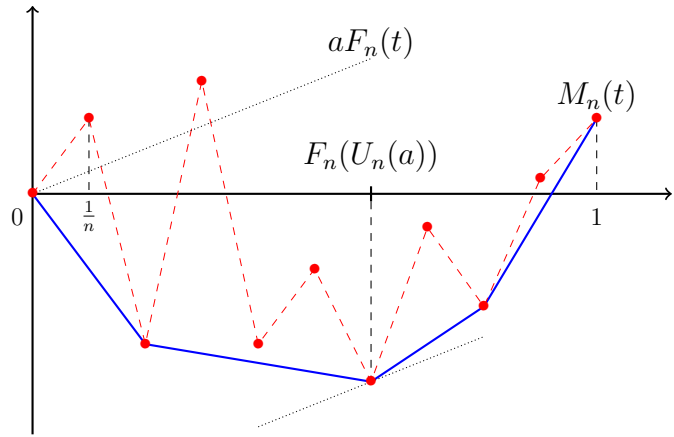
\begin{figure}[hb]
	\centering
	\begin{tikzpicture}
	\draw[thick,->] (0,0.8) -- (0,6.5);
	\draw[thick,->] (0,4) -- (8.5,4);
	\draw[thick, blue] (0,4) -- (1.5,2) -- (3,1.75) -- (4.5,1.5) -- (6,2.5) --  (7.5,5);
	\draw[line width=0.01mm, red,dashed] (0,4) -- (.75,5) -- (1.5,2) -- (2.25,5.5) -- (3,2) -- (3.75,3) -- (4.5,1.5)--(5.25,3.55)--(6,2.5)--(6.75,4.2)--(7.5,5);
	\draw [densely dotted] (3,.9) -- (4.5,1.5)--(6,2.1);
	\draw [thick] (4.5,4.1) -- (4.5,3.9);
	\draw [densely dotted] (0,4) -- (1.5,4.6)--(3,5.2)--(4.5,5.8);
	\draw [dashed] (4.5,3.9) -- (4.5,1.5);
	\draw [dashed] (7.5,5) -- (7.5,4);
	\draw [dashed] (.75,5) -- (0.75,4);
	\foreach \Point in {(0,4), (.75,5), (1.5,2), (2.25,5.5), (3,2), (3.75,3), (4.5,1.5), (5.25,3.55), (6,2.5), (6.75,4.2), (7.5,5)}{
		\node[red, scale=0.8] at \Point {\textbullet};
	}
	\node at (7.5,5.3) {$M_n(t)$};
	\node[scale=0.8] at (7.5,3.7) {$1$};
	\node[scale=0.8] at (0.75,3.7) {$\frac{1}{n}$};
	\node[scale=0.8] at (-0.2,3.7) {$0$};
	\node at (4.5,6) {$aF_n(t)$};
	\node at (4.5,4.5) { $F_n(U_n(a))$};
	\end{tikzpicture}
	\caption{If $\hat m(x)\leq a$, then the largest distance between the line with slope $a$ going through the origin and the greatest convex minorant (broken blue line) of the cumulative sum diagram $t\mapsto (F_n(t),M_n(t))$ (red dots) will be achieved at some point to the right of $F_n(x)$.}
	\label{fig:switch}
\end{figure}

The following result shows the isotonic regression estimator is can be inconsistent at the boundary.\footnote{We are grateful to the anonymous referee for suggesting how to improve the statement of this result.} We believe that the exact asymptotic behavior of $\hat m(X_{(1)})$ is an interesting problem to study on its own; see \cite{balabdaoui2011grenander} for such analysis in the case of the Grenander estimator. 
\begin{theorem}\label{thm:inconsistency}
	Suppose that the conditional distribution $\varepsilon|X=0$ is not degenerate and that $x\mapsto \Pr(Y\leq y|X=x)$ is continuous for every $y$. Then there exists $\epsilon>0$ such that
	\begin{equation*}
	\liminf_{n\to\infty}\Pr(|\hat m(X_{(1)}) - m(0)|>\epsilon) >0.
	\end{equation*}
\end{theorem}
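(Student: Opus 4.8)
The plan is to leverage the elementary fact, recalled in the excerpt, that $\hat m(X_{(1)})$ is the slope of the first segment of the greatest convex minorant of the cumulative sum diagram, which gives the deterministic bound
\[
\hat m(X_{(1)}) \;=\; \min_{1\le i\le n}\frac1i\sum_{j=1}^{i}Y_{(j)} \;\le\; Y_{(1)} .
\]
Since therefore $\{Y_{(1)}\le c\}\subseteq\{\hat m(X_{(1)})\le c\}$ for every $c$, it will suffice to produce $\epsilon>0$ with $\liminf_n\Pr\!\big(Y_{(1)}\le m(0)-\epsilon\big)>0$; this is also the source of the downward bias at the left boundary (and, via the mirror bound $\hat m(X_{(n)})\ge Y_{(n)}$, of the upward bias at the right one, mentioned in Section~\ref{sec:appendix_inconsistency}).

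Next I would identify the limiting law of $Y_{(1)}$. For an i.i.d.\ sample with $X$ continuously distributed, conditionally on the order statistics $X_{(1)}<\dots<X_{(n)}$ the outcomes $Y_{(i)}$ are independent with $Y_{(i)}$ distributed as $Y\mid X=X_{(i)}$; writing $G_x(y)=\Pr(Y\le y\mid X=x)$ this yields $\Pr(Y_{(1)}\le y)=\E\big[G_{X_{(1)}}(y)\big]$. Because $0$ is the left endpoint of the support of $X$ we have $X_{(1)}\to 0$ a.s., so the hypothesised continuity of $x\mapsto G_x(y)$ gives $G_{X_{(1)}}(y)\to G_0(y)$ a.s., and bounded convergence then gives $\Pr(Y_{(1)}\le y)\to G_0(y)$ for each $y$. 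Equivalently, $Y_{(1)}\cw m(0)+\varepsilon_0$, where $\varepsilon_0$ has the conditional law of $\varepsilon$ given $X=0$.

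Finally I would use $\E[\varepsilon\mid X]=0$, which forces $\E\varepsilon_0=0$; since $\varepsilon_0$ is non-degenerate by hypothesis it cannot be supported on $[0,\infty)$, so $\Pr(\varepsilon_0<0)>0$, and hence there is $\epsilon>0$ with $\delta_0:=\Pr(\varepsilon_0\le -2\epsilon)=G_0\big(m(0)-2\epsilon\big)>0$. Combining the three steps,
\[
\liminf_{n\to\infty}\Pr\!\big(|\hat m(X_{(1)})-m(0)|>\epsilon\big)\;\ge\;\liminf_{n\to\infty}\Pr\!\big(Y_{(1)}\le m(0)-2\epsilon\big)\;=\;\delta_0\;>\;0 ,
\]
which is the assertion.

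The step I expect to require the most care is the middle one --- showing $Y_{(1)}\cw Y\mid X=0$ --- and the continuity assumption on $x\mapsto\Pr(Y\le y\mid X=x)$ is exactly what makes it go through, by excluding a jump in the conditional law of $Y$ at the boundary that would otherwise shift the limit. Everything else reduces to the min-of-partial-averages representation of $\hat m(X_{(1)})$ and to the mean-zero and non-degeneracy assumptions. The argument also makes transparent the mechanism flagged in the introduction: $X_{(1)}$ tends to $0$ too quickly for any local averaging to take place near the boundary, so the noise $\varepsilon_0$ of the single closest observation persists asymptotically.
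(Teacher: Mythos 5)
Your proof is correct and follows essentially the same route as the paper's: bound $\hat m(X_{(1)})=\min_{1\le i\le n}\frac1i\sum_{j\le i}Y_{(j)}$ above by $Y_{(1)}$, show $Y_{(1)}$ converges in law to $Y\mid X=0$ via the continuity of $x\mapsto\Pr(Y\le y\mid X=x)$ and $X_{(1)}\to 0$, and use mean-zero plus non-degeneracy of $\varepsilon\mid X=0$ to get positive limiting mass below $m(0)-\epsilon$. The only cosmetic difference is how that last positivity is extracted (you argue directly that a mean-zero non-degenerate variable puts mass on $(-\infty,-2\epsilon]$ for small $\epsilon$, while the paper passes through the truncated first moment), which is immaterial.
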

\begin{proof}	
	Let $F_{\varepsilon|X}$ be the conditional CDF of $\varepsilon|X$. Note that $\int_{-\infty}^0 e\dx F_{\varepsilon|X}(e|0)<0$ since $\E[\varepsilon|X]=0$ and the distribution $\varepsilon|X=0$ is not degenerate. Then, by continuity $\exists\epsilon>0$ such that $\int_{-\infty}^{-\epsilon}e\dx F_{\varepsilon|X}(e|0)<0$. This shows that $F_{\varepsilon|X}(-\epsilon|0)>0$. 	Next, for any $\epsilon>0$
	\begin{equation*}
	\begin{aligned}
	\Pr(|\hat m(X_{(1)}) - m(0)|>\epsilon) & \geq \Pr\left(\min_{1\leq i\leq n}\frac{1}{i}\sum_{j=1}^iY_{(j)} < m(0)-\epsilon\right) \\
	& \geq \Pr(Y_{(1)} < m(0) - \epsilon) \\
	& = \int \Pr(Y\leq m(0)-\epsilon |X=x)\dx F_{X_{(1)}}(x) \\
	& \to \Pr(Y\leq m(0) - \epsilon|X=0) \\
	& = F_{\varepsilon|X=0}(-\epsilon) \\
	& > 0
	\end{aligned}
	\end{equation*}
	where we use the fact that $X_{(1)}\xrightarrow{d} 0$. Therefore, there exists $\epsilon>0$ such that
	\begin{equation*}
		\liminf_{n\to\infty}\Pr(|\hat m(X_{(1)}) - m(0)|>\epsilon) >0.
	\end{equation*}	
\end{proof}

The exact asymptotic behavior of $\hat m(X_{(1)})$ is not clear, but we know that it will underestimate $m(0)$ in general. To see this, note that
\begin{equation*}
\begin{aligned}
	\Pr\left(\hat m(X_{(1)} ) \leq m(0) \right) & \geq \Pr(\hat m(cn^{-1/3}) \leq m(0)) \\
	& \to 1,
\end{aligned}
\end{equation*}
where the first inequality follows from $X_{(1)}\leq cn^{-1/3}$ for some $c>0$ and monotonicity and the second by the consistency of $\hat m(cn^{-1/3})$; see Theorem~\ref{thm:isotonic_clt} (ii). Therefore, 
\begin{equation*}
	\lim_{n\to\infty}\Pr\left(\hat m(X_{(1)} ) \leq m(0) \right) = 1
\end{equation*}
and for large $n$, $\hat m(X_{(1)})$ will be smaller than $m(0)$ with high probability. Similarly, one can show that $\hat m(X_{(n)})$ overestimates $1$.

\section{Proofs of main results}\label{sec:proofs}
\begin{proof}[Proof of Theorem~\ref{thm:isotonic_clt}]
	By \cite{barlow1972statistical}, Theorem 1.1, $\hat m(x)$ is the left derivative of the greatest convex minorant of the cumulative sum diagram
	\begin{equation*}
	t\mapsto (F_n(t),M_n(t)),\qquad t\in[0,1]
	\end{equation*}
	at $t=x$. This corresponds to the piecewise-constant left-continuous interpolation. Put
	\begin{equation*}
	U_n(a) = \argmax_{s\in[0,1]}\left\{aF_n(s) - M_n(s)\right\}.
	\end{equation*}
	Then for any\footnote{For a continuous function $\Phi:[0,1]\to\R$, we define $\argmax_{t\in[0,1]}\Phi(t) = \max\{s\in[0,1]:\; \Phi(s) = \max_{t\in[0,1]}\Phi(t)\}$ to accomodate non-unique maximizers. Recall that continuous function attains its maximum on compact intervals and its argmax is a closed set with a well-defined maximal element.} $x\in(0,1)$ and $a\in\R$
	\begin{equation}\label{eq:switching}
	\begin{aligned}
	\hat m(x)\leq a & \iff F_n(U_n(a)) \geq F_n(x) \\
	& \iff U_n(a) \geq x,
	\end{aligned}
	\end{equation}
	as can be seen from Figure~\ref{fig:switch}, see also \cite{groeneboom2014nonparametric}, Lemma 3.2.
	
	\paragraph{Case (i): $a\in(0,1/3)$.}	For every $u\in\R$
	\begin{equation*}\footnotesize
	\begin{aligned}
	& \Pr\left(n^{1/3}\left(\hat m\left(cn^{-a}\right) - m(cn^{-a})\right) \leq u\right) \\
	& = \Pr\left(\hat m\left(cn^{-a}\right)\leq n^{-1/3}u + m(cn^{-a})\right) \\
	& = \Pr\left(\argmax_{s\in[0,1]}\left\{\left(n^{-1/3}u + m(cn^{-a})\right)F_n(s) - M_n(s)\right\} \geq cn^{-a}\right) \\
	& = \Pr\left(\argmax_{t\in[-cn^{1/3-a},(1-cn^{-a})n^{1/3}]}\left\{\left(n^{-1/3}u + m(cn^{-a})\right)F_n(tn^{-1/3}+cn^{-a}) - M_n(tn^{-1/3}+cn^{-a})\right\} \geq 0\right),
	\end{aligned}
	\end{equation*}
	where the second equality follows by the switching relation in Eq.~(\ref{eq:switching}) and the last by the change of variables $s\mapsto tn^{-1/3} + cn^{-a}$.
	
	The location of the argmax is the same as the location of the argmax of the following process
	\begin{equation*}
	Z_{n1}(t) \triangleq I_{n1}(t) + II_{n1}(t) + III_{n1}(t)
	\end{equation*}
	due to scale and shift invariance with
	\begin{equation*}
	\begin{aligned}
	I_{n1}(t) & = \sqrt{n}(P_n - P)g_{n,t},\qquad g_{n,t}\in\mathcal{G}_{n1} \\
	II_{n1}(t) & = n^{2/3}\E\left[(m(cn^{-a}) - Y)\left(\one_{[0,tn^{-1/3}+cn^{-a}]}(X) - \one_{[0,cn^{-a}]}(X)\right)\right] \\
	III_{n1}(t) & = n^{1/3}u[F_n(tn^{-1/3}+cn^{-a})-F_n(cn^{-a}) ], \\
	\end{aligned}
	\end{equation*}
	where
	\begin{equation*}
	\mathcal{G}_{n1} = \left\{g_{n,t}(y,x) = n^{1/6}(m(cn^{-a}) - y)\left(\one_{[0,tn^{-1/3} + cn^{-a}]}(x)-\one_{[0,cn^{-a}]}(x)\right):\; t\in[-K,K] \right\}.
	\end{equation*}
	We will show that the process $Z_{n1}$ converges weakly to a non-degenerate Gaussian process in $\ell^\infty[-K,K]$ for every $K<\infty$. 
	
	Under Assumption~\ref{as:dgp} (ii)-(iii) the covariance structure of the process $I_{n1}$ converges pointwise to the one of the two-sided scaled Brownian motion (two independent Brownian motions starting from zero and running in the opposite directions). Indeed, when $s,t\geq0$
	\begin{equation*}
	\begin{aligned}
	\Cov(g_{n,t},g_{n,s}) & = n^{1/3}\E\left[|Y-m(cn^{-a})|^2\one_{[cn^{-a},cn^{-a}+n^{-1/3}(t\wedge s)]}(X) \right] + O(n^{-1/3}) \\
	& = n^{1/3}\E\left[(\varepsilon^2 + |m(X) - m(cn^{-a})|^2)\one_{[cn^{-a},cn^{-a}+n^{-1/3}(t\wedge s)]}(X)\right] + o(1) \\
	& = n^{1/3}\int_{cn^{-a}}^{cn^{-a}+n^{-1/3}(t\wedge s)}\left(\sigma^2(x) + |m(x) - m(cn^{-a})|^2\right)f(x)\dx x + o(1) \\
	& = \left(\sigma^2(\xi_n) + |m(cn^{-a}) - m(\xi_n)|^2\right)f(\xi_n)(t\wedge s) + o(1) \\
	& = \sigma^2(0)f(0)(s\wedge t) + o(1),
	\end{aligned}
	\end{equation*}
	where we use the mean-value theorem for some $\xi_n$ between $cn^{-a}$ and $cn^{-a}+n^{-1/3}(t\wedge s)$. Similarly, it can be shown that
	\begin{equation*}
	\Cov(g_{n,t},g_{n,s}) = \begin{cases}
	\sigma^2(0)f(0)(|s|\wedge |t|) + o(1) & s,t\leq 0 \\
	o(1) & \mathrm{sign}(s)\ne \mathrm{sign}(t).
	\end{cases}
	\end{equation*}
	
	The class $\mathcal{G}_{n1}$ is VC subgraph with VC index 2 and the envelop
	\begin{equation*}
	G_{n1}(y,x) = n^{1/6}|y-m(cn^{-a})|\one_{[cn^{-a}-n^{-1/3}K,cn^{-a}+n^{-1/3}K]}(x),
	\end{equation*}
	which is square integrable
	\begin{equation*}
	\begin{aligned}
	\E G_{n1}^2(Y,X) & = n^{1/3}\E[|Y-m(cn^{-a})|^2\one_{[cn^{-a}-n^{-1/3}K,cn^{-a}+n^{-1/3}K]}(X)] \\
	& = n^{1/3}\E[\left(\varepsilon^2 + |m(X)-m(cn^{-a})|^2\right)\one_{[cn^{-a}-n^{-1/3}K,cn^{-a}+n^{-1/3}K]}(X)] \\
	& = n^{1/3}\int_{cn^{-a}-n^{-1/3}K}^{cn^{-a}+n^{-1/3}K}\left(\sigma^2(x) + |m(x) - m(cn^{-a})|^2\right)f(x)\dx x \\
	& = O(1).
	\end{aligned}
	\end{equation*}
	Next, we verify the Lindeberg's condition under Assumption~\ref{as:dgp} (i)
	\begin{equation*}
	\begin{aligned}
	\E G_{n1}^2\one\{G_n>\eta\sqrt{n}\} & \leq \frac{\E G_n^{2+\delta}}{\eta^\delta n^{\delta/2}} \\	
	& = \frac{n^{(2+\delta)/6}}{\eta^\delta n^{\delta/2}}\E\left[|Y-m(cn^{-a})|^{2+\delta}\one_{[cn^{-a}-n^{-1/3}K,cn^{-a}+n^{-1/3}K]}(X)\right] \\
	& = \frac{n^{(2+\delta)/6}}{\eta^\delta n^{\delta/2}}O(n^{-1/3}) \\
	& = o(1).
	\end{aligned}
	\end{equation*}
	Lastly, under Assumption~\ref{as:dgp} (iii), for every $\delta_n\to0$
	\begin{equation*}
	\begin{aligned}
	\sup_{|t-s|\leq\delta_n} \E|g_{n,t} - g_{n,s}|^2 & = n^{1/3}\sup_{|t-s|\leq\delta_n}\E\left[\left|Y-m(cn^{-a})\right|^2\one_{[cn^{-a}+n^{-1/3}t,cn^{-a}+n^{-1/3}s]}(X)\right] \\
	& = n^{1/3}\sup_{|t-s|\leq\delta_n}\E\left[\left(\varepsilon^2 + |m(X)-m(cn^{-a})|^2\right)\one_{[cn^{-a}+n^{-1/3}t,cn^{-a}+n^{-1/3}s]}(X)\right] \\
	& = O(\delta_n) \\
	& = o(1).
	\end{aligned}
	\end{equation*}
	Therefore, by \cite{van2000weak}, Theorem 2.11.22
	\begin{equation*}
	I_{n1}(t) \leadsto \sqrt{\sigma^2(0)f(0)}W_t\qquad \text{in}\qquad \ell^\infty[-K,K].
	\end{equation*}
	Under Assumption~\ref{as:dgp} (ii) and (iv) by Taylor's theorem
	\begin{equation*}
	\begin{aligned}
	II_{n1}(t) & = n^{2/3}\E\left[(m(cn^{-a}) - Y)\left(\one_{[0,cn^{-a}+tn^{-1/3}]}(X) - \one_{[0,cn^{-a}]}(X)\right)\right] \\
	& = n^{2/3}\int_{F(cn^{-a})}^{F(cn^{-a} + tn^{-1/3})}\left(m(cn^{-a}) - m(F^{-1}(u))\right)\dx u \\
	& = -n^{2/3}\frac{m'(0)}{2f(0)}(1+o(1))[F(cn^{-a} + tn^{-1/3})-F(cn^{-a})]^2 \\
	& = - \frac{t^2m'(0)}{2f(0)}[f(0)]^2(1+o(1)) \\
	& = -\frac{t^2}{2}m'(0)f(0) + o(1)
	\end{aligned}
	\end{equation*}
	uniformly over $[-K,K]$. Lastly, by the uniform law of the iterated logarithm
	\begin{equation*}
	\begin{aligned}
	III_{n1}(t) & =   n^{1/3}u[F(tn^{-1/3}+cn^{-a})-F(cn^{-a})] + o(1) \\
	& = utf(0) + o(1)
	\end{aligned}
	\end{equation*}
	uniformly over $t\in[-K,K]$. Therefore, for every $K<\infty$
	\begin{equation}\label{eq:a>1/3}
	Z_{n1}(t) \leadsto utf(0) - \sqrt{\sigma^2(0)f(0)}W_t - \frac{t^2}{2}m'(0)f(0) \triangleq Z_1(t),\quad \text{in}\quad \ell^\infty[-K,K].
	\end{equation}
	Next, we verify conditions of the argmax continuous mapping theorem \cite{kim1990cube}, Theorem 2.7. First, note that since
	\begin{equation*}
	\Var(Z_1(s) - Z_1(t))=\sigma^2(0)f(0)|t-s| \ne 0,\qquad \forall t\ne s,
	\end{equation*}
	by \cite{kim1990cube}, Lemma 2.6, the process $t\mapsto Z_1(t)$ achieves its maximum a.s. at a unique point. Second, by law of iterated logarithm for the Brownian motion
	\begin{equation*}
	\limsup_{t\to\infty}\frac{W_t}{\sqrt{2t\log\log t}} = 1,\qquad \mathrm{a.s.}
	\end{equation*}
	which shows that the quadratic term dominates asymptotically, i.e., $Z_1(t) \to -\infty$ as $|t|\to \infty$. It follows that the maximizer of $t\mapsto Z_1(t)$ is tight. Lastly, by Lemma~\ref{lemma:tightness} in the Supplementary Material, the argmax of $t\mapsto Z_{n1}(t)$ is uniformly tight. Therefore, by the argmax continuous mapping theorem, see \cite{kim1990cube}, Theorem 2.7
	\begin{equation*}
	\begin{aligned}
	& \Pr\left(n^{1/3}\left(\hat m\left(cn^{-a}\right) - m(0)\right) \leq u\right) \\
	& \to 
	\Pr\left(\argmax_{t\in\R}Z_{1}(t) \geq 0\right) \\
	& = \Pr\left(\argmax_{t\in\R}\left\{ut - \sqrt{\frac{\sigma^2(0)}{f(0)}}W_t -  \frac{t^2}{2}m'(0) \right\} \geq 0 \right).
	\end{aligned}
	\end{equation*}
	By the change of variables $t\mapsto \left(\frac{a}{b}\right)^{2/3}s + \frac{c}{2b}$, scale invariance of the Brownian motion $W_{\sigma^2t-\mu} = \sigma W_t - W_\mu$, and scale and shift invariance of the argmax
	\begin{equation*}
	\argmax_{t\in\R}\left\{aW_t - bt^2 + ct \right\} = \left(\frac{a}{b}\right)^{2/3}\argmax_{s\in\R}\{W_s - s^2\} + \frac{c}{2b}.
	\end{equation*}
	This allows us to simplify the limiting distribution as
	\begin{equation*}
	\begin{aligned}
	\Pr\left(n^{1/3}\left(\hat m\left(cn^{-a}\right) - m(0)\right) \leq u\right) & \to \Pr\left(\argmax_{t\in\R}\left\{ut - \sqrt{\frac{\sigma^2(0)}{f(0)}}W_t -  \frac{t^2}{2}m'(0) \right\} \geq 0 \right) \\
	& = \Pr\left(\left|\frac{4m'(0)\sigma^2(0)}{f(0)}\right|^{1/3}\argmax_{s\in\R}\{W_s - s^2\} \leq u\right),
	\end{aligned}
	\end{equation*}
	where we the use symmetry of the objective function.
	
	\paragraph{Case (ii): $a\in[1/3,1)$.} For every $u\in\R$
	\begin{equation*}
	\begin{aligned}
	& \Pr\left(n^{(1-a)/2}\left(\hat m\left(cn^{-a}\right) - m(0)\right) \leq u\right) \\
	& = \Pr\left(\hat m\left(cn^{-a}\right)\leq m(0) + n^{(a-1)/2}u\right) \\
	& = \Pr\left(\argmax_{s\in[0,1]}\left\{\left(n^{(a-1)/2}u + m(0)\right)F_n(s) - M_n(s)\right\} \geq cn^{-a}\right) \\
	& = \Pr\left(\argmax_{t\in [0,n^a/c]}\left\{(n^{(a-1)/2}u + m(0))F_n(cn^{-a}t) - M_n(cn^{-a}t)\right\} \geq 1\right), \\
	\end{aligned}
	\end{equation*}
	where the second equality follows by the switching relation in Eq.~(\ref{eq:switching}) and the last by the change of variables $s\mapsto cn^{-a}t$. 
	
	The location of the argmax is the same as the location of the argmax of the following process
	\begin{equation*}
	Z_{n2}(t) \triangleq I_{n2}(t) + II_{n2}(t) + III_{n2}(t) + IV_{n2}(t)
	\end{equation*}
	with
	\begin{equation*}
	\begin{aligned}
	I_{n2}(t) & = \sqrt{n}(P_n - P)g_{n,t},\quad g_{n,t}\in\mathcal{G}_{n2} \\
	II_{n2}(t) & = n^{(a+1)/2}\E[(m(0)-Y)\one_{[0,cn^{-a}t]}(X)] \\
	III_{n2}(t) & = n^{a}u(F_n(cn^{-a}t) - F(cn^{-a}t)) \\
	IV_{n2}(t) & = n^{a}uF(cn^{-a}t),
	\end{aligned}
	\end{equation*}
	where
	\begin{equation*}
	\mathcal{G}_{n2} = \left\{g_{n,t}(y,x) = n^{a/2}(m(0) - y)\one_{[0,cn^{-a}t]}(x):\quad t\in[0,K] \right\}.
	\end{equation*}
	We will show that the process $Z_{n2}$ converges weakly to a non-degenerate Gaussian process in $\ell^\infty[0,K]$ for every $K<\infty$.
	
	Under Assumption~\ref{as:dgp} (ii)-(iii) the covariance structure of the process $I_{n2}$ converges pointwise to the one of the scaled Brownian motion
	\begin{equation*}
	\begin{aligned}
	\Cov(g_{n,t},g_{n,s}) & = n^{a}\E\left[|Y-m(0)|^2\one_{[0,cn^{-a}(t\wedge s)]}(X) \right] + o(1)\\
	& = n^a\E[\varepsilon^2\one_{[0,cn^{-a}(t\wedge s)]}(X)] + n^{a}\E[|m(X)-m(0)|^2\one_{[0,cn^{-a}(t\wedge s)]}(X)]  + o(1) \\
	& = n^{a}\int_0^{cn^{-a}(t\wedge s)}\sigma^2(x)\dx F(x) + n^a\int_0^{cn^{-a}(t\wedge s)}|m(x) - m(0)|^2\dx F(x) +  o(1) \\
	& = \sigma^2(0)f(0)c(s\wedge t) + o(1).
	\end{aligned}
	\end{equation*}
	The class $\mathcal{G}_{n2}$ is VC subgraph with VC index 2 and envelop
	\begin{equation*}
	G_{n2}(y,x) = n^{a/2}|y-m(0)|\one_{[0,cn^{-a}K]}(x),
	\end{equation*}
	which is square integrable
	\begin{equation*}
	\begin{aligned}
	P G_{n2}^2 & = n^{a}\E[|Y-m(0)|^2\one_{[0,cn^{-a}K]}(X)] \\
	& = n^{a}\E[\varepsilon^2\one_{[0,cn^{-a}K]}(X)] + n^{a}\E[|m(X)-m(0)|^2\one_{[0,cn^{-a}K]}(X)] \\
	& = n^{a}\int_0^{n^{-a}K}\sigma^2(x)\dx F(x) + o(1)\\
	& = O(1).
	\end{aligned}
	\end{equation*}
	Next, we verify the Lindeberg's condition under Assumption~\ref{as:dgp} (i)
	\begin{equation*}
	\begin{aligned}
	\E G_{n2}^2\one\{G_n>\eta\sqrt{n}\} & \leq \frac{\E G_{n2}^{2+\delta}}{\eta^\delta n^{\delta/2}} \\
	& = \frac{n^{\frac{(2+\delta)a}{2}}}{\eta^\delta n^{\delta/2}}\E[|Y - m(0)|^{2+\delta}\one_{[0,cn^{-a}K]}(X)] \\
	& = \frac{n^{\frac{(2+\delta)a}{2}}}{\eta^\delta n^{\delta/2}}O(n^{-a}) = O(n^{-\delta(1-a)/2}) \\
	& = o(1).
	\end{aligned}
	\end{equation*}
	Lastly, under Assumption~\ref{as:dgp} (iii), for every $\delta_n\to0$
	\begin{equation*}
	\begin{aligned}
	\sup_{|t-s|\leq\delta_n} \E|g_{n,t} - g_{n,s}|^2 & = n^{a}\sup_{|t-s|\leq\delta_n}\E\left[\left|Y-m(0)\right|^2\one_{[cn^{-a}t,cn^{-a}s]}(X)\right] \\
	& = n^{a}\sup_{|t-s|\leq\delta_n}\E\left[\varepsilon^2\one_{[cn^{-a}t,cn^{-a}s]}(X)\right] + \E\left[|m(X)-m(0)|^2\one_{[cn^{-a}t,cn^{-a}s]}(X)\right] \\
	& = O(\delta_n) \\
	& = o(1).
	\end{aligned}
	\end{equation*}
	Therefore, by \cite{van2000weak}, Theorem 2.11.22
	\begin{equation*}
	I_{n2}(t) \leadsto \sqrt{\sigma^2(0)f(0)c}W_t\qquad \text{in}\qquad \ell^\infty[0,K].
	\end{equation*}
	Next,
	\begin{equation*}
		II_{n2}(t) = n^{(a+1)/2}\int_0^{F(cn^{-a}t)}(m(0) - m(F^{-1}(u)))\dx u.
	\end{equation*}
	For $a=1/3$, under Assumption~\ref{as:dgp} (iv), by Taylor's theorem  uniformly over $t$ on compact sets
	\begin{equation*}
	\begin{aligned}
	II_{n2}(t) & = -n^{(a+1)/2}\frac{1}{2}\frac{m'(F^{-1}(0))}{f(F(0))}[F(cn^{-a}t)]^2(1 + o(1)) \\
	& = -n^{(1-3a)/2}\frac{t^2c^2}{2}m'(0)f(0)(1+o(1)) \\
	& =	-\frac{t^2c^2}{2}m'(0)f(0) + o(1),
	\end{aligned}
	\end{equation*}
	while for $a\in(1/3,1)$ under $\gamma$-H\"{o}lder continuity of $m$ since $\gamma>(1-a)/2a$
	\begin{equation}\label{eq:quadratic_term}
	\begin{aligned}
		II_{n2}(t) & = n^{(a+1)/2}\int_0^{cn^{-a}t}(m(0) - m(x))f(x)\dx x \\
		& \lesssim  n^{(a+1)/2}\int_0^{cn^{-a}t}|x|^\gamma\dx x  \\
		& = O\left(n^{\frac{1-2a\gamma-a}{2}}\right) \\
		& = o(1).
	\end{aligned}
	\end{equation}
	Next, by the maximal inequality \cite{kim1990cube}, p.199,
	\begin{equation}\label{eq:max_ineq_Fn}
	\E\left[\sup_{t\in[0,K]}|F_n(cn^{-a}t) - F(cn^{-a}t)|\right] \leq n^{-1/2}J(1)\sqrt{PG^2_n},
	\end{equation}
	where $J(1)<\infty$ is the uniform entropy integral and
	\begin{equation*}
	\begin{aligned}
	PG^2_n & = F(cn^{-a}t) \\
	& = f(0)(1+o(1))cn^{-a}.
	\end{aligned}
	\end{equation*}
	Since $a<1$
	\begin{equation*}
	III_{n2}(t) = O_P(n^{(a-1)/2}) = o_P(1).
	\end{equation*}
	uniformly over $[0,K]$.
	
	Lastly,
	\begin{equation*}
	\begin{aligned}
	IV_{n2}(t) & = n^auF(cn^{-a}t) \\
	& = utf(0)c + o(1)
	\end{aligned}
	\end{equation*}
	uniformly over $t\in[0,K]$. Therefore, for every $K<\infty$
	\begin{equation}\label{eq:a=1/3}
	Z_{n2}(t) \leadsto utf(0)c + \sqrt{\sigma^2(0)f(0)c}W_t - \frac{t^2c^2}{2}m'(0)f(0)\one_{a=1/3} \triangleq Z_2(t)\quad \text{in}\quad \ell^\infty[0,K].
	\end{equation}
	Next, we extend processes $Z_{n2}$ and $Z_2$ to the entire real line as follows
	\begin{equation*}
	\tilde Z_{n2}(t) = \begin{cases}
	Z_{n2}(t), & t\geq 0 \\
	t, & t < 0
	\end{cases},
	\qquad 
	\tilde Z_2(t) = \begin{cases}
	Z_2(t), & t\geq 0, \\
	t, & t < 0.
	\end{cases}
	\end{equation*}
	We verify conditions of the argmax continuous mapping theorem \cite{kim1990cube}, Theorem 2.7. First, by the similar argument as before, the argmax of $t\mapsto \tilde Z_2(t)$ is unique and tight. Second, by Lemma~\ref{lemma:tightness} in the Supplementary Material, the argmax of $\tilde Z_{n2}$ is uniformly tight for every $u\in\R$ when $a=1/3$ and for every $u<0$ when $a\in(1/3,1)$. Therefore, by the argmax continuous mapping theorem, see \cite{kim1990cube}, Theorem 2.7,
	\begin{equation*}
	\begin{aligned}
	& \Pr\left(n^{(1-a)/2}\left(\hat m\left(cn^{-a}\right) - m(0)\right) \leq u\right) \\
	& \to 
	\Pr\left(\argmax_{t\in[0,\infty)}Z_{2}(t) \geq 1\right) \\
	& = \Pr\left(\argmax_{t\in[0,\infty)}\left\{ut - \sqrt{\frac{\sigma^2(0)}{cf(0)}}W_t -  \frac{t^2c}{2}m'(0)\one_{a=1/3} \right\} \geq 1 \right) \\
	& = \Pr\left(D^L_{[0,\infty)}\left(\sqrt{\frac{\sigma^2(0)}{cf(0)}}W_t + \frac{t^2c}{2}m'(0)\one_{a=1/3} \right)(1) \leq u\right) \\
	\end{aligned}
	\end{equation*}
	where the last line follows by the switching relation similar to the one in Eq.~(\ref{eq:switching}). 
	
	To conclude, it remains to show that when $a\in(1/3,1)$
	\begin{equation*}
	\Pr\left(n^{(1-a)/2}\left(\hat m\left(cn^{-a}\right) - m(0)\right) \geq 0\right) \to 0.
	\end{equation*}
	This follows since for every $\epsilon>0$
	\begin{align*}
	\Pr\left(n^{(1-a)/2}\left(\hat m\left(cn^{-a}\right) - m(0)\right) \geq 0\right)&\leq \Pr\left(n^{(1-a)/2}\left(\hat m\left(cn^{-a}\right) - m(0)\right) \geq -\epsilon\right)\\
	&\to\Pr\left(\argmax_{t\in[0,\infty)}\left\{-\epsilon t - \sqrt{\frac{\sigma^2(0)}{cf(0)}}W_t  \right\} \leq c\right) \\
	&=\Pr\left(\argmax_{t\in[0,\infty)}\left\{W_t-\sqrt{\frac{cf(0)}{\sigma^2(0)}}\epsilon t   \right\} \leq c\right) \\
	&=\Pr\left(\argmax_{t\in[0,\infty)}\left\{W_t- t   \right\} \leq {\frac{c^2f(0)}{\sigma^2(0)}}\epsilon\right), \\
	\end{align*}
	which tends to zero as $\epsilon\downarrow 0$ as can be seen from
	\begin{equation*}
	\limsup_{t\downarrow0}\frac{W_t}{\sqrt{2t\log\log (1/t)}} = 1,\qquad \mathrm{a.s.}
	\end{equation*}
\end{proof}

\begin{proof}[Proof of Theorem~\ref{thm:bootstrap}]
	Put
	\begin{equation*}
	M_n^*(t) = \frac{1}{n}\sum_{i=1}^nY_i^*\one\{X_i\leq t\}
	\end{equation*}
	and note that now $\hat m^*(x)$ is the left derivative of the greatest convex minorant of the cumulative sum diagram
	\begin{equation*}
	t\mapsto (F_n(t),M_n^*(t)),\qquad t\in[0,1]
	\end{equation*}
	at $t=x$. By the argument similar to the one used in the proof of Theorem~\ref{thm:isotonic_clt} for every $u<0$
	\begin{equation*}
	\begin{aligned}
	& \mathrm{Pr}^*\left(n^{(1-a)/2}\left(\hat m^*(cn^{-a}) - \hat m(cn^{-a})\right) \leq u\right) \\
	& = \mathrm{Pr}^*\left(\argmax_{t\in [0,n^a/c]}\left\{(n^{(a-1)/2}u + \hat m(cn^{-a}))F_n(cn^{-a}t) - M_n^*(cn^{-a}t)\right\} \geq 1\right).
	\end{aligned}
	\end{equation*}	
	The location of the argmax is the same as the location of the argmax of the following process
	\begin{equation*}
	Z_n^*(t) \triangleq I_n^*(t) + II_n^*(t) + III_n^*(t) + IV_{n}^*(t)
	\end{equation*}
	with
	\begin{equation*}
	\begin{aligned}
	I_n^*(t) & = -n^{(a-1)/2}\sum_{i=1}^n\eta_i^*\varepsilon_i\one_{[0,cn^{-a}t]}(X_i) \\
	II_n^*(t) & = n^{(a-1)/2}\sum_{i=1}^n\eta_i^*(\tilde m(X_i) - m(X_i))\one_{[0,cn^{-a}t]}(X_i) \\
	III_n^*(t) & = n^{(1+a)/2}\int_0^{cn^{-a}t}(\hat m(cn^{-a}) - \tilde m(x))\dx F_n(x) \\
	IV_n^*(t) & = n^{a}uF_n(cn^{-a}t) \\
	\end{aligned}
	\end{equation*}
	The process $I_n^*$ is the multiplier empirical process indexed by the class of functions
	\begin{equation*}
	\mathcal{G}_n = \left\{(\epsilon,x)\mapsto -n^{a/2}\epsilon\one_{[0,cn^{-a}t]}(x):\quad t\in[0,K] \right\}.
	\end{equation*}
	This class is of VC subgraph type with VC index 2 and envelop $G_n(\epsilon,x) = n^{a/2}|\epsilon|\one_{[0,cn^{-a}K]}(x)$, which is square-integrable
	\begin{equation*}
	PG_n^2 = n^a\int_0^{cn^{-a}K}\sigma^2(x)\dx F(x) = O(1).
	\end{equation*}
	This envelop satisfies Lindeberg's condition for every $\eta>0$
	\begin{equation*}
	\begin{aligned}
	\E G_n^2\{G_n >\eta\sqrt{n} \} & \leq \frac{n^{\frac{a(2+\delta)}{2}}\E\left[|\varepsilon|^{2+\delta}\one_{[0,cn^{-a}K]}(X)\right]}{\eta^\delta n^{\delta/2}} \\
	& = O(n^{\delta(a-1)/2}) \\
	& = o(1)
	\end{aligned}
	\end{equation*}
	and for every $g_{n,t},g_{n,s}\in\mathcal{G}_n$ and $\delta_n\to 0$
	\begin{equation*}
	\begin{aligned}
	\sup_{|t-s|\leq\delta_n} \E|g_{n,t} - g_{n,s}|^2 & = n^{a}\sup_{|t-s|\leq\delta_n}\E\left[\varepsilon^2\one_{[cn^{-a}s,cn^{-a}t]}(X)\right] \\
	& = O(\delta_n) \\
	& = o(1).
	\end{aligned}
	\end{equation*}
	
	Next, we show that the covariance structure is
	\begin{equation*}
	\begin{aligned}
	\E^*[I_n^*(t)I_n^*(s)] & = n^{a-1}\sum_{i=1}^n\varepsilon_i^2\one_{[0, cn^{-a}(t\wedge s)]}(X_i) \\
	& =  n^{a}\E[\varepsilon^2\one_{[0,cn^{-a}(t\wedge s)]}(X_i)] + R_n(t,s) \\
	\end{aligned}
	\end{equation*}
	with
	\begin{equation*}
	R_{n}(t,s) = n^{a-1}\sum_{i=1}^n\varepsilon_i^2\one_{[0, cn^{-a}(t\wedge s)]}(X_i) - n^{a}\E\left[\varepsilon^2\one_{[0,cn^{-a}(t\wedge s)]}(X_i)\right] \\
	\end{equation*}
	Since $\E[\varepsilon^4|X]\leq C$, the variance of $R_n$ tends to zero
	\begin{equation*}
	\begin{aligned}
	\Var(R_n(t,s)) & = n^{2a-1}\Var(\varepsilon^2\one_{[0,cn^{-a}(t\wedge s)]}(X)) \\
	& \leq n^{2a - 1}\E[\varepsilon^4\one_{[0,cn^{-a}(t\wedge s)]}(X))] \\
	& \leq Cn^{2a - 1}F(cn^{-a}(t\wedge s)) \\
	& = O(n^{a-1}) \\
	& = o(1),
	\end{aligned}
	\end{equation*}
	whence by Chebyshev's inequality $R_n(t,s) = o_P(1)$. Therefore, the covariance structure converges pointwise to the one of the scaled Brownian motion
	\begin{equation*}
	\begin{aligned}
	\E^*[I_n^*(t)I_n^*(s)] & = n^a\int_0^{cn^{-a}(t\wedge s)}\sigma^2(x)\dx F(x) + o_P(1) \\
	& = \sigma^2(0)f(0)c(t\wedge s) + o_P(1),
	\end{aligned}
	\end{equation*}
	where $\E^*[.] = \E[.|(Y_i,X_i)_{i=1}^\infty]$.	By \cite{van2000weak}, Theorem 2.11.22, the class $\mathcal{G}_n$ is Donsker, whence by the multiplier central limit theorem, \cite{van2000weak}, Theorem 2.9.6
	\begin{equation*}
	\sup_{h\in BL_1(\ell^\infty[0,K])}\left|\E^*h(I_n^*) - \E h\left(\sqrt{\sigma^2(0)f(0)c}W_t\right)\right| \xrightarrow{P} 0.
	\end{equation*}
	
	Next, $II_n^*$ is a multiplier empirical process indexed by the degenerate class of functions
	\begin{equation*}
	\mathcal{H}_n = \left\{x\mapsto n^{a/2}\left(\tilde m(x) - m(x)\right)\one_{[0,n^{-a}t]}(x):\; t\in [0,K] \right\}.
	\end{equation*}
	Since this class is of VC subgraph type with VC index 2, by the maximal inequality
	\begin{equation*}
	\begin{aligned}
	\E^*\left[\sup_{t\in[0,K]}|II_n^*(t)|\right] & \lesssim  \sqrt{n^a\int_0^{cn^{-a}K}|\tilde m(x) - m(x)|^2\dx F_n(x)} \\
	& = \sqrt{n^a\int_0^{K}|\tilde m(cn^{-a}y) - m(cn^{-a}y)|^2\dx F_n(cn^{-a}y)} \\
	& = \sqrt{o_P(1)n^aF_n(cn^{-a}K)} \\
	& = o_P(1),
	\end{aligned}
	\end{equation*}
	where we apply Proposition~\ref{prop:uniform} in the Supplementary Material.
	
	Next, changing variables $x\mapsto cn^{-a}y$ and using the fact that $\hat m$ is non-decreasing
	\begin{equation*}
	\begin{aligned}
	III_n^*(t) & = n^{(1+a)/2}\int_0^{t}(\hat m(cn^{-a}) - \tilde m(cn^{-a}y))\dx F_n(cn^{-a}y) \\
	& \leq n^{(1+a)/2}\int_0^1(\hat m(cn^{-a}) - \tilde m(cn^{-a}y))\dx F_n(cn^{-a}y) \\
	& \leq n^{(1+a)/2}\sup_{y\in[0,1]}|\hat m(cn^{-a}) - \tilde m(cn^{-a}y)| F_n(cn^{-a})\\
	& = o_P(1)\left(n^{(1+a)/2}(F_n(cn^{-a}) - F(cn^{-a})) + n^{(1+a)/2}F(cn^{-a}) \right) \\
	& = o_P(1)\left(O_P(1) + O(n^{(1-a)/2})\right) \\
	& = o_P(1),
	\end{aligned}
	\end{equation*}
	where the fourth line follows by Proposition~\ref{prop:uniform} in the Supplementary Material and Theorem~\ref{thm:isotonic_clt} (ii), and the fifth since the variance of the term inside is $O(1)$.
	
	Next
	\begin{equation*}
	IV_n^*(t) = utf(0)c + o_P(1)
	\end{equation*}
	in the same way we treat $III_{n2}+IV_{n2}$ in the proof of Theorem~\ref{thm:isotonic_clt}.
	
	Combining all estimates obtained above together
	\begin{align*}
	&\sup_{h\in BL_1(\ell^\infty[0,K])}\left|\E^*h(Z_n^*) - \E h\left(utf(0)c+\sqrt{\sigma^2(0)f(0)c}W_t\right)\right| \\
	&=	\sup_{h\in BL_1(\ell^\infty[0,K])}\left|\E^*h(Z_n^*)-\E^*h(utf(0)c+I_n^*)\right|\\
	&\quad+ \sup_{h\in BL_1(\ell^\infty[0,K])}\left|\E^* h(utf(0)c+I_n^*)- \E h\left(utf(0)c+\sqrt{\sigma^2(0)f(0)c}W_t\right)\right|\\
	&\leq 	\sup_{t\in[0,K]}\left|II_n^*(t) + III_n^*(t) + IV_{n}^*(t)\right|+ o_P(1) \\
	&=o_P(1).
	\end{align*}			
	
	By Lemma~\ref{lemma:tightness_bootstrap} in the Supplementary Material, the argmax of $Z_n^*(t)$ is uniformly tight, so by Lemma~\ref{lemma:argmax_conditional} in the Supplementary Material
	\begin{equation*}
	\mathrm{Pr}^*\left(n^{(1-a)/2}\left(\hat m^*(cn^{-a}) - \hat m(cn^{-a})\right) \leq u\right) \xrightarrow{P} \Pr\left(D^L_{[0,\infty)}\left(\sqrt{\frac{\sigma^2(0)}{cf(0)}}W_t\right)(1)\leq u\right).
	\end{equation*}
	To conclude, it remains to show that when $a\in(1/3,1)$
	\begin{equation*}
	\mathrm{Pr}^*\left(n^{(1-a)/2}\left(\hat m^*(cn^{-a}) - \hat m(cn^{-a})\right)  \geq 0\right) \to 0.
	\end{equation*}
	This follows since for every $\epsilon>0$
	\begin{align*}
	\mathrm{Pr}^*\left(n^{(1-a)/2}\left(\hat m^*(cn^{-a}) - \hat m(cn^{-a})\right)  \geq 0\right)&\leq \mathrm{Pr}^*\left(n^{(1-a)/2}\left(\hat m^*(cn^{-a}) - \hat m(cn^{-a})\right)  \geq -\epsilon\right)\\
	&\xrightarrow{P} \Pr\left(\argmax_{t\in[0,\infty)}\left\{-\epsilon t - \sqrt{\frac{\sigma^2(0)}{cf(0)}}W_t  \right\} \leq c\right) \\
	&=\Pr\left(\argmax_{t\in[0,\infty)}\left\{W_t- t   \right\} \leq c^2{\frac{f(0)}{\sigma^2(0)}}\epsilon\right), \\
	\end{align*}
	which tends to zero as $\epsilon\downarrow 0$; see the proof of Theorem~\ref{thm:isotonic_clt}. The result follows from Theorem~\ref{thm:isotonic_clt} (ii) with $a>1/3$.
\end{proof}

\begin{proof}[Proof of Theorem~\ref{thm:irdd_asymptotics}]
	Since
	\begin{equation*}
	n^{1/3}(\hat\theta - \theta) = n^{1/3}\left(\hat m_+(cn^{-1/3}) - m_+\right) - n^{1/3}\left(\hat m_-(-cn^{-1/3}) - m_-\right),
	\end{equation*}
	the proof is similar to the proof of Theorem~\ref{thm:isotonic_clt} and Remark~\ref{remark:right_boundary} with $a=1/3$. Strictly speaking, the proof of Theorem~\ref{thm:isotonic_clt} and Remark~\ref{remark:right_boundary} change a little. Now $F(0)\ne 0$ and we will have $\tilde F(x) = F(x)-F(0)$ and $\tilde F_n(x) = F_n(x) - F_n(0)$ instead of $F(x)$ and $F_n(x)$ everywhere in the proof of Theorem~\ref{thm:isotonic_clt} and $\tilde F(x) = F(0) - F(x)$ and $\tilde F_n(x) = F_n(0) - F_n(x)$ in the proof of Remark~\ref{remark:right_boundary}. Recall that if $X_n\si Y_n,\forall n\geq 1$ and $X\si Y$ are such that for $X_n\cw X$ and $Y_n\cw Y$, then $(X_n,Y_n)\cw (X,Y)$. In our case, the estimators $\hat m_+$ and $\hat m_-$ are independent by the independence of the two samples. This also implies that the processes $I_{n2}^+$ and $I_{n2}^-$ (i.e. $I_{n2}$ corresponding to $\hat m_+$ and for $\hat m_-$), are asymptotically uncorrelated, which implies that $W^+_t$ and $W^-_t$ are independent due to the fact that zero-mean Gaussian processes are completely characterized by their covariance function.
\end{proof}

\begin{proof}[Proof of Theorem~\ref{thm:irdd_fuzzy}]
	Given the proof of Theorem~\ref{thm:irdd_asymptotics}, it is enough to show the weak convergence of
	\begin{equation*}
		n^{1/3}\begin{pmatrix}
			\hat m_+(cn^{-1/3}) - m_+(0) \\
			\hat p_+(cn^{-1/3}) - p_+(0)
		\end{pmatrix}\qquad \text{and}\qquad n^{1/3}\begin{pmatrix}
		\hat m_-(cn^{-1/3}) - m_-(0) \\
		\hat p_-(cn^{-1/3}) - p_-(0)
		\end{pmatrix}.
	\end{equation*}
	For the sake of brevity, we sketch the proof for the estimators after the cutoff. To that end, by the argument similar to the one in the proof of Theorem~\ref{thm:isotonic_clt} for all $(u_1,u_2)\in\R^2$
	\begin{equation*}
	\begin{aligned}
		& \Pr\left(n^{1/3}(\hat m_+(cn^{-1/3}) - m_+(0)) \leq u_1,\;n^{1/3}(\hat p_+(cn^{-1/3}) - p_+(0)) \leq u_2\right) \\
		& = \Pr\left(\argmax_{t\in[0,n^a/c]}Z_{2n}(t) \geq 1,\; \argmax_{t\in[0,n^a/c]}\check Z_{2n}(t)\geq 1 \right),
	\end{aligned}
	\end{equation*}
	where
	\begin{equation*}
	\begin{aligned}
		Z_{n2}(t) & = \left(n^{a}u_1 + n^{(a+1)/2}m(0)\right)(F_n(cn^{-a}t) - F_n(0)) - n^{(a+1)/2}(M_n(cn^{-a}t) - M_n(0)), \\
		\check Z_{n2}(t) & = \left(n^{a}u_1 + n^{(a+1)/2}m(0)\right)(F_n(cn^{-a}t) - F_n(0)) - n^{(a+1)/2}(\check M_n(cn^{-a}t) - \check M_n(0)), \\
		\check M_n(t) & = \frac{1}{n}\sum_{i=1}^nD_i\one\{X_i \leq t\}.
	\end{aligned}
	\end{equation*}
	Then
	\begin{equation*}
		\begin{pmatrix}
			Z_{n2}(t) \\
			\check Z_{n2}(t)
		\end{pmatrix} \leadsto
		\begin{pmatrix}
		Z_{2}(t) \\
		\check Z_{2}(t)
		\end{pmatrix}\qquad \text{in}\qquad \ell^\infty[0,K]\times \ell^\infty[0,K],
	\end{equation*}
	where 
	\begin{equation*}
	\begin{aligned}
		Z_{2}(t) & = u_1tf_+c + \sqrt{\sigma^2_+f_+c}W_t^+ - \frac{t^2c^2}{2}m'_+f_+\one_{a=1/3}, \\
		\check Z_{2}(t) & = u_2tf_+c + \sqrt{p_+(1-p_+)f_+c}B_t^+ - \frac{t^2c^2}{2}p'_+f_+\one_{a=1/3},
	\end{aligned}
	\end{equation*}
	and the covariance between $\sqrt{\sigma^2_+f_+c}W_t^+$ and $\sqrt{p_+(1-p_+)f_+c}B_s^+$ is
	\begin{equation*}
	\begin{aligned}
		& \Cov\left(n^{a/2}(m_+ - Y_i)\one_{[0,cn^{-a}t]}(X_i),n^{a/2}(p_+ - D_i)\one_{[0,cn^{-a}s]}(X_i) \right) \\
		= & n^a\Cov\left(\varepsilon_i\one_{[0,cn^{-a}t]}(X_i),(D_i-p(X_i))\one_{[0,cn^{-a}s]}(X_i) \right) + o(1) \\
		= & n^a\int_0^{cn^{-a}(t\wedge s)}\E\left[\varepsilon(D-p(X))|X=x\right]f(x)\dx x + o(1) \\
		\to & \rho_+cf_+(t\wedge s).
	\end{aligned}
	\end{equation*}
	Therefore, by the joint argmax continuous mapping theorem, see, e.g., \cite{abrevaya2005bootstrap}, Theorem 3,
	\begin{equation*}\footnotesize
	\begin{aligned}
		& \Pr\left(n^{1/3}(\hat m_+(cn^{-1/3}) - m_+(0)) \leq u_1,\;n^{1/3}(\hat p_+(cn^{-1/3}) - p_+(0)) \leq u_2\right) \\
		& \to \Pr\left(\argmax_{t\in[0,\infty)}Z_{2}(t) \geq 1,\; \argmax_{t\in[0,\infty)}\check Z_{2}(t)\geq 1 \right) \\
		& = \Pr\left(D^L_{[0,\infty)}\left(\sqrt{\frac{\sigma^2_+}{cf_+}}W_t^+ + \frac{t^2c}{2}m_+\one_{a=1/3}\right)(1) \leq u_1,\; D^L_{[0,\infty)}\left(\sqrt{\frac{p_+(1-p_+)}{cf_+}}B_t^+ + \frac{t^2c}{2}p_+\one_{a=1/3}\right)(1)\leq u_2\right).
	\end{aligned}
	\end{equation*}	
	Finally, put $\hat g = \hat m_+\left(cn^{-1/3}\right) - \hat m_-\left(-cn^{-1/3}\right)$, $\hat h = \hat p_+\left(cn^{-1/3}\right) - \hat p_-\left(-cn^{-1/3}\right)$, $g = m_+ - m_-$, and $h = p_+ - p_-$, and note that
	\begin{equation*}
	\begin{aligned}
		n^{1/3}(\hat\theta^F - \theta) & = n^{1/3}\left(\frac{\hat g}{\hat h} - \frac{g}{h}\right) \\
		& = \frac{n^{1/3}(\hat g - g)h - n^{1/3}(\hat h - h)g}{h\hat h} \\
		& \cw \frac{1}{h}\xi_1 - \frac{g}{h^2}\xi_2.
	\end{aligned}
	\end{equation*}
\end{proof}

\begin{proof}[Proof of Remark~\ref{remark:right_boundary}]
	We sketch only the most important differences below:
	\begin{equation*}
		\begin{aligned}
			& \Pr\left(n^{(1-a)/2}\left(\hat m(-cn^{-a}) - m(0)\right) \leq u\right) \\
			& = \Pr\left(\hat m(-cn^{-a}) \leq un^{(a-1)/2} + m(0)\right) \\
			& = \Pr\left(\argmax_{s\in[-1,0]}\left\{\left(n^{(a-1)/2}u + m(0)\right)F_n(s) - M_n(s)\right\} \geq -cn^{-a}\right) \\
			& = \Pr\left(\argmax_{t\in[-n^a/c,0]}\left\{\left(n^{(a-1)/2}u + m(0)\right)F_n(cn^{-a}t) - M_n(cn^{-a}t)\right\} \geq -1 \right) \\
			& \to \Pr\left(\argmax_{t\in(-\infty,0]}\left\{ut - \sqrt{\frac{\sigma^2(0)}{cf(0)}}W_t -  \frac{t^2c}{2}m'(0)\one_{a=1/3} \right\} \geq -1 \right) \\
			& = \Pr\left(D^L_{(-\infty,0]}\left(\sqrt{\frac{\sigma^2(0)}{cf(0)}}W_t + \frac{t^2c}{2}m'(0)\one_{a=1/3} \right)(-1) \leq u\right) \\
		\end{aligned}
	\end{equation*}
\end{proof}

\newpage
\begin{center}
	{\LARGE\textbf{SUPPLEMENTARY MATERIAL}}	
\end{center}
\bigskip

\setcounter{page}{1}
\setcounter{section}{0}
\setcounter{equation}{0}
\setcounter{table}{0}
\setcounter{figure}{0}
\renewcommand{\theequation}{SM.\arabic{equation}}
\renewcommand\thetable{SM.\arabic{table}}
\renewcommand\thefigure{SM.\arabic{figure}}
\renewcommand\thesection{SM.\arabic{section}}
\renewcommand\thepage{Supplementary Material - \arabic{page}}
\renewcommand\thetheorem{SM.\arabic{theorem}}

\section{Identification}\label{sec:appendix_identification}
In this section, we revise the known identification results for regression discontinuity designs and show that in sharp designs, the identification can be achieved under weaker one-sided continuity conditions. We also show that under the additional assumption our identifying conditions are both necessary and sufficient. 

The regression discontinuity design postulates that the probability of receiving the treatment changes discontinuously at the cutoff. In the iRDD, we also assume that the expected outcome and the probability of receiving the treatment are both monotone in the running variable. We introduce several assumptions below.

\begin{assumption*}[M1]\label{as:monotonicity}
	The following functions are monotone in some neighborhood of $x_0$ (i) $x\mapsto \E[Y_1|X=x]$ and $x\mapsto \E[Y_0|X=x]$; (ii) $x\mapsto \Pr(D=1|X=x)$.
\end{assumption*}

\begin{assumption*}[M2]\label{as:local_responsiveness}
	$\E[Y_1|X=x_0]\geq \E[Y_0|X=x_0]$ in the non-decreasing case or $\E[Y_1|X=x_0]\leq \E[Y_0|X=x_0]$ in the non-increasing case.
\end{assumption*}

\begin{assumption*}[RD]\label{as:rdd}
	Suppose that
	\begin{equation*}
		\lim_{x\downarrow x_0}\Pr(D=1|X=x) \ne \lim_{x\uparrow x_0}\Pr(D=1|X=x).
	\end{equation*}
\end{assumption*}

\begin{assumption*}[OC]
	Under Assumption (M1), suppose that $x\mapsto\E[Y_1|X=x]$ is right-continuous and $x\mapsto \E[Y_0|X=x]$ is left-continuous at $x_0$.
\end{assumption*}

Assumption (M1) ensures that all limits exist at the discontinuity point. For sharp discontinuity design, all individuals with values of the running variable exceeding the cutoff $x_0$ receive the treatment, while all individuals below the cutoff do not. In other words, $D=\one\{X\geq x_0\}$, and, whence $x\mapsto \Pr(D=1|X=x)$ trivially satisfies (M1), (ii). Assumption (RD) is also trivially satisfied for the sharp design. (M2) requires the local responsiveness to the treatment at the cutoff. It is not necessary for the identification, but as we shall see below, it allows us to characterize in some sense both necessary and sufficient conditions. (OC) is weaker than the full continuity at the cutoff. For more general fuzzy designs, we need additionally the conditional independence assumption.
\begin{assumption*}[CI]
	Suppose that $D\si (Y_1,Y_0)|X=x$ for all $x$ in some neighborhood of $x_0$.
\end{assumption*}

\begin{proposition}\label{prop:identification}
	Under Assumptions (M1) and (OC), in the sharp design
	\begin{equation*}
		\lim_{x\downarrow x_0}\E[Y|X=x] - \lim_{x\uparrow x_0}\E[Y|X=x]
	\end{equation*}
	exists and equals to $\theta$. Moreover, under (M1) and (M2), if $\theta$ equals to the expression in Eq.~(\ref{eq:id}), then (OC) is satisfied.
	
	If, additionally, Assumptions (RD) and (CI) are satisfied, and instead of (OC), we assume that $x\mapsto \E[Y_0|X=x]$ and $x\mapsto \E[Y_1-Y_0|X=x]$ are continuous at $x_0$, then
	\begin{equation*}
		\frac{\lim_{x\downarrow x_0}\E[Y|X=x] - \lim_{x\uparrow x_0}\E[Y|X=x]}{\lim_{x\downarrow x_0}\Pr(D=1|X=x) - \lim_{x\uparrow x_0}\Pr(D=1|X=x)}
	\end{equation*}
	exists and equals to $\theta$.
\end{proposition}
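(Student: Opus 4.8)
The plan is to handle the sharp-design identification first and then the fuzzy-design identification, each in two directions where needed.

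\emph{Sharp design, existence and value.} In the sharp design $D = \one\{X \geq x_0\}$, so $Y = Y_1$ when $X \geq x_0$ and $Y = Y_0$ when $X < x_0$. Hence $\E[Y|X=x] = \E[Y_1|X=x]$ for $x > x_0$ and $\E[Y|X=x] = \E[Y_0|X=x]$ for $x < x_0$. Assumption (M1)(i) guarantees that $x \mapsto \E[Y_j|X=x]$ is monotone near $x_0$ for $j=0,1$, and a bounded monotone function has one-sided limits everywhere; thus $\lim_{x\downarrow x_0}\E[Y|X=x]$ and $\lim_{x\uparrow x_0}\E[Y|X=x]$ both exist. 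By (OC), the right limit of $\E[Y_1|X=x]$ at $x_0$ equals $\E[Y_1|X=x_0]$ and the left limit of $\E[Y_0|X=x]$ equals $\E[Y_0|X=x_0]$. Subtracting, the right-minus-left difference equals $\E[Y_1|X=x_0] - \E[Y_0|X=x_0] = \theta$.

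\emph{Sharp design, the converse under (M2).} Suppose the difference in Eq.~(\ref{eq:id}) equals $\theta = \E[Y_1-Y_0|X=x_0]$. Write $a = \lim_{x\downarrow x_0}\E[Y_1|X=x]$ and $b = \lim_{x\uparrow x_0}\E[Y_0|X=x]$; these exist by (M1). Monotonicity forces a one-sided inequality between each limit and the value at $x_0$: e.g. in the non-decreasing case $a \geq \E[Y_1|X=x_0]$ and $b \leq \E[Y_0|X=x_0]$. The hypothesis says $a - b = \E[Y_1|X=x_0] - \E[Y_0|X=x_0]$, i.e. $(a - \E[Y_1|X=x_0]) + (\E[Y_0|X=x_0] - b) = 0$; since both summands are nonnegative they must each be zero, giving right-continuity of $\E[Y_1|\cdot]$ and left-continuity of $\E[Y_0|\cdot]$ at $x_0$, which is (OC). The non-increasing case is symmetric. (Here (M2) is what pins down which sides of the inequalities line up; I would spell out that the sign of the monotonicity and the sign in (M2) are assumed consistent.)

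\emph{Fuzzy design.} Now $D$ is no longer deterministic, but (CI) gives $\E[Y|X=x] = \E[Y_1|X=x]\Pr(D=1|X=x) + \E[Y_0|X=x]\Pr(D=0|X=x)$, which rearranges to $\E[Y|X=x] = \E[Y_0|X=x] + \E[Y_1-Y_0|X=x]\,p(x)$ with $p(x)=\Pr(D=1|X=x)$. Taking one-sided limits at $x_0$ — all of which exist by (M1), monotonicity of $p$, and the assumed continuity of $x\mapsto\E[Y_0|X=x]$ and $x\mapsto\E[Y_1-Y_0|X=x]$ — the $\E[Y_0|\cdot]$ terms cancel in the numerator difference and the common continuous value of $\E[Y_1-Y_0|\cdot]$ factors out, so the numerator equals $\theta\cdot(p_+ - p_-)$ where $p_\pm$ are the one-sided limits of $p$. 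By (RD), $p_+ - p_- \neq 0$, so the ratio is well-defined and equals $\theta$.

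\emph{Main obstacle.} The routine analytic content (existence of limits from monotonicity, algebraic rearrangement) is easy; the only delicate point is the converse in the sharp case, where I must argue carefully that under (M2) the two monotonicity inequalities have the correct orientation so that their sum of nonnegative terms vanishing forces each to vanish — essentially checking that (M2) rules out the degenerate configuration where, say, $\E[Y_1|X=x_0]$ sits strictly above its right limit while an equal-and-opposite slack appears elsewhere with the wrong sign. I expect that to be a few lines of case analysis on the direction of monotonicity rather than a genuine difficulty.
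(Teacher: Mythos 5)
Your proposal is correct and follows essentially the same route as the paper's proof: the forward sharp-design step is the identical substitution of $Y_1$, $Y_0$ via $D=\one\{X\geq x_0\}$ plus (OC); your converse argument (two nonnegative slacks summing to zero) is just an algebraic rewriting of the paper's chain of inequalities under (M2); and the fuzzy-design step you spell out is exactly the standard decomposition $\E[Y|X=x]=\E[Y_0|X=x]+\E[Y_1-Y_0|X=x]\,p(x)$ from \cite{hahn2001identification}, Theorem 2, which the paper simply cites. Your parenthetical worry about the orientation of the monotonicity inequalities is the right thing to flag, and is resolved as you suspect: (M1)/(M2) are read as imposing a common direction of monotonicity on both conditional means, so the two slacks have the same sign.
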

\begin{proof}
	Since treatment of non-decreasing and non-increasing cases is similar, we focus only on the former. Under (M1), \cite{rudin1964principles}, Theorem 4.29, ensures that all limits in Eq.~(\ref{eq:id}) and Eq.~(\ref{eq:id2}) exist. In the sharp design
	\begin{equation*}
		\begin{aligned}
			\theta & = \E[Y_1 - Y_0|X=x_0] \\
			& = \lim_{x\downarrow x_0}\left(\E[Y_1|X=x] - \E[Y_0|X=-x]\right) \\
			& = \lim_{x\downarrow x_0}\E[Y|X=x] - \lim_{x\uparrow x_0}\E[Y|X=x],
		\end{aligned}
	\end{equation*}
	where the second line follows under Assumption (OC), and the third since for any $x>0$
	\begin{equation*}
		\E[Y|X=x] = \E[Y_1|X=x]\qquad \text{and}\qquad  \E[Y|X=-x] = \E[Y_0|X=-x],
	\end{equation*}
	which itself is a consequence of $Y=DY_1 + (1-D)Y_0$ and $D=\one\{X\geq x_0\}$. Now suppose that (M1) and (M2) are satisfied and that $\theta$ coincides with the expression in Eq.~(\ref{eq:id}). Then under (M1)
	\begin{equation*}
		\E[Y_1|X=c]\leq \lim_{x\downarrow x_0}\E[Y_1|X=x]=\lim_{x\downarrow x_0}\E[Y|X=x]
	\end{equation*}
	and 
	\begin{equation*}
		\lim_{x\uparrow x_0}\E[Y|X=x]=\lim_{x\uparrow x_0}\E[Y_0|X=x]\leq \E[Y_0|X=x_0].
	\end{equation*}
	Combining the two inequalities under (M2)
	\begin{equation*}
		\lim_{x\uparrow x_0}\E[Y|X=x]\leq \E[Y_0|X=x_0]  \leq \E[Y_1|X=x_0]\leq \lim_{x\downarrow x_0}\E[Y|X=x].
	\end{equation*}
	Finally, $\theta$ is defined as the difference of inner quantities and also equals to the difference of outer quantities by assumption, which is possible only if (OC) holds, i.e,
	\begin{equation*}
		\lim_{x\uparrow x_0}\E[Y|X=x]= \E[Y_0|X=x_0]  \leq \E[Y_1|X=x_0]= \lim_{x\downarrow x_0}\E[Y|X=x].
	\end{equation*}	
	The proof for the fuzzy design is similar to the proof of \cite{hahn2001identification}, Theorem 2, with the only difference that monotonicity ensures existence of limits, so their Assumption (RD), (i) can be dropped.
\end{proof}
Proposition~\ref{prop:identification} shows that for sharp designs, $\theta$ is identified for a slightly larger class of distributions than are typically discussed in the literature. It shows that the continuity at the cutoff of both conditional mean functions is not needed and that under monotonicity conditions (M1) and (M2), the one-sided continuity turns out to be both necessary and sufficient for the identification. We illustrate this point in Figure~\ref{fig:id}. Panel (a) shows that the causal effect $\theta$ can be identified without full continuity. Panel (b) shows that it may happen that the expression in Eq.~(\ref{eq:id}) coincides with $\theta$, yet the two conditional mean functions do not satisfy (OC). Such counterexamples are ruled out by (M2). Inspection of the proof of the Proposition~\ref{prop:identification} reveals that monotonicity can be easily relaxed if we assume instead that all limits in Eq.~(\ref{eq:id}) and Eq.~(\ref{eq:id2}) exist, in which case we recover the result of \cite{hahn2001identification} under weaker (OC) condition for the sharp design.

It is also worth mentioning that for the fuzzy design, the local monotonicity of the treatment in the running variable allows to identify the causal effect for local compliers; see \cite{hahn2001identification}.

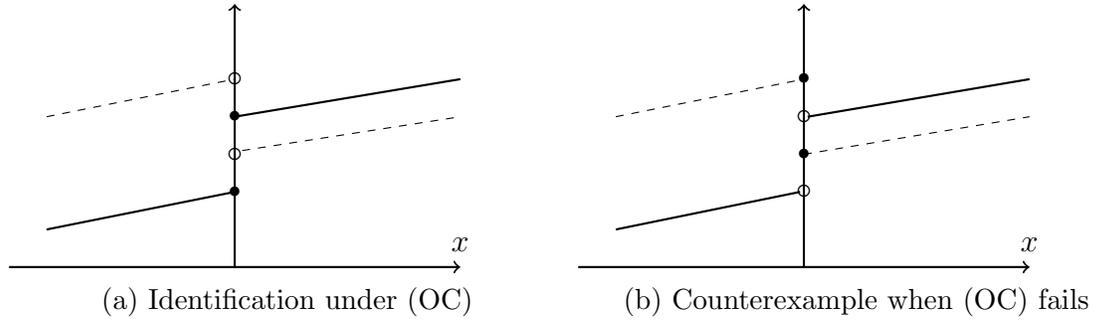
\begin{figure}[ht]
	\centering
	\begin{subfigure}[b]{0.45\textwidth}
		\begin{tikzpicture}
		\draw[thick,->] (-3,0) -- (-3,3.5);
		\draw[thick,->] (-6,0) -- (0,0);
		\draw[thick] (-5.5,0.5) -- (-3,1);
		\draw[dashed] (-2.9,1.55) -- (0,2);	
		\draw[dashed] (-5.5,2) -- (-3,2.5);
		\draw[thick] (-3,2) -- (0,2.5);
		\foreach \Point in {(-3,1), (-3,2)}{
			\node[scale=0.8] at \Point {\textbullet};
		}
		\node at (-3,1.5) {$\circ$};
		\node at (-3,2.5) {$\circ$};
		\node at (0,0.3) {$x$};
		\end{tikzpicture}
		\caption{Identification under (OC)}
	\end{subfigure}	
	\begin{subfigure}[b]{0.45\textwidth}
		\begin{tikzpicture}
		\draw[thick,->] (-3,0) -- (-3,3.5);
		\draw[thick,->] (-6,0) -- (0,0);
		\draw[thick] (-5.5,0.5) -- (-3.05,1);
		\draw[dashed] (-3,1.5) -- (0,2);	
		\draw[dashed] (-5.5,2) -- (-3,2.5);
		\draw[thick] (-2.95,2) -- (0,2.5);
		\foreach \Point in {(-3,1.5), (-3,2.5)}{
			\node[scale=0.8] at \Point {\textbullet};
		}
		\node at (-3,1) {$\circ$};
		\node at (-3,2) {$\circ$};
		\node at (0,0.3) {$x$};
		\end{tikzpicture}
		\caption{Counterexample when (OC) fails}
	\end{subfigure}
	\caption{Identification in the sharp RDD. The thick line represents $\E[Y_0|X=x],x<0$ and $\E[Y_1|X=x],x\geq0$ while the dashed line represents $\E[Y_1|X=x],x<0$ and $\E[Y_0|X=x],x\geq 0$. The thick line coincides with $x\mapsto \E[Y|X=x]$.}
	\label{fig:id}
\end{figure}

\section{Additional technical results}
The proof of Theorem~\ref{thm:isotonic_clt} is based on the argmax continuous mapping theorem, \cite{kim1990cube}, one of the conditions of which is that the argmax is a uniformly tight sequence of random variables. In our setting it is sufficient to show that the argmax of 
\begin{equation*}
	\mathbb{M}_{n1} (s) \triangleq \left(n^{-1/3}u + m(0)\right)[F_n(s+g)-F_n(g)] - [M_n(s+g)-M_n(g)],\qquad s\in[0,1]
\end{equation*}
is $O_P(n^{-1/3})$ for $a\in(0,1/3)$, where $g>0$ is arbitrary small, and that the argmax of 
\begin{equation*}
	\mathbb{M}_{n2}(s) \triangleq (n^{(a-1)/2}u + m(0))F_n(s) - M_n(s),\qquad s\in[0,1]
\end{equation*}
is $O_P(n^{-a})$ for $a\in[1/3,1)$. The following lemma serves this purpose.

\begin{lemma}\label{lemma:tightness}
	Suppose that Assumption~\ref{as:dgp} is satisfied. Then
	\begin{itemize}
		\item[(i)] For $a \in(0,1/3)$ and $u\in \R$ and every $g>0$, $\argmax_{s\in[-g,1-g]}\mathbb{M}_{n1} (s) = O_P(n^{-1/3})$.
		\item[(ii)] For $a=1/3$ and $u\in\R$, $\argmax_{s\in[0,1]}\mathbb{M}_{n2} (s) = O_P(n^{-1/3})$.
		\item[(iii)] For $a\in(1/3,1]$ and $u<0$, $\argmax_{s\in[0,1]}\mathbb{M}_{n2} (s) = O_P(n^{-a})$.
	\end{itemize}
\end{lemma}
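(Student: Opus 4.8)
The plan is to prove all three parts with one slicing (``peeling'') argument, which is what is needed because, for $a\in(1/3,1)$, the quadratic term governing the usual rate theorems of \cite{kim1990cube} and \cite{van2000weak}, Theorem~3.2.5, has vanished. Put $r_n=n^{-1/3}$ in parts (i)--(ii) and $r_n=n^{-a}$ in part (iii); write $\mathbb M_n$ for $\mathbb M_{n1}$ in part (i) and for $\mathbb M_{n2}$ in parts (ii)--(iii), let $\hat s_n$ be its argmax over the stated domain, and decompose $\mathbb M_n=D_n+\mathbb G_n$ into the deterministic drift $D_n=\E\mathbb M_n$ and the centered empirical part $\mathbb G_n=\mathbb M_n-\E\mathbb M_n$. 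Both pieces vanish at the base point $s_0$ (which is $s_0=0$ for $\mathbb M_{n2}$, using $F_n(0)=M_n(0)=0$ a.s.). Fix $M>0$ and cover $\{|\hat s_n-s_0|>Mr_n\}$ by the dyadic shells $S_{n,j}=\{2^{j-1}Mr_n<|\hat s_n-s_0|\le 2^jMr_n\}$, $j\ge 1$, truncated at the endpoints of the domain. On $S_{n,j}$, the optimality inequality $\mathbb M_n(\hat s_n)\ge\mathbb M_n(s_0)=0$ together with the location of $\hat s_n$ gives
\[
\sup_{|s-s_0|\le 2^jMr_n}\bigl\{\mathbb G_n(s)-\mathbb G_n(s_0)\bigr\}\ \ge\ -\sup_{2^{j-1}Mr_n<|s-s_0|\le 2^jMr_n}\bigl\{D_n(s)-D_n(s_0)\bigr\}.
\]

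Two estimates then close the argument. \emph{Curvature of $D_n$:} in parts (i)--(ii), Assumption~\ref{as:dgp}(iv) supplies a positive derivative of $m$ near $0$ (inherited at the slowly shrinking location $cn^{-a}$ that enters through the change of variables), so that for $M$ large enough $D_n(s)-D_n(s_0)\le -c_0|s-s_0|^2$ on the whole relevant range, the $u$-linear part of $D_n$, of size $O(n^{-1/3}|s-s_0|)$, being negligible against the quadratic term on the shells $|s-s_0|\ge Mn^{-1/3}$ (for $a\in(1/3,1)$ one replaces differentiability by the H\"older bound with $\gamma>(1-a)/2a$ to keep this bias term subdominant). In part (iii) the quadratic term is gone; what remains is that monotonicity of $m$ makes the bias in $D_n$ have the favorable sign, leaving the $u$-linear term in control, $D_n(s)-D_n(s_0)\le -c_0\,n^{(a-1)/2}|s-s_0|$, which is negative precisely because $u<0$ — for $u\ge 0$ this drift would point outward and $\hat s_n$ would not be tight, which is exactly why part (iii) is stated only for $u<0$. \emph{Modulus of $\mathbb G_n$:} up to the negligible piece $n^{(a-1)/2}u(F_n-F)$, the increment $\mathbb G_n(s)-\mathbb G_n(s_0)$ is an empirical process indexed by the VC-subgraph class $\{(y,x)\mapsto(y-m(0))\one_{[0,s]}(x)\}$ with square-integrable envelope, so the Kim--Pollard maximal inequality (\cite{kim1990cube}, p.~199) yields $\E\sup_{|s-s_0|\le\rho}|\mathbb G_n(s)-\mathbb G_n(s_0)|\lesssim n^{-1/2}\sqrt\rho$, Assumptions~\ref{as:dgp}(i),(iii) bounding the envelope's second moment over the shrinking window and $F(\rho)\lesssim\rho$.

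Inserting the curvature and modulus bounds into the display with $\rho=2^jMr_n$ and applying Markov's inequality — after substituting the value of $r_n$, at which point every explicit power of $n$ cancels — gives $\Pr(S_{n,j})\lesssim M^{-3/2}2^{-3j/2}$ in parts (i)--(ii) and $\Pr(S_{n,j})\lesssim M^{-1/2}2^{-j/2}$ in part (iii). There are only $O(\log n)$ shells and both bounds are summable geometric series in $j$, so $\Pr(|\hat s_n-s_0|>Mr_n)\lesssim M^{-3/2}$, resp.\ $M^{-1/2}$, uniformly in $n$; letting $M\to\infty$ gives $\hat s_n-s_0=O_P(r_n)$, and since $s_0=O(r_n)$ this is the claim. (For $\mathbb M_{n1}$, $g$ — equivalently $cn^{-a}$ — plays the role of an interior-type location where $m$ is differentiable, so $D_n$ again has quadratic curvature with an $n$-independent coefficient; the rate $n^{-1/3}$ then follows exactly as above, and could even be obtained from \cite{van2000weak}, Theorem~3.2.5, once one checks the uniformity of $c_0$ and of the maximal-inequality constant as $cn^{-a}\to0$.)

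I expect part (iii) to be the crux. Once the quadratic term vanishes the drift decays only \emph{linearly} in the deviation and, worse, with an \emph{$n$-dependent} coefficient $n^{(a-1)/2}$, so no off-the-shelf M-estimation rate lemma applies and the peeling must be run by hand; the delicate point is that the three powers of $n$ at play — from $r_n=n^{-a}$, from the $n^{-1/2}$ in the maximal inequality, and from the $n^{(a-1)/2}$ drift coefficient — must balance exactly, and it is this balance that forces $a>1/3$ (to keep the bias subdominant) and $u<0$ (to get the drift's sign right). The remaining work — checking in parts (i)--(ii) that the drift's maximizer is $O(n^{-1/3})$ and that the quadratic lower bound on $-D_n$ holds uniformly once $M$ is large, and verifying the VC-subgraph and envelope conditions behind the maximal inequality — is routine given Assumption~\ref{as:dgp}.
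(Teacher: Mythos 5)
Your proposal is correct and, for part (iii) — the crux — it is essentially identical to the paper's argument: the paper also runs a dyadic ``peeling device'' on the shells $S_j=\{2^{j-1}<s\le 2^j\}$ after rescaling by $n^{-a}$, uses $u<0$ and the sign of the bias term from monotonicity, and balances the Kim--Pollard modulus $n^{-1/2}\sqrt{\rho}$ against the linear drift to get the summable bound $\sum_j 2^{-j/2}$. The only difference is in parts (i)--(ii), where the paper first proves consistency of the argmax via \cite{van2000weak}, Corollary 3.2.3, and then invokes the packaged rate theorem (\cite{van2000weak}, Theorem 3.2.5, with modulus $\phi_n(\delta)=\delta^{1/2}+n^{a/2}\delta$), whereas you unroll that theorem into the same peeling scheme; this is equivalent, provided you do verify (as you flag) that the quadratic lower bound on $-D_n$ holds on the entire domain and not just locally, which is exactly the role the separate consistency step plays in the paper.
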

\begin{proof}
	\textbf{Case (i): $a\in(0,1/3)$.}
	Put $\mathbb{M}_{1}(s) \triangleq m(g)[F(s+g)-F(g)] - [M(s+g)-M(g)]$ with $M(s)=\int_0^{F(s)}m(F^{-1}(u))\dx u$. For $s_0 = 0$, $\mathbb{M}_1(s_0) = \mathbb{M}_{n1} (s_0) = 0$. By Taylor's theorem, there exists $\xi_{1s}\in(F(s),F(g+s))$ such that
	\begin{align*}
		M(g + s) - M(s) &=\int_{F(g)}^{F(g+s)}m(F^{-1}(z))\dx z\\& = m(g)[F(g+s)-F(s)]+\frac{1}{2}\frac{m'(F^{-1}(\xi_{1s}))}{f(F^{-1}(\xi_{1s}))}(F(g+s)-F(s))^2\\
		&=m(g)[F(g+s)-F(s)]+\frac{1}{2}\frac{m'(F^{-1}(\xi_{1s}))}{f(F^{-1}(\xi_{1s}))}f^2(\xi_{2s})s^2,
	\end{align*}
	where the second line follows by the mean-value theorem for some $\xi_{2s}\in(0,s)$. Then for every $s$ in the neighborhood of $s_0$
	\begin{equation*}
		\begin{aligned}
			\mathbb{M}_1(s) - \mathbb{M}_1(s_0) & = m(g)[F(g+s)-F(s)] - [M(g+s)-M(s)]\\
			&=-\frac{1}{2}\frac{m'(F^{-1}(\xi_{1s}))}{f(F^{-1}(\xi_{1s}))}f^2(\xi_{2s})s^2 \lesssim-s^2
		\end{aligned}
	\end{equation*}
	since under Assumption~\ref{as:dgp} $f$ is bounded away from zero and infinity and $m'$ is finite in  the neighborhood of zero. Next we will bound the modulus of continuity for some $\delta>0$
	\begin{equation}\label{eq:tightness1}
		\begin{aligned}
			\E\sup_{|s|\leq\delta}\left|\mathbb{M}_{n1} (s) - \mathbb{M}_1(s)\right| & \leq \E\Bigg[\sup_{|s|\leq\delta}\left|M_n(s+g) - M_n(g)-M(s+g)+M(g)\right. \\
			&\qquad\quad - \left.[m(0)+n^{-1/3}u](F_n(s+g)-F_n(g)-F(s+g)+F(g))|\right.\Bigg] \\
			&\quad + (n^{-1/3}|u| + |m(0) - m(g)|)|F(g+\delta)-F(g)| \\
			& \leq \E\sup_{|s|\leq\delta}\left|(P_n - P)h_s\right| + \E\sup_{|s|\leq\delta}|R_n(s)| + O((n^{-1/3} + g)\delta),
		\end{aligned}
	\end{equation}
	where $h_s\in\mathcal{H}_\delta = \{h_s(y,x) = (y-m(0))[\one_{[0,s+g]}(x)-\one_{[0,g]}(x)]:\; s\in[0,\delta] \}$ and $R_n(s) = n^{-1/3}u(F_n(s+g)-F_n(g)-F(s+g)+F(g))$. By the maximal inequality, \cite{kim1990cube}, p.199, the first term in the upper bound in Eq.~(\ref{eq:tightness1}) is $\E\sup_{|s|\leq\delta}\left|(P_n - P)h_s\right| \leq n^{-1/2}J(1)\sqrt{PH_\delta^2}$, where $J(1)$ is the uniform entropy integral, which is finite since $\mathcal{H}_\delta$ is a VC-subgraph class of functions with VC index 2, $H_\delta(y,x) = 	|y-m(0)|\one_{[g,g+\delta]}(x)$ is the envelop of $\mathcal{H}_\delta$, and
	\begin{equation*}
		\begin{aligned}
			PH_\delta^2 & = \E[(\sigma^2(X) + |m(X) - m(0)|^2)\one_{[g,g+\delta]}(X)] \\
			& = \int_{g}^{g+\delta}(\sigma^2(x) + |m(x) - m(0)|^2)\dx F(x) = O(\delta).
		\end{aligned}
	\end{equation*}
	Next, by the maximal inequality
	\begin{equation*}
		\begin{aligned}
			\E\sup_{|s|\leq\delta}|R_n(s)| & \leq n^{-1/3}|u|\E\sup_{|s|\leq\delta}|F_n(s+g)-F_n(g)-F(s+g)+F(g)| \\
			& \leq n^{-1/3}n^{-1/2}J(1)\sqrt{PH_\delta^2}|u| = O(n^{-5/6}\delta^{1/2}),
		\end{aligned}
	\end{equation*}
	where $J(1)<\infty$ is the uniform entropy integral and $H_\delta(x) = \one_{[g,g+\delta]}(x)$ is the envelop. Next, setting $\delta=1$, we get $\sup_{s\in[-g,1-g]}\left|\mathbb{M}_{n1} (s) - \mathbb{M}_1(s)\right| = o_P(1).$ Since $m(0)<m(x)$ and $f(x)>0$ for all $x\in(0,1]$, the function $s\mapsto \mathbb{M}_1(s)$ is strictly decreasing with a maximum achieved at $-g$, whence by \cite{van2000weak}, Corollary 3.2.3 (i), $\argmax_{t\in[-g,1-g]}\mathbb{M}_{n1} (t) = o_P(1).$ Then $\phi_n(\delta) = \delta^{1/2} + n^{1/6}\delta$ is a good modulus of continuity function for $a =3/2$ and $r_n = n^{1/3}$. Indeed, for this choice $\delta\mapsto\phi_n(\delta)/\delta^a $ is decreasing and $r_n^2\phi_n(r_n^{-1}) = O(n^{1/2})$. Therefore, the result follows by \cite{van2000weak}, Theorem 3.2.5.
	\paragraph{Case (ii): $a=1/3$.} Put $\mathbb{M}_2(s) \triangleq m(0)F(s) - M(s)$ with $M(s)=\int_0^{F(s)}m(F^{-1}(u))\dx u$. For $s_0 = 0$, $\mathbb{M}_2(s_0) = \mathbb{M}_{n2}(s_0) = 0$. By Taylor's theorem, there exists $\xi_{1s}\in(0,F(s))$ such that
	\begin{align*}
		M(s) &=\int_0^{F(s)}m(F^{-1}(u))\dx u\\& = m(0)F(s)+\frac{1}{2}\frac{m'(F^{-1}(\xi_{1s}))}{f(F^{-1}(\xi_{1s}))}(F(s))^2\\
		&=m(0)F(s)+\frac{1}{2}\frac{m'(F^{-1}(\xi_{1s}))}{f(F^{-1}(\xi_{1s}))}f^2(\xi_{2s})s^2,
	\end{align*}
	where the second line follows by the mean-value theorem for some $\xi_{2s}\in(0,s)$. Then
	\begin{equation*}
		\begin{aligned}
			\mathbb{M}_2(s) - \mathbb{M}_2(s_0) &=m(0)F(s) - M(s)\\
			&=-\frac{1}{2}\frac{m'(F^{-1}(\xi_{1s}))}{f(F^{-1}(\xi_{1s}))}f^2(\xi_{2s})s^2 \lesssim-s^2.
		\end{aligned}
	\end{equation*}
	
	Next we will bound the modulus of continuity for some $\delta>0$
	\begin{equation}\label{eq:tightness}
		\begin{aligned}
			\E\sup_{|s|\leq\delta}\left|\mathbb{M}_{n2}(s) - \mathbb{M}_2(s)\right| & \leq \E\sup_{|s|\leq\delta}\left|M_n(s) - M(s) - (m(0) + n^{(a-1)/2}u)(F_n(s)-F(s))\right| \\
			&\qquad\qquad + n^{(a-1)/2}|u|F(\delta) \\
			& \leq \E\sup_{|s|\leq\delta}\left|(P_n - P)g_s\right| + \E\sup_{|s|\leq\delta}|R_n(s)| + O(n^{(a-1)/2}\delta),
		\end{aligned}
	\end{equation}
	where $g_s\in\mathcal{G}_\delta = \{g_s(y,x) = (y-m(0))\one_{[0,s]}(x):\; s\in[0,\delta] \}$ and $R_n(s) = n^{(a-1)/2}u(F_n(s)-F(s).$ By the maximal inequality \cite{kim1990cube}, p.199, the first term in the upper bound in Eq.~(\ref{eq:tightness}) is $\E\sup_{|s|\leq\delta}\left|(P_n - P)g_s\right| \leq n^{-1/2}J(1)\sqrt{PG_\delta^2},$ where $J(1)$ is the uniform entropy integral, which is finite since $\mathcal{G}_\delta$ is a VC-subgraph class of functions with VC-index 2, $G_\delta(y,x) = 	|y-m(0)|\one_{[0,\delta]}(x)$ is the envelop of $\mathcal{G}_\delta$, and
	\begin{equation*}
		\begin{aligned}
			PG_\delta^2 & = \E[\sigma^2(X)\one_{[0,\delta]}(X)] + \E[|m(X) - m(0)|^2\one_{[0,\delta]}(X)]\\
			& = \int_0^\delta(\sigma^2(x) + |m(X) - m(0)|^2)\dx F(x)  = O(\delta).
		\end{aligned}
	\end{equation*}
	Next, by the maximal inequality
	\begin{equation*}
		\begin{aligned}
			\E\sup_{|s|\leq\delta}|R_n(s)| & \leq n^{(a-1)/2}|u|\E\sup_{|s|\leq\delta}|F_n(s) - F(s)| \leq n^{(a-1)/2}n^{-1/2}J(1)\sqrt{PH_\delta^2}|u| = O(n^{(a-2)/2}\delta^{1/2}),
		\end{aligned}
	\end{equation*}
	where $J(1)<\infty$ is the uniform entropy integral and $H_\delta(x) = \one_{[0,\delta]}(x)$ is the envelop. Next, setting $\delta=1$, we get $\sup_{s\in[0,1]}\left|\mathbb{M}_{n2}(s) - \mathbb{M}_2(s)\right| = o_P(1).$ Since $m(0)<m(x)$ and $f(x)>0$ for all $x\in(0,1]$, the function $s\mapsto \mathbb{M}_2(s)$ is strictly decreasing with maximum achieved at $0$, whence by \cite{van2000weak}, Corollary 3.2.3 (i), $\argmax_{t\in[0,1]}\mathbb{M}_{n2}(t) = o_P(1).$ Then the modulus of continuity is $\phi_n(\delta) = \delta^{1/2} + n^{a/2}\delta$. This is a good modulus of the continuity function for $\alpha=3/2$ and $r_n = n^{1/3}$. For this choice $\delta\mapsto\phi_n(\delta)/\delta^\alpha$ is decreasing and $r_n^2\phi_n(r_n^{-1}) = O(n^{1/2})$. Therefore, the result follows by \cite{van2000weak}, Theorem 3.2.5.
	
	\paragraph{Case (iii): $a\in(1/3,1]$} Here \cite{van2000weak}, Theorem 3.2.5 gives the order $O_P(n^{-1/3})$ only, so we will show directly using the "peeling device" that after the change of variables
	\begin{equation*}
		\argmax_{s\in[0,n^a]}\left\{n^{(1-a)/2}\left(m(0)F_n(sn^{-a}) - M_n(sn^{-a})\right) + uF_n(sn^{-a})\right\} = O_P(1).
	\end{equation*}
	Denote the process inside of the argmax as
	\begin{equation*}
		\begin{aligned}
			Z_{n2}(s) & \triangleq n^{(1-a)/2}\left(m(0)F_n(sn^{-a}) - M_n(sn^{-a})\right) + n^auF_n(sn^{-a}).
		\end{aligned}
	\end{equation*}
	
	Decompose $Z_{n2} = I_{n2} + II_{n2} + III_{n2} + IV_{n2}$ similarly as in the proof of Theorem~\ref{thm:isotonic_clt} (with $c=1$). Next, partition the set $[0,\infty)$ into intervals $S_{j} = \left\{s:\; 2^{j-1}<s\leq 2^j \right\}$ with $j$ ranging over integers. Then if the argmax exceeds $2^K$, it will be located in one of the intervals $S_j$ with $j\geq K$ and $2^{j-1}\leq n^a$. Therefore, using the fact that $u<0,II_{n2}\leq 0$, and $Z_{n2}(0)=0$
	\begin{equation*}
		\begin{aligned}
			\Pr\left(\argmax_{s\in[0,n^a]}Z_{n2}(s) > 2^K \right) & \leq \sum_{\substack{j\geq K \\ 2^{j-1}\leq n^a}} \Pr\left(\sup_{s\in S_j}Z_{n2}(s) \geq 0\right) \\
			& \leq \sum_{\substack{j\geq K \\ 2^{j-1}\leq n^a}} \Pr\left(\sup_{s\in S_j}\left|I_{n2}(s) + III_{n2}(s)\right| \geq -n^auF(2^{j} n^{-a})\right) \\
			& \leq \sum_{\substack{j\geq K \\ 2^{j-1}\leq n^a}}\frac{1}{-un^aF(2^{j} n^{-a})}\E\left[\sup_{s\in S_j}|I_{n2}(s) + III_{n2}(s)|\right] \\
			& \lesssim \sum_{\substack{j\geq K \\ 2^{j-1}\leq n^a}}\frac{1}{-un^aF(2^{j} n^{-a})}\left\{2^{j/2} + n^{(a-1)/2}2^{j/2}  \right\} \lesssim \sum_{\substack{j\geq K}}2^{-j/2}, \\
		\end{aligned}
	\end{equation*}
	where the third line follows by Markov's inequality and the fourth by the maximal inequality; cf. Theorem~\ref{thm:isotonic_clt}. The last expression can be made arbitrarily small by the choice of $K$.
\end{proof}

The following lemma shows tightness of the argmax of the bootstrapped process:
\begin{equation*}
	\mathbb{M}_{n}^*(s)\triangleq n^{(1-a)/2}\left(\hat m(n^{-a})F_n(sn^{-a}) - M_n^*(sn^{-a})\right) + n^auF_n(sn^{-a}),\quad s\in[0,n^a].
\end{equation*}

\begin{lemma}\label{lemma:tightness_bootstrap}
	Suppose that assumptions of Theorem~\ref{thm:bootstrap} are satisfied. Then for every $a\in(1/3,1]$ and $u<0$
	\begin{equation*}
		\argmax_{s\in[0,n^a]}\mathbb{M}_{n}^*(s) = O_P(1).
	\end{equation*}
\end{lemma}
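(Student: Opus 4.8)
The plan is to run the dyadic ``peeling'' argument of Case~(iii) of Lemma~\ref{lemma:tightness}, but now conditionally on the data and with the bootstrap multiplier processes in place of the centered empirical process. First I would decompose, exactly as in the proof of Theorem~\ref{thm:bootstrap} with $c=1$,
\begin{equation*}
\mathbb{M}_n^*(s) = I_n^*(s) + II_n^*(s) + III_n^*(s) + IV_n^*(s),
\end{equation*}
where $I_n^*$ is the multiplier empirical process built from $\eta_i^*\varepsilon_i\one_{[0,sn^{-a}]}(X_i)$, $II_n^*$ is the degenerate multiplier process built from $\eta_i^*(\tilde m(X_i)-m(X_i))\one_{[0,sn^{-a}]}(X_i)$, $III_n^*(s)\propto\int_0^{sn^{-a}}(\hat m(n^{-a})-\tilde m(x))\dx F_n(x)$, and $IV_n^*(s)\propto uF_n(sn^{-a})$. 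The two sign facts that drive the argument are that $III_n^*(s)\le 0$ for every $s$ --- since $\tilde m\equiv\hat m(n^{-a})$ on $[0,n^{-a}]$ and $\tilde m(x)=\hat m(x)\ge\hat m(n^{-a})$ for $x>n^{-a}$ by monotonicity of $\hat m$ --- and $IV_n^*(s)<0$ for $s>0$ because $u<0$; together they play the role of the nonpositive quadratic drift $II_{n2}$ in Lemma~\ref{lemma:tightness}(iii).

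Next I would partition $[0,\infty)$ into dyadic blocks $S_j=\{s:\,2^{j-1}<s\le 2^j\}$. Since $\mathbb{M}_n^*(0)=0$ and the argmax over the compact set $[0,n^a]$ is attained, if it exceeds $2^K$ it lies in some $S_j$ with $j\ge K$ and $2^{j-1}\le n^a$, and then $\sup_{s\in S_j}\mathbb{M}_n^*(s)\ge 0$. Using $III_n^*(s)\le0$ and $IV_n^*(s)\le IV_n^*(2^{j-1})$ on $S_j$ (monotonicity of $F_n$ and $u<0$), this forces $\sup_{s\in S_j}|I_n^*(s)+II_n^*(s)|\ge -IV_n^*(2^{j-1})\asymp(-u)\,n^aF_n(2^jn^{-a})$, so conditional Markov gives
\begin{equation*}
\mathrm{Pr}^*\!\left(\argmax_{s\in[0,n^a]}\mathbb{M}_n^*(s)>2^K\right)\;\lesssim\;\sum_{\substack{j\ge K\\ 2^{j-1}\le n^a}}\frac{\E^*\!\left[\sup_{s\in S_j}\bigl|I_n^*(s)+II_n^*(s)\bigr|\right]}{(-u)\,n^aF_n(2^jn^{-a})}.
\end{equation*}
Applying the maximal inequality for (multiplier) empirical processes block by block, as in the proof of Theorem~\ref{thm:bootstrap}, yields
\begin{equation*}
\E^*\!\left[\sup_{s\in S_j}|I_n^*(s)|\right]\lesssim\sqrt{n^{a-1}\!\sum_{i=1}^n\varepsilon_i^2\one\{X_i\le 2^jn^{-a}\}},\qquad
\E^*\!\left[\sup_{s\in S_j}|II_n^*(s)|\right]\lesssim\Bigl(\sup_{x\le 2^jn^{-a}}|\tilde m(x)-m(x)|\Bigr)\sqrt{n^aF_n(2^jn^{-a})}.
\end{equation*}

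It then remains to control the three data-dependent quantities $n^aF_n(2^jn^{-a})$, $n^{a-1}\sum_i\varepsilon_i^2\one\{X_i\le 2^jn^{-a}\}$, and $\sup_{x\le 2^jn^{-a}}|\tilde m(x)-m(x)|$ \emph{uniformly} over $K\le j$ with $2^{j-1}\le n^a$. Since the sets $[0,2^jn^{-a}]$ are nested and $n\,F(2^Kn^{-a})\asymp 2^Kn^{1-a}\to\infty$ (because $a<1$), a uniform empirical-process ratio bound over the VC class $\{[0,t]:\,t\ge 2^{K-1}n^{-a}\}$ gives $n^aF_n(2^jn^{-a})\ge c'2^j$ for all such $j$ with probability tending to one; the analogous bound over $\{(\epsilon,x)\mapsto\epsilon^2\one\{x\le t\}:\,t\ge 2^{K-1}n^{-a}\}$, valid under $\E[\varepsilon^4\mid X]\le C$, gives $n^{a-1}\sum_i\varepsilon_i^2\one\{X_i\le 2^jn^{-a}\}\le C'2^j$ uniformly; and by Proposition~\ref{prop:uniform}, together with the uniform consistency of $\hat m$ on compacts bounded away from the boundary, $\sup_{x\le 2^jn^{-a}}|\tilde m(x)-m(x)|=o_P(1)$ uniformly. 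Plugging these in, on an event of probability tending to one,
\begin{equation*}
\mathrm{Pr}^*\!\left(\argmax_{s\in[0,n^a]}\mathbb{M}_n^*(s)>2^K\right)\;\lesssim\;\frac{1+o_P(1)}{-u}\sum_{j\ge K}\frac{2^{j/2}}{2^j}\;=\;\frac{1+o_P(1)}{-u}\sum_{j\ge K}2^{-j/2},
\end{equation*}
which is made arbitrarily small, uniformly in $n$, by taking $K$ large; this is precisely the (conditional) uniform tightness of the bootstrap argmax used in the proof of Theorem~\ref{thm:bootstrap} via Lemma~\ref{lemma:argmax_conditional}. The genuinely new difficulty relative to Lemma~\ref{lemma:tightness}(iii) is this final uniformity step: the degenerate term $II_n^*$ must be controlled through Proposition~\ref{prop:uniform}, which bounds $\tilde m-m$ on shrinking right-neighborhoods of the boundary, and both that bound and the lower bound on $n^aF_n(2^jn^{-a})$ must hold simultaneously across a growing number of dyadic blocks whose underlying intervals shrink with $n$, so a bona fide uniform (rather than pointwise) argument is unavoidable; everything else is a routine transcription of the non-bootstrap case.
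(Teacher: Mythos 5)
Your proposal is correct and follows essentially the same route as the paper's proof: the dyadic peeling device over blocks $S_j=\{s:2^{j-1}<s\le 2^j\}$, conditional Markov, and multiplier maximal inequalities for the VC classes indexing $I_n^*$ and $II_n^*$, exactly as in the paper's Lemma~\ref{lemma:tightness} (iii) transplanted to the bootstrap world. The only (harmless) deviations are cosmetic: you discard $III_n^*$ by observing its sign directly from the monotonicity of $\hat m$ where the paper bounds $\|III_n^*\|_{S_j}=o_P(1)$, you keep $F_n$ in the denominator via a ratio bound where the paper centers $IV_n^*$ at $n^auF(sn^{-a})$ and controls the deviation, and you are somewhat more explicit than the paper about the uniformity over the dyadic blocks of the $O_P$ and $o_P$ factors.
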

\begin{proof}
	Decompose $\mathbb{M}_{n}^* = I_{n}^* + II_{n}^* + III_{n}^* + IV_{n}^*$ similarly to the proof of Theorem~\ref{thm:bootstrap} (with $c=1$). Next, partition the set $[0,\infty)$ into intervals $S_{j} = \left\{s:\; 2^{j-1}<s\leq 2^j \right\}$ with $j$ ranging over integers. Let $\|.\|_{S_j}$ be the supremum norm over $S_j$. Then if the argmax exceeds $2^K$, it will be located in one of the intervals $S_j$ with $j\geq K$ and $2^{j-1}\leq n^a$. Therefore, using the fact that $u<0,II_{n2}\leq 0$, and $\mathbb{M}_{n}^*(0)=0$
	\begin{equation*}
		\begin{aligned}
			& \mathrm{Pr}^*\left(\argmax_{s\in[0,n^a]}\mathbb{M}_{n}^*(s) > 2^K \right) \\
			& \leq \sum_{\substack{j\geq K \\ 2^{j-1}\leq n^a}} \mathrm{Pr}^*\left(\sup_{s\in S_j}\mathbb{M}_{n}^*(s) \geq 0\right) \\
			& \leq \sum_{\substack{j\geq K \\ 2^{j-1}\leq n^a}} \mathrm{Pr}^*\left(\left\|I_{n}^* + II_n^* + III_n^* + IV_n^* - n^auF(.n^{-a})\right\|_{S_j} \geq -n^auF(2^{j} n^{-a})\right) \\
			& \leq \sum_{\substack{j\geq K \\ 2^{j-1}\leq n^a}}\frac{1}{-un^aF(2^{j} n^{-a})}\E^*\left\|I_{n}^* + II_n^* + III_n^* + IV_n^* - n^auF(.n^{-a})\right\|_{S_j} \\
			& \lesssim \sum_{\substack{j\geq K \\ 2^{j-1}\leq n^a}}\frac{1}{-un^aF(2^{j} n^{-a})}2^{j/2}O_P(1)  \lesssim \sum_{\substack{j\geq K}}2^{-j/2}O_P(1), \\
		\end{aligned}
	\end{equation*}
	where the third line follows by Markov's inequality and the fourth by computations below. The last expression is $o_P(1)$ for every $K=K_n\to\infty$. To see that all terms above are controlled as was claimed, first note that the process $I_n^*$ is a multiplier empirical process indexed by the class of functions $\mathcal{G}_n = \left\{(\epsilon,x)\mapsto -n^{a/2}\epsilon\one_{[0,n^{-a}t]}(x):\quad t\in S_j \right\}.$ This class is of VC subgraph type with VC index 2, whence by the maximal inequality
	\begin{equation*}
		\begin{aligned}
			\E^*\left[\sup_{s\in S_j}|I_n^*(t)|\right] & \lesssim  n^{a/2}\sqrt{\frac{1}{n}\sum_{i=1}^n\varepsilon_i^2\one_{[0, n^{-a}2^j]}(X_i)} =  \sqrt{n^a\E[\varepsilon^2\one_{[0,n^{-a}2^j]}(X)] + o_P(1)} =O_P(2^{j/2}),
		\end{aligned}
	\end{equation*}
	where the second line follows since $\E[\varepsilon^4|X]\leq C$. Next, it follows from the proof of Theorem~\ref{thm:bootstrap} (replacing $K$ by $2^j$) that $\E^*\left[\sup_{s\in S_j}|II_n^*(t)|\right] = o_P(2^{j/2})$
	and that $\|III_n^*\|_{S_j} = o_P(1)$. Lastly, by the maximal inequality $\sup_{s\in S_j}|IV_n^*(s) - n^auF(n^{-a}s)| = O_P(n^{(a-1)/2}2^{j/2}).$
\end{proof}

The following lemma is a conditional argmax continuous mapping theorem for bootstrapped processes.
\begin{lemma}\label{lemma:argmax_conditional}
	Suppose  that for every $K<\infty$
	\begin{enumerate}
		\item[(i)] $\sup_{h\in BL_1(\ell^\infty[0,K])}|\E^*h(Z_n^*) - \E h(Z)|\xrightarrow{P}0$;
		\item[(ii)]	$\limsup_{n\to\infty}\mathrm{Pr}^*\left(\argmax_{t\in[0,n^a]}Z_n^*(t) > K\right) = o_P(1),\qquad K\to\infty$;
		\item[(iii)] $t\mapsto Z(t)$ has unique maximizer on $[0,\infty)$, which is a tight random variable. Then
		\begin{equation*}
			\mathrm{Pr}^*\left(\argmax_{t\in[0,n^a]}Z_n^*(t)\geq z \right) \xrightarrow{P} \mathrm{Pr}\left(\argmax_{t\in[0,\infty)}Z(t)\geq z \right),\qquad \forall z>0.
		\end{equation*}
	\end{enumerate}
\end{lemma}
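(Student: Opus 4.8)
The plan is to reduce the statement, via the subsequence principle, to a pathwise application of the deterministic argmax continuous mapping theorem on a growing domain. Note first that the target $c_z := \Pr(\argmax_{t\in[0,\infty)}Z(t)\geq z)$ is a \emph{constant}, since $Z$ does not depend on the data. Hence it suffices to show that every subsequence of $\{\Pr^*(\argmax_{t\in[0,n^a]}Z_n^*(t)\geq z)\}_n$ admits a further subsequence that converges to $c_z$ almost surely.

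So I would fix an arbitrary subsequence and use hypothesis (i): for each positive integer $K$ the quantity $D_n^{(K)} := \sup_{h\in BL_1(\ell^\infty[0,K])}|\E^*h(Z_n^*) - \E h(Z)|$ tends to zero in probability, so a diagonal extraction over $K$ yields a further subsequence $(n')$ along which $D_{n'}^{(K)}\to 0$ almost surely for every $K$; on this event $Z_{n'}^*\rightsquigarrow Z$ in $\ell^\infty[0,K]$ under $\Pr^*$ for each $K$. Independently, hypothesis (ii) says $L(K):=\limsup_n\Pr^*(\argmax_{t\in[0,n^a]}Z_n^*(t)>K)$ tends to zero in probability as $K\to\infty$, so I can pick a deterministic sequence $K_j\uparrow\infty$ with $L(K_j)\to 0$ almost surely; since $\limsup_{n'}\Pr^*(\argmax_{t\in[0,(n')^a]}Z_{n'}^*(t)>K_j)\le L(K_j)$, after intersecting the null sets the argmax of $Z_{n'}^*$ does not escape to infinity along $(n')$. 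I would then fix a realization in this probability-one set and invoke the argmax continuous mapping theorem under $\Pr^*$ (\cite{kim1990cube}, Theorem~2.7, or \cite{van2000weak}, Theorem~3.2.2), whose proofs localize to compacts: weak convergence on each $[0,K_j]$, the escape bound $\limsup_{n'}\Pr^*(\argmax>K_j)\le\varepsilon_j\downarrow 0$, and the a.s.\ uniqueness and finiteness (tightness) of $\argmax_{[0,\infty)}Z$ from (iii) give $\argmax_{t\in[0,(n')^a]}Z_{n'}^*(t)\rightsquigarrow\argmax_{t\in[0,\infty)}Z(t)$ under $\Pr^*$. Finally $\argmax_{[0,\infty)}Z$ has no atoms (in the applications it is absolutely continuous by \cite{groeneboom1983}, Corollary~2.2), so $z>0$ is a continuity point of its distribution function and weak convergence upgrades to $\Pr^*(\argmax_{t\in[0,(n')^a]}Z_{n'}^*(t)\ge z)\to c_z$; since the realization was arbitrary in a set of probability one, this holds almost surely along $(n')$, which closes the subsequence argument.

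The main obstacle will be making the non-escape step interact correctly with weak convergence on compacts in the conditional setup: one has to check that the pathwise argmax theorem really needs only (a) weak convergence of $Z_{n'}^*$ on each compact and (b) a uniform-in-$n'$ control on the probability that the argmax leaves that compact, and that hypothesis (ii) supplies exactly (b) along the extracted subsequence. The accompanying measurability and subsequence bookkeeping—diagonalizing (i) in $K$, separately thinning (ii) in $K$, and intersecting the resulting null sets—is routine but should be spelled out.
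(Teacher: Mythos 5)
Your proposal is correct, but it reaches the conclusion by a genuinely different route than the paper. The paper stays entirely at the level of convergence in probability: it splits $\mathrm{Pr}^*\bigl(\argmax_{t\in[0,n^a]}Z_n^*(t)\geq z\bigr)$ into the same probability with the argmax restricted to $[0,K]$ plus a remainder $R_{n,K}$ bounded by $\mathrm{Pr}^*\bigl(\argmax_{t\in[0,n^a]}Z_n^*(t)>K\bigr)$, kills the remainder with (ii), rewrites the compact-domain event as the sup-comparison $\{\sup_{[z,K]}Z_n^*\geq\sup_{[0,K]}Z_n^*\}$, and applies the conditional continuous mapping theorem for bootstrapped processes (Kosorok, Proposition 10.7 and Lemma 10.11) before letting $K\to\infty$ via (iii). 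You instead invoke the subsequence principle, diagonalize (i) over $K$ and thin (ii) along $K_j\uparrow\infty$ to pass to a realization-wise setting, and then apply the standard (unconditional) argmax continuous mapping theorem pathwise. Both arguments are sound. Your version buys self-containedness — it needs only the classical argmax CMT rather than its conditional-weak-convergence analogue — at the price of the diagonal/Borel--Cantelli bookkeeping you correctly flag as needing to be spelled out; the paper's version is shorter because the splitting into compact part plus escape remainder is done once, directly on the probabilities, and the conditional CMT machinery is quoted wholesale. Two minor points to tighten: (1) as you note, the final step requires $z$ to be a continuity point of the law of $\argmax_{[0,\infty)}Z$ — the paper has the same implicit restriction (its convergence is stated as uniform over closed subsets of continuity points), and absolute continuity of the relevant limit laws covers the applications; (2) when you transfer the bound from (ii) to the extracted subsequence, the inequality $\limsup_{n'}\leq\limsup_{n}$ is what makes the order of extractions harmless, and it is worth stating explicitly since the $K_j$ are chosen from the full-sequence limsup before the $n$-subsequence is refined.
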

\begin{proof}
	For every $K$
	\begin{align*}
		\mathrm{Pr}^*\left(\argmax_{t\in[0,n^a]}Z_n^*(t)\geq z \right) & = \mathrm{Pr}^*\left(\argmax_{t\in[0,K]}Z_n^*(t)\geq z \right) + R_{n,K}, 
	\end{align*}
	where by (ii)
	\begin{equation*}
		\begin{aligned}
			\limsup_{n\to\infty}R_{n,K} & = \limsup_{n\to\infty}\mathrm{Pr}^*\left(\argmax_{t\in[0,K]}Z_n^*(t) < z,\argmax_{t\in[K,n^a]}Z_n^*(t)\geq z\right) \\
			& \leq \limsup_{n\to\infty}\mathrm{Pr}^*\left(\argmax_{t\in[0,n^a]}Z_n^*(t) > K\right) = o_P(1),\qquad K\to\infty,
		\end{aligned}
	\end{equation*}
	by (i) and (iii)
	\begin{equation*}
		\begin{aligned}
			\mathrm{Pr}^*\left(\argmax_{t\in[0,K]}Z_n^*(t)\geq z \right) & = \mathrm{Pr}\left(\argmax_{t\in[0,K]}Z(t)\geq z \right) + o_P(1) \\
			& = \mathrm{Pr}\left(\argmax_{t\in[0,\infty)}Z(t)\geq z \right) + o_P(1),\qquad K\to\infty.
		\end{aligned}
	\end{equation*}
	
	More precisely, we used the continuous mapping theorem for the bootstrapped process \cite{kosorok2008introduction}, Proposition 10.7:
	\begin{equation*}
		\begin{aligned}
			\mathrm{Pr}^*\left(\argmax_{t\in[0,K]}Z_n^*(t)\geq z \right) & = \mathrm{Pr}^*\left(\sup_{t\in[z,K]}Z_n^*(t) \geq \sup_{t\in[0,K]}Z_n^*(t) \right) \\
			& \xrightarrow{P} \mathrm{Pr}\left(\sup_{t\in[z,K]}Z(t) \geq \sup_{t\in[0,K]}Z(t) \right) \\
			& = \mathrm{Pr}\left(\argmax_{t\in[0,K]}Z(t)\geq z \right),
		\end{aligned}
	\end{equation*}
	where the convergence is actually uniform over $z$ in arbitrary closed subset of the set of continuity points of $z\mapsto \mathrm{Pr}\left(\argmax_{t\in[0,K]}Z(t)\geq z \right)$; see~\cite{kosorok2008introduction}, Lemma 10.11.
\end{proof}

The following result is is probabilistic statement of the fact that for monotone functions converging pointwise to a continuous limit we also have the uniform convergence.
\begin{proposition}\label{prop:uniform}
	Suppose that assumptions of Theorem~\ref{thm:isotonic_clt} are satisfied. If $m$ is continuous on $[0,1]$, then
	\begin{equation*}
		\sup_{y\in[0,1]}|\tilde m(cn^{-a}y) - m(0)|\xrightarrow{P}0.
	\end{equation*}
\end{proposition}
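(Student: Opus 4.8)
The plan is to reduce the supremum to a single random variable and then read off consistency from Theorem~\ref{thm:isotonic_clt}. By the definition of the trimmed estimator, $\tilde m$ is constant and equal to $\hat m(cn^{-a})$ on the interval $[0,cn^{-a}]$; since $cn^{-a}y\in[0,cn^{-a}]$ for every $y\in[0,1]$, we get $\tilde m(cn^{-a}y)=\hat m(cn^{-a})$ on $[0,1]$, so that
\begin{equation*}
	\sup_{y\in[0,1]}\bigl|\tilde m(cn^{-a}y)-m(0)\bigr|=\bigl|\hat m(cn^{-a})-m(0)\bigr|,
\end{equation*}
and it suffices to prove $\hat m(cn^{-a})\xrightarrow{P}m(0)$.

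For this I would split on the value of $a$. When $a\in[1/3,1)$, Theorem~\ref{thm:isotonic_clt}(ii) gives $n^{(1-a)/2}(\hat m(cn^{-a})-m(0))=O_P(1)$, hence $\hat m(cn^{-a})-m(0)=o_P(1)$. When $a\in(0,1/3)$, Theorem~\ref{thm:isotonic_clt}(i) gives $n^{1/3}(\hat m(cn^{-a})-m(cn^{-a}))=O_P(1)$, so $\hat m(cn^{-a})=m(cn^{-a})+o_P(1)$, and $m(cn^{-a})\to m(0)$ by continuity of $m$ at $0$; again $\hat m(cn^{-a})-m(0)=o_P(1)$. This proves the stated proposition.

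The same ingredients would also yield the slightly stronger uniform statement that is actually invoked in the proof of Theorem~\ref{thm:bootstrap}, namely $\sup_{y\in[0,K]}|\tilde m(cn^{-a}y)-m(cn^{-a}y)|\xrightarrow{P}0$ for fixed $K<\infty$: for $n$ large $cn^{-a}K<1$, so $\tilde m(cn^{-a}y)$ equals $\hat m(cn^{-a})$ on $[0,1]$ and equals $\hat m(cn^{-a}y)$ on $(1,K]$, and on the latter interval monotonicity of $\hat m$ pins $\hat m(cn^{-a}y)$ between $\hat m(cn^{-a})$ and $\hat m(cKn^{-a})$, both of which tend to $m(0)$ in probability by the previous paragraph (applied with constants $c$ and $cK$); combining this with $\sup_{y\in[0,K]}|m(cn^{-a}y)-m(0)|\to0$ from continuity of $m$ at $0$ gives the claim via the triangle inequality. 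There is no real obstacle here: the only point to notice is that on the shrinking interval $[0,cn^{-a}]$ the trimmed estimator is \emph{exactly} constant, so the asserted uniformity over $y\in[0,1]$ carries no content beyond pointwise consistency of $\hat m(cn^{-a})$, already contained in Theorem~\ref{thm:isotonic_clt}; the monotone-sandwiching step is needed only for the dilated-interval variant, and it is itself just the elementary fact that monotone functions converging pointwise to a continuous (here constant) limit converge uniformly on compacts.
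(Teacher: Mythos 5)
Your proof is correct, and for the statement as literally written it is more economical than the paper's. You observe that $\tilde m$ is by construction constant on $[0,cn^{-a}]$, so the supremum over $y\in[0,1]$ collapses to the single quantity $|\hat m(cn^{-a})-m(0)|$, and consistency then follows directly from Theorem~\ref{thm:isotonic_clt} (part (ii) for $a\in[1/3,1)$; part (i) together with continuity of $m$ at $0$ for $a\in(0,1/3)$). The paper instead runs the generic argument for uniform convergence of monotone functions to a continuous limit: choose a finite grid $0\leq y_1\leq\dots\leq y_p\leq1$ on which $m$ oscillates by less than $\epsilon/2$, apply pointwise consistency at each grid point, and sandwich $\tilde m$ between grid values by monotonicity. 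That machinery is superfluous for the literal claim, but it is what is needed for the version actually invoked in the bootstrap proof (uniformity over $y\in[0,K]$, and over the growing ranges in Lemma~\ref{lemma:tightness_bootstrap}); your second paragraph supplies the fixed-$K$ case via a two-endpoint sandwich, which suffices because on a shrinking interval the limit is the constant $m(0)$. The only caveat is that when the argument of $\tilde m$ does not shrink to a point (e.g.\ when $K$ is replaced by $2^j$ of order $n^a$), the relevant limit is $m(x)$ rather than $m(0)$ and the two-point sandwich must be upgraded to the paper's full $\epsilon$-grid --- this is why the hypothesis that $m$ is continuous on all of $[0,1]$ appears and why the paper writes the proof in that generality. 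Both arguments rest on the same two ingredients, pointwise consistency from Theorem~\ref{thm:isotonic_clt} and monotonicity of $\tilde m$, so the difference is one of packaging rather than substance.
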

\begin{proof}
	For every $y\in[0,1]$, by Theorem~\ref{thm:isotonic_clt}, $|\tilde m(cn^{-a}y) - m(0)| \xrightarrow{P} 0.$. Since $m$ is uniformly continuous, one can find $0\leq y_1\leq\dots\leq y_p\leq 1$ such that $|m(cn^{-a}y_{j}) - m(cn^{-a}y_{j-1})|<\epsilon/2$ for all $j=2,\dots,p$. Then on the event $\{|\tilde m(cn^{-a}y_j) - m(cn^{-a}y_j)|\leq \epsilon/2, \;\forall j=1,\dots,p \}$ by the monotonicity of $\tilde m$, for every $x$, there exists $j=2,\dots,p$ such that
	\begin{equation*}
		m(cn^{-a}x) - \epsilon \leq \tilde m(cn^{-a}y_{j-1})\leq \tilde m(cn^{-a}x)\leq \tilde m(cn^{-a}y_j) \leq m(cn^{-a}x) + \epsilon,
	\end{equation*}
	whence
	\begin{equation*}
		\begin{aligned}
			\Pr\left(|\tilde m(cn^{-a}y) - m(cn^{-a}y)| \leq \epsilon,\forall y\in[0,1]\right) \geq 1 - \sum_{j=1}^p\Pr\left(|\tilde m(cn^{-a}y_j) - m(cn^{-a}y_j)| > \epsilon/2\right).
		\end{aligned}
	\end{equation*}
	Since $p$ is fixed, the sum of probabilities tends to zero by the pointwise consistency of $\tilde m$, which gives the result as $\epsilon>0$ is arbitrary.
\end{proof}

\begin{proposition}\label{prop:optimal_constant}
	Suppose that Assumptions of Theorem~\ref{thm:isotonic_clt} are satisfied. Then for arbitrary $c>0$
	\begin{equation*}
		n^{1/3}(\hat m (cAn^{-1/3}) - m(0)) \xrightarrow{d} \left|\frac{m'(0)\sigma^2(0)}{2f(0)}\right|^{1/3}D^L_{[0,\infty)}(W_t - t^2)(c),
	\end{equation*}
	where $A=\left(\frac{2}{m'(0)}\sqrt{\frac{\sigma^2(0)}{f(0)}}\right)^{2/3}$.
\end{proposition}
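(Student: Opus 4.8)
The plan is to read this off Theorem~\ref{thm:isotonic_clt}(ii) with $a=1/3$ and then simplify the limit by the self-similarity of Brownian motion.

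First I would apply Theorem~\ref{thm:isotonic_clt}(ii) in the case $a=1/3$ with the boundary-correction constant $cA>0$ in place of $c$, which gives
\begin{equation*}
  n^{1/3}\bigl(\hat m(cAn^{-1/3}) - m(0)\bigr)\;\cw\; D^L_{[0,\infty)}\!\left(\sqrt{\tfrac{\sigma^2(0)}{cAf(0)}}\,W_t + \tfrac{cA}{2}\,m'(0)\,t^2\right)(1).
\end{equation*}
It then remains to rewrite the right-hand side in canonical form.

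The key step is a scaling identity for the functional $D^L_{[0,\infty)}$: for constants $\alpha,\beta>0$ and $s>0$,
\begin{equation*}
  D^L_{[0,\infty)}\bigl(\alpha W_t + \beta t^2\bigr)(s)\;\overset{d}{=}\;(\alpha^2\beta)^{1/3}\,D^L_{[0,\infty)}\bigl(W_t + t^2\bigr)\!\bigl((\beta/\alpha)^{2/3}s\bigr).
\end{equation*}
To prove it, put $r=(\alpha/\beta)^{2/3}$ and $\lambda=(\beta/\alpha^4)^{1/3}$; since $\{W_{rt}\}_{t\ge0}\overset{d}{=}\{\sqrt r\,W_t\}_{t\ge0}$ and $\lambda\alpha\sqrt r=\lambda\beta r^2=1$ by the choice of $r,\lambda$, we get $\{\lambda(\alpha W_{rt}+\beta r^2t^2)\}_{t\ge0}\overset{d}{=}\{W_t+t^2\}_{t\ge0}$. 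Moreover, if $\hat g$ denotes the greatest convex minorant of $g$ on $[0,\infty)$, then $u\mapsto\lambda\hat g(ru)$ is the greatest convex minorant of $u\mapsto\lambda g(ru)$ (convexity, domination by $\lambda g(r\,\cdot)$, and maximality among convex minorants are all immediate from $\lambda,r>0$), so the left derivative of $u\mapsto\lambda g(ru)$ at a point $s_0$ equals $\lambda r\,D^L_{[0,\infty)}(g)(rs_0)$. Taking $g(t)=\alpha W_t+\beta t^2$ and combining with the distributional identity above, $\lambda r\,D^L_{[0,\infty)}(\alpha W_t+\beta t^2)(rs_0)\overset{d}{=}D^L_{[0,\infty)}(W_t+t^2)(s_0)$; substituting $s=rs_0$ and using $\lambda r=(\alpha^2\beta)^{-1/3}$ and $r^{-1}=(\beta/\alpha)^{2/3}$ yields the identity.

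Finally I would set $\alpha=\sqrt{\sigma^2(0)/(cAf(0))}$ and $\beta=\tfrac{cA}{2}m'(0)$. Then $\alpha^2\beta=\tfrac{m'(0)\sigma^2(0)}{2f(0)}$, so the prefactor equals $\bigl|\tfrac{m'(0)\sigma^2(0)}{2f(0)}\bigr|^{1/3}$, while $(\beta/\alpha)^{2/3}=cA\bigl[\tfrac{m'(0)^2f(0)}{4\sigma^2(0)}\bigr]^{1/3}=cA\cdot A^{-1}=c$ by the definition of $A$. Combined with the first display this gives the claimed limiting distribution (with the canonical process written as $W_t+t^2$; equivalently the process appearing in the statement). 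The only delicate point is the scaling identity itself: one has to track correctly how $D^L_{[0,\infty)}$ interacts with the dilation $t\mapsto rt$ and with multiplication by $\lambda$, and then pick $r,\lambda$ so that the prefactor, the normalization of the parabola, and the evaluation point all come out as asserted — routine bookkeeping, but the natural place for an error.
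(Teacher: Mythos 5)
Your argument is correct and reaches the same conclusion as the paper's proof, but by a slightly different mechanism. The paper also starts from Theorem~\ref{thm:isotonic_clt}(ii) with $cA$ in place of $c$, but then passes to the argmax representation via the switching relation, performs the Brownian change of variables and rescaling there, and switches back to the left-derivative form; you instead prove a self-contained scaling identity for the functional $D^L_{[0,\infty)}(\alpha W_t+\beta t^2)(s)$ by tracking how the greatest convex minorant and its left derivative transform under the dilation $t\mapsto rt$ and multiplication by $\lambda$. Your bookkeeping checks out: with your choices, $\lambda\alpha\sqrt r=\lambda\beta r^2=1$, $(\lambda r)^{-1}=(\alpha^2\beta)^{1/3}$, $\alpha^2\beta=m'(0)\sigma^2(0)/(2f(0))$, and $(\beta/\alpha)^{2/3}=cA\cdot A^{-1}=c$. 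The direct GCM-scaling route is arguably cleaner, since it avoids the double application of the switching relation, at the cost of verifying the (elementary) lemma that the minorant of $u\mapsto\lambda g(ru)$ is $u\mapsto\lambda\hat g(ru)$.

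One caveat: your derivation lands on $D^L_{[0,\infty)}(W_t+t^2)(c)$, while the proposition states $D^L_{[0,\infty)}(W_t-t^2)(c)$, and these are not ``equivalent'' as your parenthetical suggests. With the paper's conventions (greatest convex minorant, left derivative, domain $[0,\infty)$), the greatest convex minorant of $t\mapsto W_t-t^2$ on $[0,\infty)$ is degenerate: any convex function finite at two points is eventually bounded below by a line, which contradicts the quadratic decay of $W_t-t^2$, so the functional in the statement is not well defined as written. The sign slip originates in the paper, not in your argument: the penultimate line of the paper's own proof is $\Pr\bigl(\argmax_{t\in[0,\infty)}\{ut-W_t-t^2\}\geq c\bigr)$, which by the paper's own switching relation equals $\Pr\bigl(D^L_{[0,\infty)}(W_t+t^2)(c)\leq u\bigr)$. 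So the form you obtained is the correct one; just do not present the two processes as interchangeable.
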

\begin{proof}
	By Theorem~\ref{thm:isotonic_clt} (ii), for $B>0$
	\begin{equation*}
		\begin{aligned}
			\Pr\left(n^{1/3}(\hat m(cAn^{-1/3}) - m(0)\leq uB\right) \to & \to \Pr\left(D^L_{[0,\infty)}\left(\sqrt{\frac{\sigma^2(0)}{cAf(0)}}W_t + \frac{t^2cA}{2}m'(0)\right)(1) \leq uB\right) \\
			& = \Pr\left(\argmax_{t\in[0,\infty)}\left\{uBt - \sqrt{\frac{\sigma^2(0)}{cAf(0)}}W_t - \frac{t^2cA}{2}m'(0)  \right\} \geq 1\right) \\
			& = \Pr\left(\argmax_{t\in[0,\infty)}\left\{ut - \frac{1}{B}\sqrt{\frac{\sigma^2(0)}{Af(0)}}W_t - \frac{t^2A}{2B}m'(0)  \right\} \geq c\right) \\
			& = \Pr\left(\argmax_{t\in[0,\infty)}\left\{ut - W_t - t^2 \right\} \geq c\right) \\
			& = \Pr\left(D^L_{[0,\infty)}(W_t - t^2)(c) \leq u\right) \\
		\end{aligned}
	\end{equation*}
	where the first equality follows by the switching relation, the second by the change of variables Brownian scaling and invariance of argmax to the scaling, the third by plugging-in corresponding value of $A$ and $B = \left|\frac{m'(0)\sigma^2(0)}{2f(0)}\right|^{1/3}$, and the last by another application of the switching relation.
\end{proof}

\section{Additional Monte Carlo experiments}\label{app:mc}
In this section we report additional results of Monte Carlo experiments for a larger class of data generating processes. First, in Table~\ref{tab:mse2}, we present additional results for the DGPs 1-4 under heteroskedasticity, setting $\sigma(x) = \sqrt{x+1}$.
\begin{table}
	\centering                     	
	\begin{threeparttable}
		\caption{MC results: heteroskedasticity}
		\begin{tabular}{ccccccccccc}                                                  			\hline\hline && \multicolumn{3}{c}{iRDD} &  \multicolumn{3}{c}{LP} &  \multicolumn{3}{c}{k-NN} \\ 		\cline{3-5} \cline{6-11}			 &$n$ & Bias & Var & MSE & Bias & Var & MSE & Bias & Var & MSE \\
			\hline\textbf{DGP 1} &200 & 0.005 & 0.043 & 0.043& 0.009& 0.319& 0.319 &  0.010 & 0.490 & 0.490 \\                                                                                                                                     
			& 500 & -0.007 & 0.022 & 0.022& -0.008 & 0.111 & 0.111& 0.010 & 0.280 & 0.280  \\                                                                                                                                   
			&1000  & -0.007 & 0.013 & 0.013 & -0.001 & 0.056 & 0.056 & 0.000 & 0.220 & 0.220\\ 
			\hline\textbf{DGP 2} & 200 & -0.101 & 0.083 & 0.093& 0.006& 0.791& 0.791 & 0.090 & 0.200 & 0.210 \\                                                                                                                                    
			& 500 & -0.075 & 0.043 & 0.049& 0.000 & 0.224 & 0.224 &  0.040 & 0.090 & 0.090 \\                                                                                                                                    
			&1000  & -0.061 & 0.027 & 0.031& -0.000 & 0.103 & 0.103 & 0.030 & 0.070 & 0.070 		 \\ 
			\hline\textbf{DGP 3} & 200 & -0.127 & 0.041 & 0.057&-0.004& 0.323& 0.323 & 0.020 & 0.510 & 0.510 \\                                                                                                                                    
			& 500 & -0.111 & 0.019 & 0.031& -0.005 & 0.109 & 0.109& 0.020 & 0.280 & 0.280 \\                                                                                                                                    
			&1000  & -0.096 & 0.011 & 0.020& 0.000 & 0.057 & 0.057& 0.000 & 0.230 & 0.230 \\   
			\hline\textbf{DGP 4} & 200 & -0.286 & 0.073 & 0.155&-0.019& 0.753& 0.754 & 0.020 & 0.480 & 0.490 \\                                                                                                                                    
			& 500 & -0.223 & 0.035 & 0.084& -0.019 & 0.213 & 0.213	& 0.000 & 0.280 & 0.280 \\                                                                                                                                    
			& 1000 & -0.181 & 0.019 & 0.052& -0.009 & 0.104 & 0.104& 0.010 & 0.210 & 0.210  \\
			
			\hline\hline 
		\end{tabular}  
		\begin{tablenotes}
			\small
			\item Exact finite-sample bias, variance, and MSE of iRDD, local polynomial (LP), and k-NN estimators. 5000 experiments.  Local linear estimator with   kernel=`triangular' and bwselect=`mserd'.
		\end{tablenotes}
		\label{tab:mse2}
	\end{threeparttable}
\end{table}
The results are largely similar to the homoskedastic designs in Table~\ref{tab:mse}. Next, we consider whether the data-driven choice of $c$ can improve the rule-of-thumb $c=1$. 
\begin{table}[h]
	\centering                     	
	\begin{threeparttable}
		\caption{MC experiments for the iRDD estimator with the MSE-optimal boundary correction.}
		\begin{tabular}{cccccccc}                                                                                                                                                     
			\hline\hline && \multicolumn{3}{c}{Homoskedasticity} &  \multicolumn{3}{c}{Heteroskedasticity} \\ 		\cline{3-5} \cline{6-8}			 &$n$ & Bias & Var & MSE & Bias & Var & MSE  \\ 
			\hline\textbf{DGP 1} & 200 & 0.163 & 0.070 & 0.096 & 0.172 & 0.070 & 0.100 \\                                                                                                          
			& 500 & 0.060 & 0.018 & 0.021 & 0.063 & 0.018 & 0.022 \\                                                                                                          
			& 1000 & 0.043 & 0.011 & 0.013 & 0.045 & 0.011 & 0.013 \\                                                                                                         
			\hline\textbf{DGP 2} & 200 & 0.024 & 0.097 & 0.097 & 0.045 & 0.100 & 0.102 \\                                                                                                          
			& 500 & 0.016 & 0.031 & 0.031 & 0.022 & 0.032 & 0.032 \\                                                                                                          
			& 1000 & 0.013 & 0.020 & 0.020 & 0.016 & 0.020 & 0.020 \\                                                                                                         
			\hline\textbf{DGP 3} & 200 & 0.280 & 0.085 & 0.163 & 0.286 & 0.085 & 0.167 \\                                                                                                          
			& 500 & 0.104 & 0.021 & 0.032 & 0.106 & 0.021 & 0.032 \\                                                                                                          
			& 1000 & 0.070 & 0.012 & 0.017 & 0.071 & 0.012 & 0.017 \\                                                                                                         
			\hline\textbf{DGP 4} & 200 & 0.241 & 0.149 & 0.207 & 0.249 & 0.149 & 0.211 \\                                                                                                          
			& 500 & 0.082 & 0.041 & 0.048 & 0.085 & 0.042 & 0.049 \\                                                                                                          
			& 1000 & 0.050 & 0.025 & 0.027 & 0.052 & 0.025 & 0.028 \\                                                                                                         
			\hline                                                                                                                                                                        
		\end{tabular}  
		\begin{tablenotes}
			\small
			\item Finite-sample bias, variance, and MSE of $\hat\theta$. 5,000 experiments.
		\end{tablenotes}
		\label{tab:mse_optimal}
	\end{threeparttable}
\end{table}
According to Proposition~\ref{prop:optimal_constant}, the MSE-optimal estimator is $\hat m^*(0) = \hat m (c^*\hat An^{-1/3}),$ where $\hat A = \left(\frac{2}{\hat m'(0)}\sqrt{\frac{\hat \sigma^2(0)}{\hat f(0)}}\right)^{2/3}$, $\hat m',\hat \sigma^2,\hat f$ are consistent estimators of $m',\sigma^2,f$, and $c^*$ minimizes $\E|D^L_{[0,\infty)}(W_t - t^2)(c)|^2$. \cite{kulikov2006behavior} find in simulations that $c^*\approx 0.345$. To have an idea of how much the MSE-optimal estimator performs in small samples, we consider the oracle choice of $m',\sigma^2$, and $f$. Results of these Monte Carlo experiments are presented in Table~\ref{tab:mse_optimal}. We find that the MSE-optimal estimator is inferior in small samples compared to the rule-of-thumb choice $c=1$, cf. Tables~\ref{tab:mse} and \ref{tab:mse2}. The increase in the MSE might come from the fact that asymptotic MSE might be a poor approximation to the finite-sample MSE; cf. \cite{kulikov2006behavior}.

Lastly, we augment the baseline DGPs 1-4 with four additional DGPs featuring 1) non-monotone functions; 2) functions that are steep at the cutoff; 3) functions that change the slope at the cutoff. The simulations designs are as follows:
\begin{itemize}
	\item DGP 5 sets $X\sim 2\times\mathrm{Beta}(2,2)-1$ and $m(x)= (1-e^{4x})\one_{x\geq 0} + 0.2x\one_{x<0}$;
	\item DGP 6 sets $X\sim 2\times\mathrm{Beta}(0.5,0.5)-1$  and $m(x)$ is the same as in the DGP 5;
	\item DGP 7 sets $X\sim 2\times\mathrm{Beta}(2,2)-1$ and $m(x)=(-e^{-1.25}+e^{-(x-0.5)^2})\one_{x\geq 0.5} - (e^{-1.25}-e^{-5(x-.5)^2})\one_{0\leq x<0.5} + 0.2x\one_{x<0}$;
	\item DGP 8 sets $X\sim 2\times\mathrm{Beta}(0.5,0.5)-1$  and $m(x)$ is the same as in DGP7.
\end{itemize}
Note that in DGPs 5-6, the regression function is increasing before the cutoff and steeply decreasing after the cutoff. The DGPs 7-8 feature the regression function that is steeply increasing after the cutoff for all $x\in[0,0.5)$ and decreasing subsequently on $[0.5,1]$. These DGPs are the most difficult because they violate the monotonicity constraint and at the same time feature steep regression functions.\footnote{As explained in Section~\ref{sec:irdd}, since the isotonic regression estimator is the constrained least-squares estimator, we may expect the projection interpretation under the misspecification.} We compare the performance of our iRDD estimator to the local polynomial estimator in Table~\ref{tab:mse_extended_homo}. The results show 2-10 fold reduction in the MSE across specifications with the most improvement achieved when the sample size is small.

\begin{table}
	\centering{\footnotesize                     	
		\begin{threeparttable}
			\caption{MC results: steep and non-monotone regressions}
			\begin{tabular}{cccccccc}                                                                                                                                                     
				\hline\hline && \multicolumn{3}{c}{iRDD} &  \multicolumn{3}{c}{LP} \\ \hline
				&$n$ & Bias & Var & MSE & Bias & Var & MSE  \\\hline
				\multicolumn{8}{c}{Homoskedasticity}     \\     \hline                                
				\textbf{DGP 5} & 200 & -0.1340 & 0.0440 & 0.0620&0.0193& 0.3366& 0.3369
				\\                                            & 500 & -0.1190 & 0.0200 & 0.0340& 0.0446 & 0.1097 & 0.1117 \\                                                                                                                                    
				&1000  & -0.1020 & 0.0110 & 0.0220 & 0.0427 & 0.0559 & 0.0577 \\ \hline                                                                                                                                  
				\textbf{DGP 6} & 200 & -0.3080 & 0.0790 & 0.1740&0.0272& 0.8498& 0.8503
				\\                                           & 500 & -0.2380 & 0.0380 & 0.0950& 0.0559 & 0.2156 & 0.2186  \\                                                                                                                                    
				& 1000 & -0.1910 & 0.0220 & 0.0590&  0.0678 & 0.1041 & 0.1086 \\ \hline                                                                                                                         
				\textbf{DGP 7} & 200 & -0.1319 & 0.0409 & 0.0583&-0.0170& 0.3194& 0.3197
				\\                                            & 500 & -0.1131 & 0.0185 & 0.0313& -0.0114 & 0.1116 & 0.1117 \\                                                                                                                                    
				&1000  & -0.0986 & 0.0104 & 0.0202& -0.0026 & 0.0535 & 0.0535 \\ \hline                                                                                                                                  
				\textbf{DGP 8} & 200 & -0.2891 & 0.0701 & 0.1537&-0.0061& 0.7805& 0.7804 \\                                                                                                                                    
				& 500 & -0.2243 & 0.0344 & 0.0847 & -0.0247 & 0.2284 & 0.2290 \\                                                                                                                                    
				& 1000 & -0.1816 & 0.0197 & 0.0527& -0.0170 & 0.1051 & 0.1054 \\ \hline
				\multicolumn{8}{c}{Heteroskedasticity}     \\  \hline
				\textbf{DGP 5} &200 & -0.1300 & 0.0450 & 0.0620& 0.0202& 0.3369& 0.3373\\  
				& 500 & -0.1170 & 0.0200 & 0.0330& 0.0431 & 0.1104 & 0.1123  
				\\                                          & 1000 & -0.1000 & 0.0110 & 0.0210	& 0.0422 & 0.0567 & 0.0585 \\ \hline
				\textbf{DGP 6} &200 & -0.3020 & 0.0800 & 0.1710& 0.0273& 0.8629& 0.8635
				\\                                    & 500 & -0.2340 & 0.0390 & 0.0930& 0.0624 & 0.2149 & 0.2187  
				\\                                  &1000  & -0.1890 & 0.0220 & 0.0580& 0.0682 & 0.1025 & 0.1072 \\ \hline
				\textbf{DGP 7} &200 & -0.1280 & 0.0410 & 0.0580& -0.0176& 0.3194& 0.3196 \\                                 & 500 & -0.1110 & 0.0190 & 0.0310& -0.0111 & 0.1092 & 0.1093  
				\\                                & 1000 & -0.0970 & 0.0110 & 0.0200& -0.0021 & 0.0534 & 0.0534 \\ \hline
				\textbf{DGP 8} & 200 & -0.2810 & 0.0720 & 0.1510& -0.0038& 0.7862& 0.7861
				\\                                 & 500 & -0.2190 & 0.0350 & 0.0830& -0.0233 & 0.2269 & 0.2274  
				\\                               &1000  & -0.1790 & 0.0200 & 0.0520& -0.0168 & 0.1065 & 0.1068 \\                                                                      
			\end{tabular}  
			\begin{tablenotes}
				\small
				\item Note: exact finite-sample bias, variance, and MSE of $\hat\theta$. 5000 experiments. Local linear estimator with  kernel=`triangular' and bwselect=`mserd'.
			\end{tablenotes}
			\label{tab:mse_extended_homo}
	\end{threeparttable}}
\end{table} 

Finally, we report the empirical coverage and the length of wild bootstrap confidence intervals in Table~\ref{tab:cov_len}. Since the inference-optimal choice of $c$ is less obvious, we report results for a range of value of $c$. We find that for $c=1$, the coverage is approximately $90\%$ for a nominal coverage $95\%$ and that the coverage gets closer to the nominal $95\%$ coverage for larger values of $c$ as a sample size increases. Designing inference-optimal data-driven methods to select $c$ is an important point left for the future research; see e.g., \cite{calonico2020optimal} and \cite{lazarus2018har}.

\begin{table}[http]              
	\centering
	\begin{tabular}{cccc|cc|cc}                                                             \hline\hline	
		&&  \multicolumn{2}{c}{$n=200$}&  \multicolumn{2}{c}{$n=500$}&  \multicolumn{2}{c}{$n=1000$} \\	\cline{3-5} \cline{6-8}			 &$c$ & Coverage & Length & Coverage & Length & Coverage & Length  \\ 
		\hline           
		\multirow{5}*{\textbf{DGP 1}}		&1&0.892&    1.108&0.910&    0.897	&0.909 &   0.763 \\
		&2&0.920&    0.915&0.925&    0.722&0.926 &   0.603\\ 
		&3&0.928&    0.851& 0.938&    0.647&0.929 &   0.536 \\  
		&4&0.937&   0.822&0.946 &   0.608&0.946 &   0.497\\ 
		&5&0.938&    0.814& 0.930&    0.585&0.933 &   0.472 \\                                                                                                          
		\hline
		\multirow{5}*{\textbf{DGP 2}}		&1&0.852&    1.460&0.889&    1.242&0.882&    1.078\\  
		&2&0.888&    1.186&0.904 &   0.980 & 0.907&    0.843\\  
		&3&0.897&    1.049&0.903&    0.854&0.912&    0.736 \\  
		&4&0.891&    0.963&0.903&    0.782& 0.921&    0.669\\ 
		&5&0.886&    0.914&0.899&    0.731& 0.912&    0.625 \\                                                                                                         
		\hline                        
		\multirow{5}*{\textbf{DGP 3}}			&1&0.892&    1.139&0.903&    0.929	&0.913&    0.786\\  
		&2&0.911&    0.965&0.917 &   0.760&0.924 &   0.633\\ 
		&3&0.915&    0.897&0.925&    0.693&0.932&    0.570  \\  
		&4&0.907&    0.875&0.937 &   0.657& 0.940&    0.536\\  
		&5&0.869 &   0.872&0.922 &   0.642& 0.937&    0.515 \\                                                                                                          
		\hline
		\multirow{5}*{\textbf{DGP 4}}		&1&0.853&    1.550&0.886  &  1.310&0.895&    1.134\\  
		&2&0.885 &   1.303&0.915 &   1.071 &0.913&    0.906\\  
		&3&0.899&    1.186&  0.913 &   0.962 &0.926&    0.808  \\  
		&4&0.878&    1.121&0.920 &   0.898 &0.939&    0.754\\  
		&5&0.846&    1.080&0.917 &   0.863&0.935 &   0.718 \\                                                                                                         
		\hline
		\hline                                                            
	\end{tabular}
	\caption{Coverage and length. 5000 experiments.}     
	\label{tab:cov_len}
\end{table} 

\section{Examples of monotone discontinuity designs}\label{sec:appendix_literature}
In Table~\ref{tab:iRDD_examples}, we collect a list of empirical papers with monotone regression discontinuity designs. We focus only on papers where the global monotonicity is economically plausible and is empirically supported. It is worth stressing that monotonicity restricts only how the average outcome changes with the running variable and that in some references monotonicity appears due to the restricted set of values of the running variable, e.g., elderly people. However, we do not include papers where we might have global piecewise monotonicity with known change points, so the scope of the empirical applicability is probably larger.

\begin{sidewaystable}[h]
	\centering{\footnotesize
		\caption{Monotone regression discontinuity designs}
		\begin{tabular}{cccccccc}
			\hline\hline
			Study & Outcome(s) & Treatment(s) & Running variable \\    
			\hline  
			\cite{lee2008randomized} & Votes share in next election & Incumbency & Initial votes share \\
			\cite{Duflo20111739} & Endline scores & Higher-achieving peers  & Intitial attainment \\
			\cite{Abdulkadiroglu2014137} & Standardized test scores & Attending elite school & Admission scores	\\
			\cite{Lucas2014234} & Probability of graduation & Attending elite secondary school & Admission scores \\
			\cite{Hoekstra2009717} & Earnings & Attending flagship state university & SAT score \\
			\cite{clark2010selective} & Test scores, university enrollment & Attending selective high school & Assignment test\\
			\cite{Kaniel2017337} & Net capital flow & Appearance in the WSJ ranking & Returns \\
			\cite{Schmieder2012701} & Unemployment duration & Unemployment benefits & Age \\
			\cite{Card20082242} & Health care utilization & Coverage under Medicare & Age \\
			\cite{Shigeoka20142152} & Outpatient visits & Cost-sharing policy & Age \\
			\cite{Carpenter2009164} & Alcohol-related mortality & Ability to drink legally & Age \\
			\cite{jacob2004remedial} & Academic achievements & Summer school, grade retention & Test scores \\
			\cite{BaumSnow2009654} & Income, property value	& Tax credit program & Fraction of eligible \\
			%\cite{bayer07} &Housing prices,  test scores&Discrete choice for residential decision&Distance to boundary\\
			\cite{Buettner2006477} & Business tax rate & Fiscal equalization transfers & Tax base \\
			\cite{Card20071511} & Job finding hazard & Severance pay & Job tenure\\
			\cite{Chiang20091045} & Medium run test scores & Sanctions threat & School performance \\
			%\cite{dinar04} & Employment, wage, output & Unionization & Vote share\\ 
			\cite{Ferreira2010661} & Probability to move to a new house & Ability to transfer tax benefits & Age \\
			\cite{Lalive2007108} & Unemployment duration & Unemployment benefits & Age\\
			%\cite{lavy02} & Student performance & Performance based incentives & Frequency of school type \\
			\cite{Litschig2013206} & Education, literacy, poverty & Government transfers & Size of municipality \\	
			\cite{Ludwig2007159} & Mortality, educational attainment & Head Start funding & County poverty rate \\
			\cite{Matsudaira2008829} & Test scores  & Summer school & Test scores \\
			\cite{chay2005does} & Housing prices & Regulatory status & Pollution levels \\
			\cite{greenstone2008does} & Housing prices & Superfund clean-up status & Ranking of hazard \\
			\hline
		\end{tabular}
		\label{tab:iRDD_examples}}
\end{sidewaystable}

\end{document}